\numberwithin{equation}{section}
\DeclareMathOperator\Hess{\mathrm{Hess}}
\newcommand{\cE}{\mathcal{E}}
\newcommand{\cM}{\mathcal{M}}
\newcommand{\cF}{\mathcal{F}}
\newtheorem{theorem}{Theorem}
\newtheorem{corollary}{Corollary}
\newtheorem{lemma}{Lemma}
\newtheorem{proposition}{Proposition}
\theoremstyle{definition} 
\newtheorem{example}{Example}
\newtheorem{remark}{Remark}
\title[]{Electrostatics and geodesics on $K3$ surfaces} 
\author{Gon\c{c}alo Oliveira}
\address{Departamento de Matem\'atica, Intituto Superior T\'ecnico, Lisboa, Portugal; and IST Austria, Klosterneuburg, Austria}
\urladdr{\href{https://sites.google.com/view/goncalo-oliveira-math-webpage/home}{sites.google.com/view/goncalo-oliveira-math-webpage/home}}
\email{\href{mailto:galato97@gmail.com}{galato97@gmail.com}}
\date{\today}
\keywords{Closed geodesics, K3 surfaces, Ricci-flat K\"ahler metrics}
\subjclass[2010]{53C22,58E10,53C25,53C26}
\begin{document}

	\begin{abstract}
		Motivated by some conjectures originating in the Physics literature, we use Foscolo's construction of Ricci-flat K\"ahler metrics on K3 surfaces to locate, with high precision, several closed geodesics and compute their index (their length is also approximately known).
		
		Interestingly, the construction of these geodesics is related to an open problem in electrostatics posed by Maxwell in 1873 \cite{Maxwell}. Our construction is also of interest to modern Physicists working on (supersymmetric) non-linear sigma models with target space such a K3 surface \cite{Douglas}.
	\end{abstract}
	
	\maketitle
	
	\tableofcontents

	\section{Introduction}
	
	\subsection{Context}
	
	The problem of constructing closed geodesics on Riemannian manifolds has a long history which dates back at least to Jacobi's study of geodesics on ellipsoids \cite{Jacobi} almost two centuries ago. In the twentieth century, the attempts at constructing and understanding closed geodesics on more general Riemannian manifolds led to the development of several dynamical and analytic techniques respectively pioneered by Poincar\'e, Birkhoff \cite{Poincare,Birkhoff}, and Morse, Lusternik-Schnirelmann \cite{Morse0,LuS}. The later of these techniques consisted in constructing a Morse theoretic framework for the energy functional in the free loop space. Together with new developments in topology \cite{VS,Gromoll}, this eventually led to the proof that any closed Riemannian manifold whose homology group is not a truncated polynomial, admits infinitely many closed geodesics. The proof of this result relies in showing that under such circumstances the homology of the free loop space is infinitely generated (with generators in infinitely large degree). Then, by using a suitable version of Morse theory one concludes that there are infinitely many closed geodesics and that their index grows with the degree of the respective homology class. For a proof of this and other results, the author recommends the book \cite{Klingenberg} and references therein.
	
	Despite such tremendous progress, questions such as explicitly constructing (or locating) geodesics on a specific Riemannian manifold, and determining their length and index, are extremely hard and require very good knowledge of the metric in order to be tackled. Indeed, finding closed geodesics involves solving a system of second order ordinary differential equations and establish the existence of periodic solutions, a problem which is extremely hard, especially if the metric is not known in explicit form. Furthermore, in a way, the study of geodesics on a Riemannian manifold can also be considered an indirect way of studying the underlying geometry.
	
	Specific examples of Riemannian manifolds for which it would be interesting to know more about their closed geodesics are for instance Ricci-flat K\"ahler metrics on K3 surfaces or more general Calabi--Yau manifolds. Much of this interest comes from the need to better understand the geometry of such metrics and from their relevance in Physics. Indeed, according to \cite{Douglas}, closed geodesics should play a particularly important role in the (supersymmetric) non-linear sigma model with target space such a Calabi--Yau manifold. However, tackling such questions seems extremely difficult and out of reach as no Ricci-flat K\"ahler metrics on compact Calabi--Yau manifolds is known in closed form.\footnote{See however \cite{TZ0,TZ1} for some very promising and interesting development in this direction. To our best understanding, the authors of these references find a formal expansion which converges to Ricci-flat K\"ahler metrics on K3 surfaces.} Nevertheless, in certain limiting regimes where the metric either degenerates to an orbifold metric or collapses, it is possible to obtain good approximations to such metrics \cite{LS,Donaldson,GW,Foscolo1,Hein,SZ}. 
	This gives a glimpse of hope in order to search for closed geodesics on such degenerating K3 surfaces. A hope which we explore in this article.

	\subsection{Summary}
	
	Motivated by the discussion above and the proposals of \cite{Douglas} regarding the importance of closed geodesics on Calabi--Yau manifolds for the supersymmetric sigma-model. In this article we consider Ricci-flat K\"ahler metrics nearly collapsing to a quotient of a $3$-torus $\mathbb{T}_\Lambda$ by $\mathbb{Z}_2$. These metrics have been constructed by Lorenzo Foscolo in \cite{Foscolo1} and can be well approximated using the Gibbons--Hawking ansatz. 
	
	Away from the finite set of points at which the curvature concentrates, we use this approximation to investigate which collapsing circles approximate closed geodesics. The location of these circles above the quotient $\mathbb{T}_\Lambda/\mathbb{Z}_2$ is related to a classical problem in electrostatics first posed by Maxwell in his ``Treatise on Electricity and Magnetism'' \cite{Maxwell}. Consider a configuration of point charges symmetric under the $\mathbb{Z}_2$-action and containing its fixed points. If the total charge of this configuration vanishes (a  necessary condition in order to place them in a compact manifold such as $\mathbb{T}_\Lambda$), they generate an electric field $E$ on $\mathbb{T}_\Lambda/\mathbb{Z}_2$, whose electrostatic points, i.e. those points at which $E$ vanishes, correspond to approximate geodesics circles for the collapsing metrics on the K3 surfaces under consideration. 
	
	The other regime we can consider is what happens at the finite set of points at which the curvature concentrates. It turns out that the local models for the (rescaled) metrics at these points are ALF gravitational instantons to which we can, at times, apply a similar relation to electrostatics to locate closed geodesics. 
	
	In this manner, for such collapsing Ricci-flat K\"ahler metrics on K3 surfaces, we are able to locate several closed geodesics and compute their index.

	\subsection{Main result}
	
	
	Before stating any concrete result we must start with a brief description of Foscolo's Ricci-flat K\"ahler metrics on the K3 surface, simply referred to as Foscolo's K3 surfaces in this article. As Riemannian manifolds these form an open set in the moduli space of hyperK\"ahler metrics on the differentiable manifold underlying K3 and so are parameterized by $58$ parameters. One of these, denoted $\epsilon \ll 1$, measures how quickly a circle fibration in a large portion of $X$ is collapsing. Namely, it approximately corresponds to the length of the circle fibers. The resulting Riemannian manifold $X_\epsilon=(X, g_\epsilon^{hk})$ can be decomposed as
	\begin{equation}\label{eq:decomposition of X epsilon INTRO}
		X_\epsilon = K_\epsilon \cup  \bigcup_{j=1}^8 M^j_\epsilon \cup \bigcup_{i=1}^n N^i_\epsilon.
	\end{equation} 
	The \emph{building blocks} in this decomposition consist of the collapsing region $K_\epsilon$, and the curvature concentrated region $\bigcup_{j=1}^8 M^j_\epsilon \cup \bigcup_{i=1}^n N^i_\epsilon$. They have the following list of properties:
		\begin{enumerate}
		\item $K_\epsilon$ is a large open set which is a circle fibration over the quotient of a punctured torus $\mathbb{T}_\Lambda$ by $\mathbb{Z}_2$, with the circle fibers having size approximately $\epsilon$. The punctures in the torus $\mathbb{T}_\Lambda = \mathbb{R}^3/ \Lambda$ must split in two sets $\lbrace q_1 , \ldots , q_8\rbrace$ which correspond to the points with $2q_i \in \Lambda$ and $\lbrace p_1 , \ldots , p_n , -p_1, \ldots , -p_n \rbrace$ the first $n$ of which can be chosen freely and are parameters of the construction. Furthermore, we must choose some extra numbers $m_1, \ldots , m_8 \in \mathbb{N}_0$ and $k_1, \ldots , k_n \in \mathbb{N}$ which together with $n$ must satisfy the balancing condition
		\begin{equation}\label{eq:necessary condition for Laplace equation INTRO}
			\sum_{i=1}^n k_i + \sum_{j=1}^8 m_j =16 .
		\end{equation}
		This is a necessary (and sufficient) condition in order to construct a harmonic function
		$$h:\mathbb{T}_\Lambda \to \mathbb{R}\cup \lbrace - \infty \rbrace \cup \lbrace + \infty \rbrace,$$ 
		which corresponds to the electric potential generated by charges of magnitude $k_i$ placed at the points $p_i$ and $-p_i$ for $i=1, \ldots , n$, and charges of magnitude $2m_j-4$ at the points $q_j$ for $j = 1 , \ldots , 8$. Using this function, the Gibbons--Hawking anstaz determines a metric $g_\epsilon^{gh}$ which we can use to approximate the hyperK\"ahler metric $g_\epsilon^{hk}$ in the K3 surface restricted to this region. We shall denote the open Riemannian manifold determined by the same smooth manifold as $K_\epsilon$, but with the metric $g_\epsilon^{gh}$, by $K_\epsilon^{gh}$.
		
		\item For each $j=1, \ldots , 8$, the rescaled metrics $\epsilon^{-2}g_\epsilon^{hk}$ restricted to $M_\epsilon^j$ smoothly converge to an initially fixed ALF gravitational instanton $M^j$ of type $D_{m_j}$.
		
		\item For each $i=1, \ldots , n$, the rescaled metrics $\epsilon^{-2}g_\epsilon^{hk}$ restricted to $N_\epsilon^i$ smoothly converge to an initially fixed ALF gravitational instanton $N^i$ of type $A_{k_i-1}$. The metric on $N^i$ can be described explicitly using the Gibbons--Hawking anstaz and is determined by a function 
		$$\phi_i: \mathbb{R}^3 \to \mathbb{R}^+ \cup \lbrace \infty \rbrace$$ 
		corresponding to the electric potential generated by $k_i$ unit charges in $\mathbb{R}^3$ whose location $ x^{i}_1 , \ldots , x^{i}_{k_i} $ determines the metric on $N^i$. Away from a finite set of points, $N_i$ consists of a circle fibration over $ \mathbb{R}^3 \backslash \lbrace x^{i}_1 , \ldots , x^{i}_{k_i} \rbrace$ with the circle fibers having length $2 \pi \phi_i^{-1/2}$.
	\end{enumerate}

	Theorem \ref{thm:Main} below, is a more informal version of \ref{thm:Main extended} later in text and serves as a sample of the results we are able to obtain. We state it here in full generality, which weakens some of the conclusions we are able to make when dealing with more specific configurations of points. In order to illustrate those stronger results, later in this introduction we shall examine two concrete examples on which we can do better than the general result.
	
	\begin{theorem}\label{thm:Main}
%
		Let $n \geq 1$, $k_1, \ldots , k_n, m_1, \ldots , m_8$ satisfying \ref{eq:necessary condition for Laplace equation INTRO}, a collection of ``bubbles'' $N^1, \ldots , N^n$, $M^1, \ldots , M^8$, and $p_1, \ldots , p_n \in \mathbb{T}_\Lambda$ with $p_1$ generically chosen. Then, there is a positive number $\epsilon_0 \ll 1$, such that for all $\epsilon < \epsilon_0$: 
		\begin{itemize}
			\item There are at least $n+1$ closed geodesics of index one and $5$ of index two in $K_\epsilon$.
			
			\item For $i=1, \ldots , n$ and if the location of $x^{i}_1$ in the ``bubble'' $N^i$ is generic, there are at least $k_i-1$ closed geodesics on $N_\epsilon^i$. In addition, if $\min_{j_1 \neq j_2}|x^{i}_{j_1} - x^{i}_{j_2}|$ is sufficiently large, these geodesics have index $1$. 
		\end{itemize}
		These geodesics can be located within a precision of $o(\epsilon)$, in the Hausdorff distance, within the circle fibers above the critical points of the corresponding electric potentials.
	\end{theorem}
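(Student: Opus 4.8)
The plan is to pass to the model metrics $g_\epsilon^{gh}$ and the ALF instantons $N^i,M^j$, on which closed geodesics can be located and indexed explicitly, and then to transfer the conclusion to the genuine hyperK\"ahler metric $g_\epsilon^{hk}$ by a perturbation argument. The starting observation is a \emph{fiber--geodesic principle}: in any Gibbons--Hawking metric $g = V\,\delta_{ab}\,dx^a dx^b + V^{-1}\theta^2$ the vertical Killing field $K=\partial_\tau$ generating the circle action has $|K|^2 = V^{-1}$, so the Killing identity $\nabla_K K = -\tfrac12\nabla|K|^2$ gives $\nabla_K K = \tfrac12 V^{-2}\nabla V$. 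Since $|K|$ is constant along a fiber, each fiber is parametrized proportionally to arclength, and it is therefore a geodesic if and only if it lies over a critical point of the potential, i.e.\ over an electrostatic point of $E=-\nabla V$. Applying this with the relevant potential ($h$ on $K_\epsilon^{gh}$, $\phi_i$ on $N^i$) identifies the candidate geodesics with the circle fibers above $\{\nabla h=0\}\subset \bbT_\Lambda/\bbZ_2$ and above $\{\nabla\phi_i=0\}\subset\bbR^3$.

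Next I would compute the Morse index of each such fiber geodesic from the second variation of energy, exploiting the collapse $\epsilon\to 0$. In the adiabatic limit the normal Jacobi operator along the short fiber is governed by the base Hessian of the fiber-length function $\ell = 2\pi V^{-1/2}$, the $S^1$ of reparametrizations providing a clean family of zero modes, so that the index of the geodesic equals the Morse index of $\ell$, namely the number of \emph{positive} eigenvalues of $\Hess V$. Because $V$ is harmonic, $\tr\Hess V=0$ at a nondegenerate critical point, forcing its Morse index to be $1$ or $2$, and hence
\[
\text{geodesic index} \;=\; 3-\big(\text{Morse index of }V\big).
\]
Thus Morse-index-$2$ critical points yield index-$1$ geodesics and Morse-index-$1$ critical points yield index-$2$ geodesics.

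For the counts I would solve the corresponding electrostatics problems topologically. On each $A_{k_i-1}$ bubble $N^i$, $\phi_i$ is the potential of $k_i$ positive unit charges in $\bbR^3$, tending to $+\infty$ at the charges and to $0^+$ at infinity; compactifying to $S^3$ and applying Poincar\'e--Hopf to $\nabla\phi_i$ — each positive pole contributing $-1$ and the minimum at infinity $+1$ — forces at least $k_i-1$ critical points, giving the $k_i-1$ geodesics, and when the $x^i_j$ are well separated each equilibrium is a nondegenerate Morse-index-$2$ saddle, so the corresponding geodesics have index $1$. On $K_\epsilon$ the same bookkeeping is carried out for the $\bbZ_2$-symmetric potential $h$ whose charges are prescribed by \ref{eq:necessary condition for Laplace equation INTRO}; since the fixed points $q_j$ are themselves poles, all critical points occur in free $\bbZ_2$-orbits, and a Morse-theoretic analysis of the sub- and super-level sets of $h$ (whose topology is pinned down by the prescribed singular behaviour at the poles together with the Betti numbers of $\bbT_\Lambda/\bbZ_2$) produces the stated lower bounds of $5$ Morse-index-$1$ and $n+1$ Morse-index-$2$ critical points, i.e.\ $5$ index-$2$ and $n+1$ index-$1$ geodesics. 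Genericity of $p_1$ (and of $x^i_1$) is used to ensure these critical points are nondegenerate.

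Finally I would upgrade the model geodesics to honest closed geodesics of $g_\epsilon^{hk}$. Each model fiber geodesic comes in a nondegenerate critical circle (in the Morse--Bott sense) of the energy functional, so by the implicit function theorem for critical submanifolds — using Foscolo's smooth convergence $g_\epsilon^{hk}\to g_\epsilon^{gh}$ on $K_\epsilon$ and $\epsilon^{-2}g_\epsilon^{hk}\to g_{N^i}$ on $N^i_\epsilon$ — the perturbed energy functional retains a nearby critical circle of the same index, giving a genuine closed geodesic within Hausdorff distance $o(\epsilon)$ of the model fiber. The main obstacle is precisely this last step: because the fibers collapse, the relevant Sobolev estimates and the spectral gap of the Jacobi operator degenerate as $\epsilon\to 0$, so one must run the perturbation in $\epsilon$-rescaled norms adapted to the Gibbons--Hawking gluing and check that the discrepancy between $g_\epsilon^{hk}$ and the model is of smaller order than the fiber size $\sim\epsilon$, which is what yields the $o(\epsilon)$ localization. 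A secondary difficulty is the electrostatic count itself — Maxwell's problem of bounding and locating the equilibria of a charge configuration is classically hard — which is why the argument rests on the robust topological lower bounds rather than exact equilibrium counts, the sharper statements being reserved for the special configurations treated in Theorem \ref{thm:Main extended}.
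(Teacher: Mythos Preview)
Your proposal is correct and matches the paper's strategy: locate fiber geodesics over critical points of the relevant potentials, read off their index from the base Hessian via the relation $\mathrm{index}(\gamma)=3-\mathrm{index}(\Hess V)$, count critical points by Morse theory, and then perturb to $g_\epsilon^{hk}$ using White's theorem on the non-collapsing bubbles and a quantitative fixed-point argument (tracking that the Jacobi inverse blows up only like $\epsilon^{-1}$ while the metric error is $O(\epsilon^{(11-2\delta)/5})$) on the collapsing region $K_\epsilon$. The one place where the paper differs in implementation is the Morse count for $h$: rather than working on the orbifold $\mathbb{T}_\Lambda/\mathbb{Z}_2$ as you propose, the paper works upstairs on $\mathbb{T}_\Lambda$ by smoothly capping $h$ to a Morse function $\tilde h$ with local minima at the $q_j$ and local maxima at the $\pm p_i$, applies the strong Morse inequalities for $\mathbb{T}^3$ to get $\nu_1\ge 10$, $\nu_2\ge 2(n+1)$, and then halves by the free $\mathbb{Z}_2$-action, which sidesteps the orbifold bookkeeping entirely.
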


	Before stating the stronger results we promised, we shall say some words about the proof of this result and how to more precisely locate the geodesics. In order to construct and locate closed geodesics on $X_\epsilon$ we start by locating these for the approximate metrics on the building blocks $K_\epsilon^{gh}$, $M^j$, and $N^i$, which we subsequently deform to closed geodesics on $X_\epsilon$.
	
	On the other hand, to construct closed geodesics for the approximate metrics on the building blocks we investigate which circle fibers are geodesic for the hyperK\"ahler metrics constructed using the Gibbons--Hawking ansatz. It turns out that the circle fibers above critical points of $h$ are closed geodesics on $K_\epsilon^{gh}$ and that the circle fibers above the critical points of each $\phi_i$ are closed geodesics for $N^i$. Given that critical points of these functions correspond to electrostatic points for the resulting electric fields, it is at this point that the relation to electrostatics comes into play. As for the connection with Maxwell's 1873 problem in \cite{Maxwell}, it arises when trying to maximize the number of these geodesics, which corresponds to maximizing the number of electrostatic points. Maxwell's conjecture is that this is at most $(\ell-1)^2$ where $\ell$ is the number of point charges. See \cite{Gabrielov} for the only development towards this conjecture so far.
	
	Now that the geodesics for the approximate metrics have been determined it is time to deform these to the nearby hyperK\"ahler metric. White's deformation theorem in \cite{White} implies that the closed geodesics for $N^i$ can be deformed to closed geodesics for $N^i_\epsilon$ for sufficiently small $\epsilon$. On the other hand, given that the metric in $K_\epsilon$ is collapsing, one cannot use White's deformation theorem to deform the geodesics on $K_\epsilon^{gh}$ to geodesics on $K_\epsilon$ and doing so requires a separate analysis which we perform. This presents an additional difficulty as the Jacobi operator of the approximate geodesics is itself becoming degenerate as $\epsilon \to 0$. Our method to overcome this difficulty results from appropriately splitting the geodesic equation into pieces and performing a careful analysis which shows that each of the remaining terms are themselves converging to zero at a faster rate in $\epsilon$. As for the statement regarding the index, it follows from computing that of the approximate geodesics using the explicit form of the Jacobi operator, which can be written down in terms of the Hessians of $h$ and $\phi_i$ at the relevant points.
	
	Finally, as promised, we give concrete examples, some of which we summarize here. For a more comprehensive treatment, we refer the reader to Section \ref{sec:examples} which contains a more detailed version of these and several other concrete applications of our method. 
	
	\begin{example}
		Set all $m_1 = \ldots = m_8=0$ and consider the partition of $16$ given by $9+3+3+1$, then set $k_1=1$, $k_2=3=k_3$, $k_4=9$ and so $n=4$. According to Theorem \ref{thm:Main}, in this situation, there are at least $n+1=5$ closed geodesics of index one and $5$ of index two in the large open set $K_\epsilon$. These are within a distance of $\epsilon^{13/10}$ from the fibers over the critical points of $h$.
		
		Furthermore, $N^1$ is the Taub-Nut manifold which admits no closed geodesic and so we obtain no such in $N^1_\epsilon$. On the other hand, $N^2=N^3$ are gravitational instantons of type $A_2$ and we select them so that they are obtained from the Gibbons--Hawking anstaz with $\phi_2(x)=\phi_3(x)= 1+ \sum_{l=1}^3 \frac{1}{2|x-x_l|}$ given by the electric potential generated by $3$ electric charges $\lbrace x_1, x_2 ,x_3 \rbrace$ placed in the vertices of an equilateral triangle as in figure \ref{fig:triangle}.

		\begin{figure}[htp]\label{fig:triangle}
			\centering
			\includegraphics{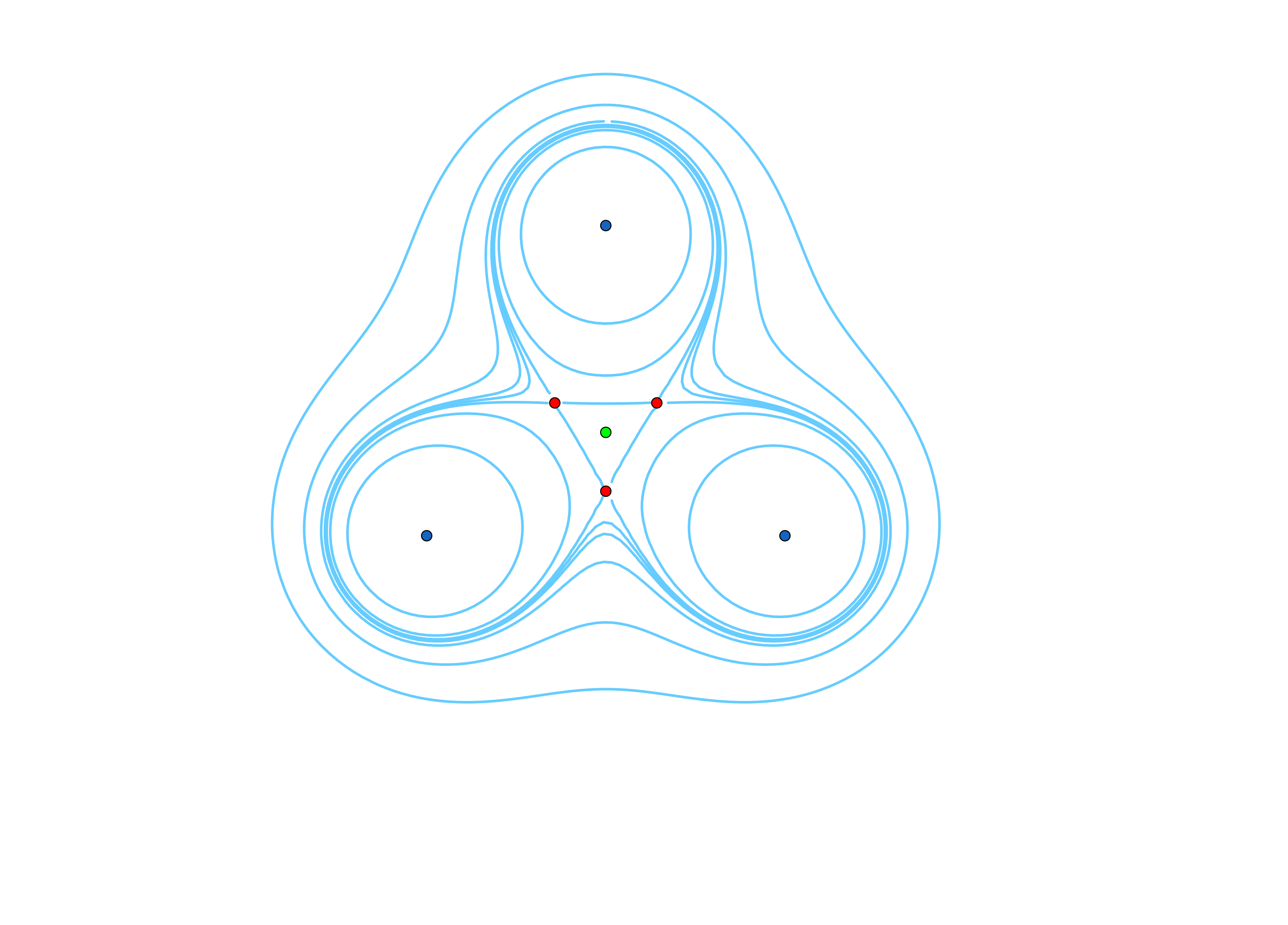}
			\caption{The points $\lbrace x_1, x_2 ,x_3 \rbrace$, in dark blue, are positioned at the vertices of an equilateral triangle and correspond to the location of the charges generating the potentials $\phi_2=\phi_3$ which determine the hyperK\"ahler metric in $N^2$ and $N^3$. Some of the level sets of this potential are represented in the picture in light blue.\\
			One can check that there are $4$ critical points of $\phi_2=\phi_3$ inside the triangle and the circle fibers over these points in $N^2 \cong N^3$ correspond to closed geodesics on these hyperK\"ahler manifolds. Overall, the red points correspond to index one geodesics and that the green one to an index two geodesic.}
		\end{figure}

		Using our analysis we find that, together, $N^2_\epsilon$ and $N^3_\epsilon$ carry a total of $6=2 \times 3$ closed geodesics of index one and $2=2 \times 1$ of index two, see the illustration of these geodesics given in figure \ref{fig:triangle}. These are located within Hausdorff distance $\epsilon \kappa(\epsilon)$ from those in $\epsilon^2 (N^2 \cup N^3)$ .
		
		Next, we consider $N^4$ which is an ALF gravitational instanton of type $A_8$ and can thus be determined using the Gibbons--Hawking ansatz with a potential $\phi_4$ obtained from $9$ unit charges in $\mathbb{R}^3$. We shall locate these at the vertices of three equilateral triangles whose centers we position at the vertices of a much larger equilateral triangle, see figure \ref{fig:tritriangle} below.
		
		\begin{figure}[htp]\label{fig:tritriangle}
		\centering
		\includegraphics[width=0.45\textwidth,height=0.25\textheight]{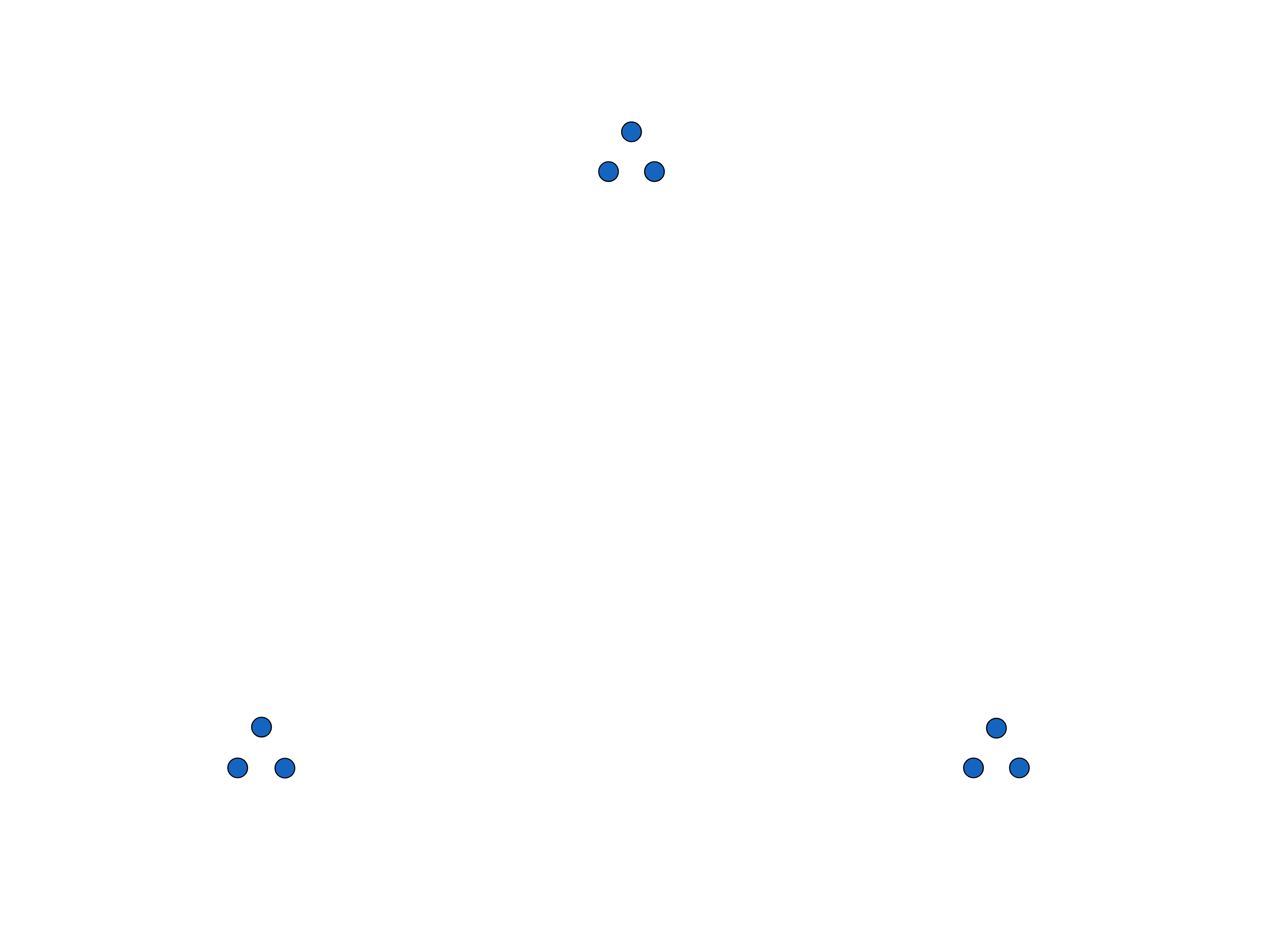}
		\caption{The $9$ points are located at the vertices of $3$ small equilateral triangles of side $d$ which we position at the vertices of a larger equilateral triangle of side $D \gg d$. As in \ref{fig:triangle}, there are $4$ critical points of $\phi_4$ inside each triangle, including the very large one. This results in a total of $12$ index one and $3$ index two closed geodesics.}
		\end{figure}
		 
		Overall, in the region $\bigcup_{i=1}^4 N^i_\epsilon$, we obtain $12+3 \times 2=18$ closed geodesics of index one and $4+2\times 1=6$ closed geodesics of index two. These, put together with those from before give a total of $23$ closed geodesics of index one and $11$ of index two in $K_\epsilon \cup \bigcup_{i=1}^4 N^i_\epsilon$.
		
		Furthermore, one can show that there are $8$ extra closed geodesics located in the set $\bigcup_{j=1}^8 M^i_\epsilon$, however their index is unknown. These come from deforming closed geodesics on the Atiyah--Hitchin manifold whose existence and moduli is known.
		
		This gives a total of $23+11+8=42$ closed geodesics whose positions we can approximately determine.
	\end{example}
	
	Let us see one more example.
	
	\begin{example}
		Again, we consider the situation when all $m_1 = \ldots = m_8=0$, but now with the partition of $16$ given by $4+4+4+4$ yielding $k_1=k_2=k_3=k_4=4$ and $n=4$. Then, we choose the points $\lbrace p_1, p_2, p_3, p_4 \rbrace$ so that, together with their symmetric image, they are the light blue points in figure \ref{fig:cubeG}. In this situation, we are able to locate $11$ closed geodesics on $K_\epsilon$ with precision $\epsilon^{13/10}$ as illustrated in the same figure. This is only one more than the lower bound given by Theorem \ref{thm:Main}. However, in this example we can determine the location of the geodesics simply using symmetry principles instead of having to find the critical points of $h$ which would require the use of computer approximations.
		
		\begin{figure}[htp]\label{fig:cubeG}
			\centering
			\includegraphics[width=0.65\textwidth,height=0.35\textheight]{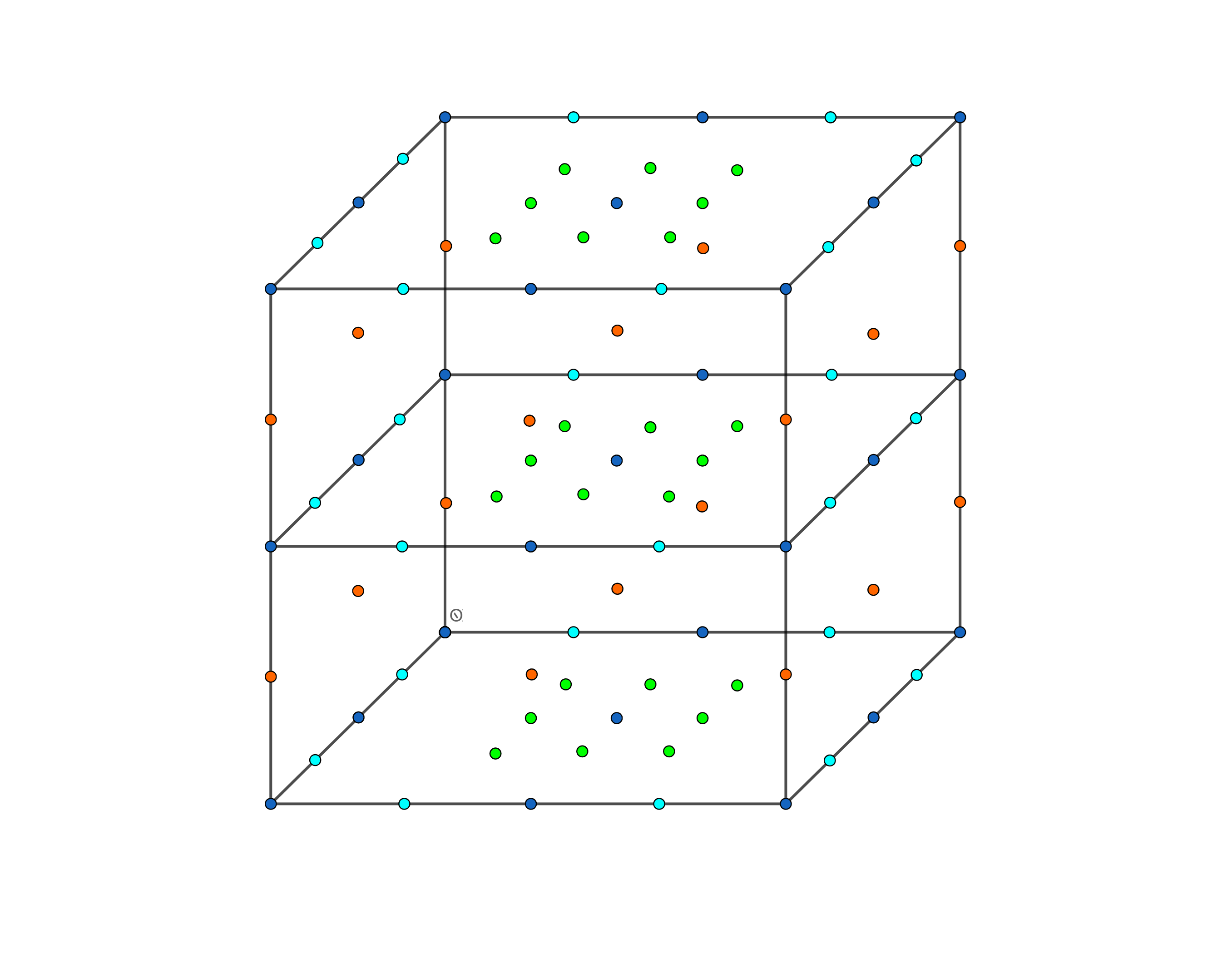}
			\caption{The points $\lbrace p_1, p_2, p_3, p_4 , -p_1,-p_2,-p_3,-p_4\rbrace$ are represented in light blue while those of $\lbrace q_1 , \ldots , q_8 \rbrace$ in dark blue. The function $h$ determining the metric in $K_\epsilon^{gh}$ is the electric potential generated by charges with value $(-4)$ at the dark blue points and charges with value $4$ at the light blue points. Symmetry arguments carried out in section \ref{ss:cube} show that $h$ has critical points at the orange and green points. This results in a total of $11=22/2$ closed geodesics on $K_\epsilon$.}
		\end{figure}
	
		As for the $N^i$, these are ALF gravitational instantons of type $A_3$. We choose all these to be isometric and given by the Gibbons--Hawking ansatz with potential $\phi$ having its singularities placed at the vertices of a square. Then, in each of these, there are $4$ closed geodesics of index one and $1$ of index two.
		
		In total, we obtain $11+4\times 5=31$ closed geodesics.
	\end{example}

	\subsection{Other results}
	
	Along the way towards proving our main goal we also establish other results which are interesting in their own right. For the reader's convenience, we state them here in passing.
	
	\begin{theorem}[No stable geodesics on some non-compact hyperK\"ahler $4$-manifolds]\label{thm:no stable closed geodesics}
		There are no closed geodesics stable to second order on Atiyah--Hitchin, Taub--Nut, and ALE or ALF $A_1$ gravitational instantons.
	\end{theorem}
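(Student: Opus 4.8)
The plan is to exploit the three parallel complex structures of the hyperK\"ahler structure to manufacture parallel normal variation fields along any closed geodesic, and then to play the resulting three second-variation forms off against Ricci-flatness. Write $\gamma$ for a closed unit-speed geodesic with tangent $T=\dot\gamma$, and recall that stability to second order means that the index form
$$ I(V,V)=\int_\gamma \big( |\nabla_T V|^2 - \langle R(V,T)T,V\rangle \big)\,dt $$
is non-negative on all periodic normal fields $V$. Since the K\"ahler forms are parallel, the complex structures satisfy $\nabla I=\nabla J=\nabla K=0$, so the fields $IT,JT,KT$ obey $\nabla_T(AT)=A(\nabla_T T)=0$; they are therefore parallel, and because $I,J,K$ are skew-adjoint and satisfy the quaternion relations, $\{IT,JT,KT\}$ is a parallel orthonormal frame of the normal bundle $T^\perp$.

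First I would feed these three fields into the index form. Parallelism kills the first term, leaving $I(AT,AT)=-\int_\gamma \langle R(AT,T)T,AT\rangle\,dt$ for $A\in\{I,J,K\}$. Summing over the three and using that $\{IT,JT,KT\}$ is an orthonormal basis of $T^\perp$ together with $R(T,T)T=0$, the integrand collapses to $\mathrm{Ric}(T,T)$, which vanishes since hyperK\"ahler metrics are Ricci-flat. Hence
$$ \sum_{A\in\{I,J,K\}} I(AT,AT)=-\int_\gamma \mathrm{Ric}(T,T)\,dt=0. $$
If $\gamma$ were stable, each summand would be $\ge 0$, so all three would vanish; each $AT$ would then lie in the null space of the positive semi-definite form $I$ and would consequently be a periodic Jacobi field. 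Being parallel, this forces $R(AT,T)T=0$ pointwise along $\gamma$, and as $IT,JT,KT$ span $T^\perp$ the full tidal operator $R_T:=R(\cdot,T)T$ vanishes identically along $\gamma$.

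The final ingredient is a curvature identity special to hyperK\"ahler $4$-manifolds. Working in the adapted frame $\{T,IT,JT,KT\}$, the self-dual Weyl tensor vanishes and the scalar curvature is zero, so the curvature operator is carried entirely by the anti-self-dual part $W_-$; a short computation then identifies the tidal operator with a nonzero multiple of $W_-$ written in this frame (the factor being $-\tfrac12$ in our conventions), with, in fact, the eigenvalues of $R_T$ independent of the chosen unit direction $T$. In particular $R_T=0$ if and only if $R=0$ at that point. Thus stability would force the curvature to vanish along all of $\gamma$, i.e. $\gamma$ to lie in the zero-curvature locus. The proof then concludes by noting that Atiyah--Hitchin, Taub--NUT, and the $A_1$ ALE and ALF instantons all have nowhere-vanishing curvature, so this locus is empty and no stable closed geodesic can exist.

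I expect the main obstacle to be this last step rather than the soft variational argument: verifying that the curvature is nowhere zero. For Taub--NUT and the $A_1$ Gibbons--Hawking (ALE Eguchi--Hanson and ALF) metrics this can be read off from explicit formulae, where the curvature decays at infinity but never vanishes; for Atiyah--Hitchin, where no elementary closed form is available, one must instead invoke its known irreducibility and curvature behavior. A secondary point requiring care is the standard but essential claim that a null direction of a positive semi-definite index form is automatically Jacobi, which underpins the passage from $I(AT,AT)=0$ to the pointwise vanishing $R(AT,T)T=0$.
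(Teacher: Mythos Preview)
Your strategy coincides with the paper's: reduce via the Bourguignon--Yau argument to showing that the Riemann curvature never vanishes, then verify this case by case. Your reconstruction of the Bourguignon--Yau step (parallel normal fields $IT,JT,KT$, Ricci-flatness forcing each $I(AT,AT)=0$, hence $R_T=0$ pointwise, and the identification of $R_T$ with $\tfrac12 W_-$ so that $R_T=0$ forces $R=0$) is correct and is exactly what the paper cites from \cite{BY}. For Taub--NUT and the ALE/ALF $A_1$ cases the paper, like you, computes $|\mathrm{Riem}|^2$ directly from explicit Gibbons--Hawking curvature formulae and checks it never vanishes.

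The only genuine gap is your treatment of Atiyah--Hitchin. Saying one must ``invoke its known irreducibility and curvature behavior'' is not an argument: irreducibility (of holonomy, or as a Riemannian product) does not prevent the curvature tensor from vanishing at isolated points or along curves. The paper handles this case differently and concretely. It writes the metric in cohomogeneity-one form $g=dr^2+a^2\sigma_1^2+b^2\sigma_2^2+c^2\sigma_3^2$, observes that one of the sectional curvatures is $a''/a$, and cites Lemma~10.10 of \cite{AH} for $a''(r)<0$ when $r>0$; so curvature is nonzero off the bolt. The remaining locus $\{r=0\}$ is a minimal $\mathbb{RP}^2$, and the paper rules out a stable closed geodesic lying in it by a symmetry argument: $SO(3)$ acts transitively on its unit tangent bundle, so if one tangent geodesic stayed in the bolt they all would, making the $\mathbb{RP}^2$ totally geodesic --- which it is not. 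This two-step argument (explicit ODE away from the bolt, transitivity at the bolt) is what you need in place of your hand-wave to complete the proof.
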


	There are other interesting non-existence results in the literature. See \cite{Olsen2} which considers the Eguchi--Hanson space and \cite{Tsai2} for examples on other special holonomy Riemannian manifolds. 
	
	The next result constructs closed geodesics on hyperK\"ahler $4$-manifolds which are stable to second order, but which are overall unstable, i.e. not local minima of the length functional.

	\begin{theorem}[Unstable geodesics which are stable to second order]\label{thm:stable to second order INTRO}
		For infinitely many $k \in \mathbb{N}$, there is an ALE gravitational instanton of type $A_k$ containing a closed geodesic which is unstable, but is stable to second order.
	\end{theorem}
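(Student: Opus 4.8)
The plan is to reduce the statement to a question about the electric potential $\phi$ in the Gibbons--Hawking description of the ALE $A_k$ instanton, and then to settle that question by a symmetry (representation-theoretic) argument. Recall that an ALE gravitational instanton of type $A_k$ arises from the Gibbons--Hawking ansatz with potential $\phi = \sum_{l=1}^{k+1}\frac{1}{2|x-x_l|}$ generated by $k+1$ distinct unit charges $x_1,\ldots,x_{k+1}\in\bbR^3$, and that the circle fibre over a regular critical point $x_0$ of $\phi$ is a closed geodesic. As in the index computation underlying Theorem \ref{thm:no stable closed geodesics} --- a computation local to $x_0$, hence equally valid in the ALE case --- the Jacobi operator of such a fibre is governed by $\Hess\phi(x_0)$: its Morse index equals the number of positive eigenvalues of $\Hess\phi(x_0)$, and the second variation restricted to the horizontal zero-modes is the quadratic form proportional to $-\Hess\phi(x_0)$. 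Since $\phi$ is harmonic away from the charges, $\Hess\phi(x_0)$ is trace-free; hence the fibre is \emph{stable to second order} (index form nonnegative) if and only if $\Hess\phi(x_0)$ is negative semi-definite, which for a trace-free matrix forces $\Hess\phi(x_0)=0$. Thus the theorem amounts to producing, for infinitely many $k$, a configuration whose potential has a critical point $x_0$ with vanishing Hessian that is nevertheless not a local maximiser of $\phi$, so that the fibre fails to be a local minimiser of length.

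First I would build the required configurations using the symmetry of the regular tetrahedron. Fix $m\geq 1$ and take $k+1=4m$ charges placed on $m$ concentric, identically oriented regular tetrahedra of distinct radii centred at the origin, none of whose vertices lies at $0$; this yields an honest ALE instanton of type $A_{4m-1}$. The key point is that $\nabla\phi(0)$ and $\Hess\phi(0)$ are, respectively, the dipole ($\ell=1$) and quadrupole ($\ell=2$) moments of the configuration, transforming under $SO(3)$ in the $3$- and $5$-dimensional irreducible representations. A character computation for the tetrahedral rotation group $T\cong A_4$ shows that neither the $\ell=1$ nor the $\ell=2$ representation contains the trivial summand when restricted to $T$; since the charge configuration is $T$-invariant, both moments are forced to vanish, i.e. $\nabla\phi(0)=0$ and $\Hess\phi(0)=0$. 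Letting $m$ range over the positive integers produces examples for all $k=4m-1$, hence for infinitely many $k$.

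It remains to check that the fibre over $0$ is genuinely unstable. Because $\nabla\phi$ and $\Hess\phi$ vanish at $0$, the Taylor expansion of $\phi-\phi(0)$ begins at order three, and its leading term is the $T$-invariant harmonic cubic, proportional to $xyz$ in suitable coordinates (the octupole moment). A regular tetrahedron of equal charges is the classical example of a distribution whose lowest non-vanishing multipole is the octupole, so this coefficient is non-zero; for concentric, identically oriented copies the octupole moments add without cancellation, and one checks (generically, or by choosing the radii) that they do not conspire to vanish. Since $xyz$ changes sign in every neighbourhood of $0$, there are points $x$ arbitrarily close to $0$ with $\phi(x)>\phi(0)$; the fibre over such an $x$ is a nearby loop of length $2\pi\phi^{-1/2}(x)<2\pi\phi^{-1/2}(0)$, so the geodesic is not a local minimiser of length and is therefore unstable, while being stable to second order by the first paragraph. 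The main obstacle is the second-order analysis: one must confirm from the explicit Jacobi operator that $\Hess\phi(0)=0$ really kills \emph{all} negative modes, not merely the horizontal zero-modes, so that the index form is genuinely non-negative despite the degeneracy. Since the higher Fourier modes along the fibre contribute positively, this reduces to the zero-mode analysis already carried out for the main theorem; a secondary point to pin down is the non-vanishing of the combined octupole moment, which is what guarantees the higher-order instability.
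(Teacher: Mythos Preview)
Your proposal is correct and follows essentially the same route as the paper: place the charges at the vertices of concentric Platonic solids (you use tetrahedra, the paper allows any Platonic solid), observe that the symmetry group has no invariant vectors in the degree-one or degree-two harmonic polynomials, conclude that $\nabla\phi(0)=0$ and $\Hess\phi(0)=0$, and deduce that the fibre over the origin is a closed geodesic stable to second order.

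Two remarks. First, your instability argument via the octupole moment works but is more elaborate than necessary: since $\phi$ is harmonic and non-constant near $0$, the strong maximum principle already forbids a local maximum, so there are nearby points with $\phi(x)>\phi(0)$ and hence strictly shorter fibres. This bypasses any need to track the third-order term or to worry about whether the octupoles of the different shells conspire to cancel. Second, the ``main obstacle'' you flag is not in fact an obstacle: from the explicit Jacobi operator in the Gibbons--Hawking setting, the mode-$n$ equation at a critical point with vanishing Hessian reads $0 = 2\phi^2\bigl(\lambda + n^2\phi\bigr)V_{i,n}$ for a non-positive eigenvalue $-\lambda\le 0$ of $-J_{\gamma_0}$; for $n\neq 0$ the bracket is strictly positive regardless of whether one is in the ALE or ALF case, so $V_{i,n}=0$ and all negative modes are killed without any hypothesis on $m$.
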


	Finally, by feeding Theorem \ref{thm:no stable closed geodesics} into Foscolo's construction one obtains the following result.

	\begin{theorem}[Stable geodesics forbidden to enter ``bubble'' region]
		Let $X_\epsilon$ be a Foscolo K3 surface constructed with all $m_1=\ldots =m_8=0$ and $k_1, \ldots , k_n \in \lbrace 1, 2 \rbrace$. Then, any closed geodesic in $X_\epsilon$ which is stable to second order must be totally contained in $K_\epsilon$, i.e. it does not enter the bubble regions $\bigcup_{j=1}^8 M^j_\epsilon \cup \bigcup_{i=1}^n N^i_\epsilon$.
	\end{theorem}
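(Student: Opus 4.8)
The plan is to use, crucially, that $X_\epsilon$ is hyperk\"ahler and that on a hyperk\"ahler $4$-manifold second-order stability of a closed geodesic is extremely rigid. The engine of the argument---also the natural mechanism behind Theorem~\ref{thm:no stable closed geodesics}---is the following claim, which I would prove first. \emph{If $\gamma$ is a closed geodesic on a hyperk\"ahler $4$-manifold and is stable to second order, then the Riemann tensor vanishes at every point of $\gamma$.} Writing $T=\dot\gamma$ and letting $\Phi_1,\Phi_2,\Phi_3$ be the three parallel complex structures, the triple $\{\Phi_1T,\Phi_2T,\Phi_3T\}$ is a parallel orthonormal frame of the normal bundle of $\gamma$; thus any periodic normal field is $V=\sum_a f_a\,\Phi_aT$ with $f=(f_1,f_2,f_3)$ periodic, and since the frame is parallel the index form (second variation of energy) collapses to
\[
 Q(V)=\int_0^L\Big(|f'|^2-\langle \mathcal R(t)f,f\rangle\Big)\,dt,
\]
where $\mathcal R_{ab}=\langle R(\Phi_aT,T)T,\Phi_bT\rangle$ is a symmetric $3\times3$ matrix with $\tr\mathcal R=\mathrm{Ric}(T,T)=0$.

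Next I would extract pointwise vanishing from stability. Taking $V=\Phi_aT$ gives $Q=-\int\mathcal R_{aa}\ge0$, so $\int\mathcal R_{aa}\le0$; summing over $a$ and using $\tr\mathcal R\equiv0$ forces $\int\mathcal R_{aa}=0$ for each $a$. Hence the constant $g\equiv1$ minimises $g\mapsto\int(g'^2-\mathcal R_{aa}g^2)$, and its Euler--Lagrange equation yields $\mathcal R_{aa}\equiv0$. Testing then with $V=\Phi_aT+\delta\,\psi(t)\,\Phi_bT$ and letting $\delta\to0$ makes the off-diagonal entries integrate against arbitrary $\psi$ to zero, so $\mathcal R\equiv0$ along $\gamma$. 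Finally I would identify $\mathcal R$ with curvature: the three K\"ahler forms are parallel and self-dual, so the curvature operator annihilates $\Lambda^+$ and is governed by the anti-self-dual Weyl tensor $W^-$; a short computation in the frame $\{T,\Phi_1T,\Phi_2T,\Phi_3T\}$ gives $\mathcal R=\tfrac12W^-$. Therefore $\mathcal R\equiv0$ is equivalent to $R\equiv0$ along $\gamma$, proving the claim. (This is consistent with Theorem~\ref{thm:stable to second order INTRO}: stability to second order does not force ambient flatness, only that the geodesic lie in the curvature's zero locus $\{R=0\}$.)

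With this mechanism the theorem becomes a statement about where curvature can vanish. Applied to the compact hyperk\"ahler manifold $X_\epsilon$, it shows that any closed geodesic $\gamma_\epsilon$ stable to second order satisfies $\gamma_\epsilon\subseteq\{R_{g_\epsilon^{hk}}=0\}$. It therefore suffices to prove that the bubble regions are nowhere flat. This is exactly where the hypotheses enter: under $m_1=\dots=m_8=0$ and $k_i\in\{1,2\}$, the convergence statements~(2)--(3) for the building blocks say that $\epsilon^{-2}g_\epsilon^{hk}$ restricted to each $M^j_\epsilon$, $N^i_\epsilon$ converges smoothly, on compact sets, to the Atiyah--Hitchin manifold, to Taub--NUT, or to the ALF $A_1$ instanton. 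The input I would draw from the proof of Theorem~\ref{thm:no stable closed geodesics} is the sharper geometric fact behind it, namely that each of these three model spaces has nowhere-vanishing Riemann tensor. Since vanishing of $R$ is invariant under the rescaling and the convergence is smooth, for all small $\epsilon$ the tensor $R_{g_\epsilon^{hk}}$ is bounded away from zero on the bubble regions; hence $\{R_{g_\epsilon^{hk}}=0\}$ misses $\bigcup_j M^j_\epsilon\cup\bigcup_i N^i_\epsilon$ and $\gamma_\epsilon\subseteq K_\epsilon$, as claimed.

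The hard part will be the last step: a lower bound $|R_{g_\epsilon^{hk}}|\ge c>0$ that holds on the \emph{entire} bubble region, for the true metric $g_\epsilon^{hk}$ rather than the model, and uniformly as $\epsilon\to0$. On a fixed compact core this is immediate from smooth convergence, but towards the ALF ends---where the neck attaches to $K_\epsilon$---the model curvature is itself small, so one must compare the gluing error with the model curvature and check that the former is genuinely of lower order throughout, using the quantitative decay rates in Foscolo's construction. This is also the precise point where $m_j=0$ and $k_i\in\{1,2\}$ are indispensable: for the higher instantons $A_{k-1}$ with $k\ge3$ the Riemann tensor acquires genuine zeros (these underlie the stable-to-second-order geodesics produced in Theorem~\ref{thm:stable to second order INTRO}), so $\{R=0\}$ would meet the bubble and the confinement could fail.
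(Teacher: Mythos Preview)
Your proposal is correct and follows essentially the same route as the paper. The paper does not prove this theorem in a separate section; it states it in the introduction with the remark that one obtains it ``by feeding Theorem~\ref{thm:no stable closed geodesics} into Foscolo's construction,'' and the relevant ingredients are scattered through Sections~\ref{sec:D0} and~\ref{ss:stable geodesics GH}: the Bourguignon--Yau rigidity (cited from \cite{BY} rather than re-derived as you do), the explicit curvature computations for Taub--NUT and the ALF $A_1$ space showing $|\mathrm{Riem}|>0$ everywhere, the Remark after Section~\ref{ss:AH nonexistence} asserting the same for Atiyah--Hitchin on compact subsets, and then an appeal to continuity under the smooth convergence $\epsilon^{-2}g_\epsilon^{hk}\to g_{\text{model}}$. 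Your identification of the neck region as the delicate point is more careful than the paper's treatment, which handles it with the single word ``continuity''; your observation about why $k_i\ge3$ would break the argument is exactly the content of Examples~\ref{ex:second order stable geodesic}--\ref{ex:Platonic} and the corollary proving Theorem~\ref{thm:stable to second order INTRO}.
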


	\subsection{Organization}
	
	This article is organized in the following manner. We start in section \ref{sec:Preliminaries} by reviewing what the existing literature on closed geodesics has to say about this problem. Namely, we recapitulate what Morse theory for the energy functional implies for the present situation and by considering Kummer's K3 surfaces in passing. Section \ref{sec:Foscolo K3} is rather short and is devoted to explaining Foscolo's construction of Ricci-flat K\"ahler metrics on K3 surfaces with an eye towards our applications. 
	
	In sections \ref{sec:D0} and \ref{sec:Gibbons-Hawking} we consider the problem of constructing closed geodesics on the building blocks of Foscolo's K3 surfaces. These sections contain a mixture of original results with others that are drawn from the literature. Furthermore, in section \ref{sec:Jacobi} we investigate some geometric/analytic properties of these geodesics on the building blocks and abstract them into propositions which will be useful when it comes to deform these geodesics later on.
	
	Finally, in sections \ref{sec:deform geodesics concentrated} and \ref{sec:deform geodesics collapse} we deform the geodesics on the building blocks to actual geodesics on Foscolo's K3 surfaces. These sections respectively focus on the curvature concentrated and collapsing regions. We finalize the article with section \ref{sec:examples} which explores several examples and serve as applications of our results.

	\subsection{Acknowledgments}
	
	The author is very grateful for many discussions which highly improved his understanding of the subject of this article. In particular, for this reason, the author would like to thank Arnav Tripathy, Jørgen Olsen Lye, Lorenzo Foscolo, Max Zimet, Ruobing Zhang, Song Sun, Tamás Hausel, Willem Salm, Yu-Shen Lin.
	
	This work was in part motivated by a talk that the author saw \cite{DouglasTalk}, where Michael Douglas showed a numerical approximation of a closed geodesic in a Calabi--Yau manifold.  
	
	This work was supported by the NOMIS foundation as it was started while the author was a NOMIS Fellow in the Hausel group at IST Austria and finished at the author's current institution Instituto Superior T\'ecnico. The author also had a research membership at SLMath (former MSRI) whose hospitality he wants to acknowledge. The work here presented was developed at all these three institutions.

	\section{Prelimaries}\label{sec:Preliminaries}
	
	In this section we lay out some preliminaries which are supposed to answer some basic questions that may arise in the reading of the rest of this article. The contents of this section are as follows. In \ref{ss:length-energy-nullity-index} we recall the definitions of length, energy, nullity and index of a geodesic. In doing so, we also must recapitulate the definition of the Jacobi operator. In \ref{ss:stability} we lay out $4$ different notions of stability for a closed geodesic in a Riemannian manifold and explain how they are related to one another. 
	
	The subsections \ref{ss:Morse Theory} and \ref{ss:Kummer} already contain somewhat original results, but they result from simple applications of known theory. In \ref{ss:Morse Theory} we explain what basic Morse theory arguments have to say about the existence of index one closed geodesics on K3 surfaces. Finally, in \ref{ss:Kummer} we use White's deformation theorem in order to locate some closed geodesics for the Ricci-flat K\"ahler metrics on the K3 surfaces obtained from Kummer's construction.

	\subsection{Length, energy, nullity and index}\label{ss:length-energy-nullity-index}
	
	Let $\mathcal{I}=[0,2\pi]$ or $\mathcal{I}=S^1$. The length of a path $\gamma:\mathcal{I} \to X$ in a Riemannian manifold $(X,g)$ is defined as
	$$\ell(\gamma)=\int_\mathcal{I} |\dot{\gamma}(t)| dt ,$$
	where $|\dot{\gamma}(t)|^2 = g (\dot{\gamma}(t),\dot{\gamma}(t))$. In many applications, one is interested in finding paths which minimize this functional (for $\mathcal{I}=[0,2\pi]$ we must have both $\gamma(0)$ and $\gamma(2\pi)$ fixed). These are in particular geodesics. However, given its better analytic properties, it is more convenient to instead minimize the energy functional
	$$\cE(\gamma)=\int_\mathcal{I} |\dot{\gamma}(t)|^2 dt$$
	which, due to the Sobolev embedding $W^{1,2}(\mathcal{I}) \hookrightarrow C^{0,\frac{1}{2}}(\mathcal{I})$ and the Arzel\'a-Ascoli theorem, satisfies the Palais--Smale condition.
	
	Infinitesimal variations of $\gamma$ can be parametrized using normal vector fields $V \in \Gamma(\mathcal{I}, \gamma^*(TX)^\perp)$, i.e. those vector fields so that $g(\dot{\gamma}, \gamma_* V)=0$ pointwise. Omitting push-forwards and pull-backs, such a vector field gives rise to a variation of the form  
	$$f(t,s):= \exp_{\gamma(t)} \left( sV(t) \right).$$
	Then, using the identity
	\begin{align*}
		\frac{D}{ds} \frac{\partial f}{\partial t}  = \frac{D}{dt} \frac{\partial f}{\partial s},
	\end{align*}
	we consider $\cE(f(t,s))$, $\ell(f(t,s))$ and we find that their first variations, at $s=0$, are given by
		\begin{align*}
		(\partial_{s} \cE)|_{s=0}  & =  - 2 \int_\mathcal{I} g \left( \frac{D^2 \gamma}{dt} , V \right) dt ,  \\
		(\partial_{s} \ell)|_{s=0} & =  - \int_\mathcal{I} g \left( \frac{D}{dt} \left( \frac{\dot{\gamma}}{|\dot{\gamma}|} \right), V \right) dt ,
	\end{align*}
	the later vanishes if $\gamma$ is a geodesic while the first only vanishes if $\gamma$ is further parametrized using a multiple of the arc-length parameter. On the other hand, using the identity
	\begin{align*}
		\frac{D}{ds} \frac{D}{dt} \frac{\partial f}{\partial s}  = \frac{D}{dt} \frac{D}{d s} \frac{\partial f}{\partial s} + \mathrm{Riem} \left( \frac{\partial f}{\partial s} , \frac{\partial f}{\partial t} \right) \frac{\partial f}{\partial s} ,
	\end{align*}
	we find that the second variations are given by
	\begin{align*}
		(\partial^2_{s} \cE)|_{s=0} & = - 2 \int_\mathcal{I}  g \left( \frac{D^2 \gamma}{dt} , \frac{D^2 f}{ds^2} \Big\vert_{s=0}  \right) dt  + 2 \int_0^{2\pi}  g \left(  - \frac{D^2 V}{dt^2} + \mathrm{Riem}( \dot{\gamma}  , V) \dot{\gamma} ,V \right)  dt
	\end{align*}
	and
	\begin{align*}
		(\partial^2_{s} \ell ) |_{s=0} & =  - \int_\mathcal{I} g \left( \frac{D}{dt} \left( \frac{\dot{\gamma}}{|\dot{\gamma}|} \right), \frac{D^2 f}{ds^2} \Big\vert_{s=0} \right) dt + \int_0^{2\pi} g \left(  - \frac{D^2}{dt ds} \left( \frac{\dot{\gamma}}{|\dot{\gamma}|} \right) + \mathrm{Riem}(\dot{\gamma},V) \frac{\dot{\gamma}}{|\dot{\gamma}|}  , V \right)  dt .
	\end{align*}
	Again, the first of these terms vanish if $\gamma$ is a geodesic. Furthermore, if $\gamma$ is parameterized with respect to a constant multiple of the arc-length parameter, then the second terms in these second variations become the same (up to an overall constant). Motivated by this, one defines the Jacobi operator 
	$$J_{\gamma}: \Gamma(\mathcal{I}, \gamma^*(TX)^\perp) \to \Gamma(\mathcal{I}, \gamma^*(TX)^\perp),$$
	of a curve $\gamma$ as the negative of the operator associated with the second variation (when evaluated at a geodesic), i.e.
	\begin{equation}
		J_{\gamma}(V) = \frac{D^2 V}{dt^2} - \mathrm{Riem}( \dot{\gamma}  , V) \dot{\gamma} .
	\end{equation}
	Notice in particular that $(\partial_s^2 \cE)|_{s=0} = - \int_\mathcal{I} g (V, J_\gamma (V)) dt$. Then, one defined the nullity of a geodesic $\gamma$ as $\dim \left( \ker (J_{\gamma}) \right)$, and its index of $\gamma$ as the dimension of the maximal subspace on which $-J_{\gamma}$ is negative definite.

	\subsection{Notions of stability}\label{ss:stability}
	
	At this point it is convenient to consider several different notions of stability for geodesics. A geodesic is said to be (strictly) stable if it is a (strict) local minimum of $\ell$, or equivalently $\cE$. On the other hand, a geodesic is called stable to second order if it had index zero, and strictly stable to second order if furthermore it has
	vanishing nullity. These different notions of stability are related in the following manner: strictly stable geodesics are immediately stable, while those which are strictly stable to second order are in particular stable to second order. Furthermore, any stable geodesic is in particular stable to second order.
	
	In the particular case when $(X^4,g)$ is a $4$-dimensional hyperK\"ahler manifold, as is the case that we consider in this article, a result of Bourguignon--Yau \cite{BY} using the variations $I \dot{\gamma}$, $J\dot{\gamma}$ and $K\dot{\gamma}$ together with $\mathrm{Ric}=0$ shows that along any closed geodesic which is stable to second order, the full curvature vanishes, i.e. $\mathrm{Riem}=0$ along $\gamma(\mathcal{I})$. Given that $I,J,K$ and $\dot{\gamma}$ are parallel, it then follows that all these three variations are in the kernel of the Jacobi operator and so any such geodesic actually has nullity at least three and so cannot be strictly stable to second order. To the author's knowledge, no stable closed geodesic on an hyperK\"ahler 4-manifold is known to exist. Despite this fact, in \cite{Douglas}, two conjectures are posed on the existence of stable closed geodesics on (compact) Calabi-Yau manifolds. These posit that on any such there are several stable closed geodesics.
	
	As we shall later see in example \ref{ex:second order stable geodesic} by an explicit example, it is possible to have a closed geodesic in such an hyperK\"ahler manifold that is not stable, but is stable to second order.
	
	We further remark on the existence of a closed geodesic, stable to second order, on a specific K3 surface constructed using the Kummer construction. This was established in \cite{Olsen}, see also \cite{Olsen2}. However, whether this geodesic actually correspond to a local minima of the length functional is not known.

	\subsection{Geodesics from Morse theory}\label{ss:Morse Theory}
	
	\subsubsection{Existence of index one geodesics}
	
	Let $LX$ denote the Loop space of $X$, there is an isomorphism between $\pi_1(LX)$ and $\pi_2(X)$. Furthermore, if $X$ is simply connected, as is the case when $X$ is the differentiable manifold underlying the K3 surface, then by the Hurewicz isomorphism $\pi_2(X) \cong H_2(X, \mathbb{Z})$. Combining these two isomorphisms we conclude that
	$$b_1(LX)=b_2(X).$$
	Given that geodesics are critical points of the energy functional, $\cE$, which is Palais-Smale in $C^{k,\alpha} (S^1,X)$, we can apply Morse theory for $\cE$ and conclude from the Morse inequalities, as in Theorem 2.4.12 of \cite{Klingenberg}, that for the generic metric on $X$ the number of index one closed geodesics is at least $b_2(X)$ for the generic metric on $X$.\footnote{As stated in \cite{Klingenberg} the necessary condition on the metric is that all simple closed geodesics be non-degenerate, i.e. have zero nullity. However, by White's bumpy metrics theorem, this is the case for the generic metric on $X$.}
	
	For the K3 surface, we have $b_2(X)=22$ but it is not clear that the Ricci-flat K\"ahler metrics on it are generic in the intended sense, in fact it seems very likeley that they are not. For instance, it follows from the discussion in \ref{ss:stability} that if a closed geodesic stable to second order exists, then it has nullity at least three and so the metric cannot be generic in the needed sense. Instead, one may use the classes in $H_1(LX)$ to carry out a min-max argument and construct $22$ closed geodesics on $X$. However, this argument does not immediately imply that these have index one, just that the index is at least one.

	\subsubsection{Infinitely many closed geodesics}
	
	It is a well known result of Gromoll and Meyer, \cite{Gromoll}, that if $H^\ast (LX)$ is infinitely generated, then there are infinitely many geometrically distinct closed geodesics on $X$ which can be associated with the classes generating $H^\ast (LX)$. The index of these geodesics coincides with the degree of the class in $H^\ast (LX)$ which gives rise to it through the Morse theory. Furthermore, by \cite{VS} and \cite{Poirrer}, this is a the case for a very large class of differentiable manifolds, including the one underlying K3 surface. Hence, Morse theory for the energy functional guarantees the existence of infinitely many closed geodesics on any hyperk\"ahler K3 surface. However, the index of these geodesics increases in an unbounded manner with the degree of the generator of $H^*(LX)$ which gives rise to it. Furthermore, from abstract Morse theoretic arguments such as this one, nothing is learned about the location of the resulting geodesics and their other geometric properties.
	
	\begin{remark}
		We leave it as a suggestion for future work, the problem of computing a minimal model for $LX$ using the method of \cite{VS} and use it to more fully investigate the Morse theory for $\cE$. We believe this will lead to interesting results which one can try and relate to the Laplace spectrum by making use of the trace formula as in \cite{Guillemin_Weinstein,Zelditch}.
	\end{remark}

	\subsection{Closed geodesics on Kummer's K3 surfaces}\label{ss:Kummer}
	

	In this section we explain how to locate $48$ closed geodesics on the K3 surfaces constructed using the Kummer construction, which we shall refer to as Kummer's K3 surfaces. Recall that these are obtained by starting with a flat $4$-torus $\mathbb{T}_\Lambda=\mathbb{C}^2/\Lambda$ where $\Lambda$ is a full rank lattice and the orbifold $T_\Lambda/\mathbb{Z}_2$ where $\mathbb{Z}_2$ acts via $(z_1,z_2) \mapsto (-z_1,-z_2)$. Locally, the 16 singularities are modelled on $\mathbb{C}^2/\mathbb{Z}_2$ and can be desingularized by blwoing up. This gives a complex surface $X$ which is a K3 surface. 
	
	For $\epsilon=(\epsilon_1, \ldots , \epsilon_{16})$, with $|\epsilon| \ll 1$, one then considers an approximately Ricci-flat metric $g_{\epsilon}$ obtained by gluing the flat metric on the complement of the singular points in $\mathbb{T}_\Lambda/\mathbb{Z}_2$ with $16$ copies of the Eguchi-Hanson metric around each exceptional divisor $E_i$ having volume $\epsilon_i^2$. These metrics can then be perturbed to obtain an hyperk\"ahler metric $g_\epsilon^{hk}$ on $X$ which as $|\epsilon| \to 0$ converges to the flat orbifold $\mathbb{T}_\Lambda/\mathbb{Z}_2$. The details of this construction can be found in \cite{LS,Donaldson,Kobayashi}. 
	
	Furthermore, fixing open neighborhoods $U_i$ of the exceptional divisors $E_i$, for $i=1, \ldots , 16$, the rescaled metrics $\epsilon_i^{-2}g_\epsilon^{hk}|_{U_i}$ geometrically converge, as $\epsilon_i \to 0$, to a fixed Eguchi-Hanson metric where the volume of the totally geodesic $2$-sphere is $1$. Again, for the details we refer the reader to one the previously mentioned references, namely \cite{LS,Donaldson,Kobayashi}. Also, recall that the totally geodesic $2$-sphere in the Eguchi-Hanson space is actually a round $2$-sphere, and so the space of closed geodesics on such a round $2$-sphere is $\mathbb{RP}^3$ which has Lusternik--Schnirelmann number $3$.
	
	Hence, it follows from applying Brian White's deformation, stated as Theorem 3.2 in \cite{White}, that for $\epsilon_i \ll 1$ there are at least $3$ closed geodesics for $g_\epsilon^{hk}$ inside $U_i$. This results in a total of $16 \times 3= 48$ closed geodesics on any Kummer K3-surface, all of which have index at least $1$. 
	
	Furthermore, by example \ref{ex:A_8} in \ref{sec:Gibbons-Hawking} and continuity\footnote{The conclusion of this example is not original. It is contained at least in the references \cite{Chen} and \cite{Olsen}}, we must have that the Riemann curvature tensor of $g_\epsilon^{hk}$ never vanishes inside each open set $U_i$ and so no stable geodesic in $(X,g_\epsilon^{hk})$  can penetrate the region $\bigcup_{i=1}^{16} U_i$. This was already observed in \cite{Olsen2} and contradicts a possibility raised in section 4.4. of \cite{Douglas}.

%
%
%

	\section{Foscolo's $K3$ surfaces}\label{sec:Foscolo K3}
	
	In this section we shall describe Foscolo's construction of Ricci-flat K\"ahler metrics on K3 surfaces. The original reference for this construction is \cite{Foscolo1}, and we also recommend the survey article \cite{Foscolo2}. From now on we shall $\Lambda$ be a full rank lattice in $\mathbb{R}^3$. Then, we let $(\mathbb{T}_\Lambda = \mathbb{R}^3 / \Lambda ,g_E)$ be a flat $3$-torus and $\tau: \mathbb{T}_\Lambda \to \mathbb{T}_\Lambda$ the involution $\tau(x)=-x$. The fixed points of this involution satisfy $2x =0 \mod \Lambda$ and consist of $8$ distinct points in $\mathbb{T}_\Lambda$ which we shall denote by $\lbrace q_1 , \ldots , q_8\rbrace$. Furthermore, we consider an extra set of $2n$ points $\lbrace p_1, \ldots , p_n , \tau(p_1) , \ldots , \tau(p_n) \rbrace$ and the punctured torus
	$$T:= \mathbb{T}_\Lambda \backslash \lbrace q_1 , \ldots , q_8, p_1, \ldots , p_n , \tau(p_1) , \ldots , \tau(p_n) \rbrace .$$  
	Then, fix $m_1, \ldots , m_8 \in \mathbb{N}_0$ and $k_1, \ldots , k_n \in \mathbb{N}$ satisfying 
	\begin{equation}\label{eq:necessary condition for Laplace equation}
		\sum_{i=1}^n k_i + \sum_{j=1}^8 m_j =16
	\end{equation}
	and solve the equation
	\begin{equation}\label{eq:Poisson equation}
		\Delta h = 2 \pi \sum_{i=1}^n k_i (\delta_{p_i} + \delta_{\tau(p_i)}) + 2 \pi \sum_{j=1}^8 (2m_j-4) \delta_{q_i}, 
	\end{equation}
	on $\mathbb{T}_\Lambda$. The condition \ref{eq:necessary condition for Laplace equation} implies that such an $h$ exists. Furthermore, the fact that $h$ is harmonic away from the punctures and the integrality of the periods of $ \frac{1}{2\pi}\ast_E d h$ implies that there is a circle bundle $\pi : P \to T$ equipped with a connection $\theta \in \Omega^1(T, \mathbb{R})$ such that
	$$d \theta = - \ast_E d h.$$
	Next, one considers the function $h_\epsilon=1+\epsilon h$ and the metric 
	\begin{equation}\label{eq:g_epsilon^{gh}}
		g_\epsilon^{gh}:= \epsilon^2 h_\epsilon^{-1} \theta^2 + h_\epsilon g_{E},
	\end{equation}
	which, by Lemma 4.9 in \cite{Foscolo1}, is well defined, as a symmetric $2$-tensor, in the total space of $P$, and positive definite away from the pre-image of small radius $\sim \epsilon$ around the points $q_1, \ldots , q_8$ and $p_1, \ldots , p_n, \tau(p_1), \ldots , \tau(p_n)$. The $\mathbb{Z}_2$-action defined by $\tau$ lifts to the total space of $P$ and leaves $g_\epsilon^{gh}$ invariant. Then, Foscolo constructs an approximate hyperk\"ahler metric $g_\epsilon$ on a manifold $X$ obtained from $P/\mathbb{Z}_2$ by gluing any initially fixed $D_{m_j}$ ALF gravitational-instantons to the points $q_j$ for $j=1, \ldots , 8$, and any ALF gravitational-instantons of type $A_{k_i-1}$ to the points $p_i$ for $i=1, \ldots , n$. This approximately hyperK\"ahler metric has the property that $g_\epsilon=g_\epsilon^{gh}$ in the open set $K_\epsilon$ defined as the complement of the pre-image in $P$ of radius $\sim \epsilon^{2/5}$ balls around the points $\lbrace q_1 , \ldots , q_8, p_1, \ldots , p_n , \tau(p_1) , \ldots , \tau(p_n) \rbrace$. 
	
	For sufficiently small $\epsilon \leq \epsilon_0$, these metrics are then perturbed to obtain a hyperk\"ahler metric $g_\epsilon^{hk}$. As a Riemannian manifold, $X_\epsilon=(X, g_\epsilon^{hk})$ can be decomposed in the following manner
	\begin{equation}\label{eq:decomposition of X epsilon}
		X_\epsilon = K_\epsilon \cup  \bigcup_{j=1}^8 M^j_\epsilon \cup \bigcup_{i=1}^n N^i_\epsilon.
	\end{equation} 
	Furthermore, these hyperk\"ahler metrics $g_\epsilon^{hk}$ on $X$ satisfies the following properties:
	\begin{enumerate}
		\item In the large open set $K_\epsilon$, $g_\epsilon^{hk}$ approximates $g_\epsilon$ at the following rate
		\begin{equation}\label{eq:metric estimate}
			\sup_{K_\epsilon} | g_\epsilon^{hk} - g_\epsilon |_{g_\epsilon} \leq \epsilon^{\frac{11-2\delta}{5}},
		\end{equation}
		for some fixed $\delta \in (-1/2,0)$ a-priori fixed. Furthermore, by elliptic regularity, see the discussion at the end of the proof of Theorem 6.15 in \cite{Foscolo1}, a similar bound holds for all derivatives of the metrics $g_\epsilon^{hk}$ and $g_\epsilon$.
		
		\item For each $j=1, \ldots , 8$, the rescaled metrics $\epsilon^{-2}g_\epsilon^{hk}$ restricted to $M_\epsilon^j$ uniformly converge, with as many derivatives as necessary, to the initially fixed ALF gravitational instanton of type $D_{m_j}$.
		
		\item For each $i=1, \ldots , n$, the rescaled metrics $\epsilon^{-2}g_\epsilon^{hk}$ restricted to $N_\epsilon^i$ uniformly converge, with as many derivatives as necessary, to the initially fixed ALF gravitational instanton of type $A_{k_i-1}$.
	\end{enumerate}

%

	\section{Atiyah--Hitchin manifold}\label{sec:D0}
	
	One of the building blocks for Foscolo's K3 surfaces are $D_m$ ALF gravitational-instantons. Therefore, in order to construct closed geodesics on Foscolo's K3 surfaces it is useful to understand the existence of these on such gravitational instantons. To the author's knowledge, this is largely unexplored ground.\footnote{It seems to the author that the techniques employed in this article may be used to construct closed geodesics on more general $D_m$ ALF gravitational-instantons by making use of a recent construction in \cite{SS}.} For this reason, in this section we shall simply restrict to the case of $D_0$ which consists of the Atiyah--Hitchin manifold.
	
	In \ref{ss:AH nonexistence} we use the Bourguignon--Yau result in order to prove that there are no stable closed geodesics on the Atiyah--Hitchin manifold. Then, in \ref{ss:AH existence}, we review the results of \cite{Montgomery} which construct a family of closed geodesics on the Atiyah--Hitchin manifold.

	\subsection{Non-existence of stable closed geodesics}\label{ss:AH nonexistence}
	
	As already mentioned in \ref{ss:stability}, it follows from a result of Bourguignon--Yau in \cite{BY}\footnote{See also \cite{Chen} which is very concise and straight to the point.} that along any closed geodesic which is stable to second order in a hyperK\"ahler $4$-manifold, the full Riemann curvature must vanish. Hence, in order to show that no such geodesics exist for a given hyperK\"ahler $4$-manifold, we need only show that the sectional curvatures never vanish at the same point. 
	
	Following \cite{AH} and \cite{Tsai}, fix an orthonormal coframing $\lbrace \sigma_1, \sigma_2, \sigma_3 \rbrace$ of $SU(2)$ which satisfies $d \sigma_i= \sigma_j \wedge \sigma_k$ where $(i,j,k)$ denotes a cyclic permutation of $(1,2,3)$. Then, the Atiyah--Hitchin metric is written 
	$$g=dr^2 + a(r)^2 \sigma_1^2 +b(r)^2 \sigma_2^2+ c(r)^2 \sigma_3^2,$$
	where $r \in [0,+\infty)$ and $a,b,c:[0,+\infty)\to \mathbb{R}$ satisfy the ODE's
	$$\frac{da}{dr}=\frac{a^2-(b-c)^2}{2bc},$$
	and cyclic permutations of $(a,b,c)$. From this form of the metric, we can use Cartan's structure equations in order to compute expressions for the sectional curvatures of $g$, see for example section 2.4. of \cite{Tsai}. One finds that three of these are given by $\frac{a''}{a},\frac{b''}{b},\frac{c''}{c}$. It is shown in Lemma 10.10 of \cite{AH} that $a''(r) \leq 0$ with equality attained only at $r=0$. Hence, any stable closed must be contained in $r=0$. This consists of a minimal $\mathbb{RP}^2$ on which $SO(3)$, the isometry group of $g$, acts transitively. In fact, $SO(3)$ acts transitively in the unit circle bundle in $T\Sigma$ and so in the space of geodesics starting in $\Sigma$, tangent to $\Sigma$.  Hence, if there was to exist any closed geodesic of $g$ totally contained in $\Sigma$, then this $\mathbb{RP}^2$ would be totally geodesic, which is not the case.
	
	\begin{remark}
		In fact, we have that in any compact subset of the Atiyah--Hitchin manifold $|\mathrm{Riem}|$ can be bounded from below by a positive constant. Now consider Foscolo's K3 surfaces $X_\epsilon$ constructed using some $D_0$ gravitational instanton as $M^j$. By continuity, for sufficiently small $\epsilon>0$, the Riemann curvature tensor does not vanish in $M^j_\epsilon$ and so, no putative stable closed geodesic in $X_\epsilon$ can enter $M^j_\epsilon$.
	\end{remark}

	\subsection{Existence of closed geodesics}\label{ss:AH existence}
	
	Closed geodesics for the Atiyah-Hitchin metric do exist. One family of such geodesics is constructed in \cite{Montgomery}. Recall that the isometry group of the Atiyah--Hitchin manifold is $SO(3)$ and so its action must preserve the space of closed geodesics. The geodesics described in \cite{Montgomery} correspond to $1$-parameter subgroups of $SO(3)$ and so the moduli space of geodesics on which they live is diffeomorphic to $SO(3)/SO(2) \cong \mathbb{RP}^2$. The strategy of that reference is to make such $SO(3)$-orbits of closed geodesics on the Atiyah-Hitchin manifold $\cM_2$ correspond to the zeros of a vector field in the reduced space $T^*\cM_2/SO(3)$. Then, the authors show the existence of one such zero. Furthermore, according to the discussion in the last page of \cite{Montgomery}, the linearization of this vector field at one of its zeroes is nondegenerate and so any closed geodesic corresponding to it must have nullity equal to $\dim  \mathbb{RP}^2 =2$. 
	
	Finally, we mention in passing that the index of these closed geodesics is unknown to the author, but it must be at least one from the discussion in \ref{ss:AH nonexistence}. 

	\begin{remark}
	The following comments are unrelated to rest of this article, but are included here for completeness. 
		\begin{itemize}
			\item See also \cite{Wojtkowski} where quasiperiodic geodesics on the Atiyah--Hitchin manifold are constructed.
			\item There exist also closed geodesics on moduli spaces of higher charge monopoles and these have been constructed by Bielawski \cite{Bielawski}.
		\end{itemize}
	\end{remark}

	\section{Gibbons-Hawking manifolds}\label{sec:Gibbons-Hawking}
	
	Another  building block in Foscolo's construction of Ricci-flat K\"ahler metrics on K3 surfaces are ALF gravitational-instantons of type $A_k$. Contrary to the case of $D_m$, a lot can be easily discovered about closed geodesics on these by making use of the Gibbons--Hawking ansatz to explicitly write down the corresponding hyperK\"ahler metrics. In fact, this will be one of our main sources of examples and it will be crucial to constructing geodesics on the regions $N^i_\epsilon$, for $i=1, \ldots , n$, but also in the large open set $K_\epsilon$.
	
	In \ref{ss:GH ansatz} and \ref{ss:geodeics GH} we review the Gibbons--Hawking ansatz and use this to construct closed geodesics on ALF gravitational instantons of type $A_k$. In \ref{ss:stable geodesics GH} we compute their Riemann curvature tensor. This is an extremely useful computation which will find several applications in our work. First, it is used in here to give some non-existence results for stable closed geodesics, and to give examples of closed geodesics which are stable to second order, but overall unstable. Secondly, it will be used later in Section \ref{sec:Jacobi} to compute the Jacobi operator of the geodesics we will find.

	\subsection{Gibbons--Hawking ansatz}\label{ss:GH ansatz}
	
	The Gibbons--Hawking ansatz is a beautiful way of expressing all complete hyperK\"ahler metrics with circle symmetry in an explicit way. This construction dates back to \cite{GH1,GH2} and we recommend those references for more details on this construction. For a short more complete summary than that reviewed here we also recommend \cite{Lotay}.
	
	Let $m \geq 0$, $k \in \mathbb{N}_0$, $\lbrace P_1 , \ldots , P_k \rbrace$ a discrete subset of $\mathbb{R}^3$ and
	$$\phi (x) = m + \sum_{i=1}^k \frac{1}{2|x-P_i|}.$$
	Recall that the metric in the Gibbons--Hawking ansatz given by
	\begin{equation}\label{eq:metric GH}
		g=\phi^{-1} \eta^2 + \phi g_E,
	\end{equation}
	where $g_E$ is the pullback of the Euclidean metric on $\mathbb{R}^3$ to $X$ via $\pi:X \to \mathbb{R}^3$, which is a circle bundle away from $\lbrace P_1 , \ldots , P_k \rbrace$ on which $\eta$ is a connection form satisfying 
	$$d\eta = - \ast_E d \phi.$$
	In fact, notice from this equation that $\phi$ determines both $X$ and $\eta$ and thus $g$. Indeed, $\Delta_{g_E}\phi= 2\pi \sum_{i=1}^k \delta_{P_i}$ and so $\frac{1}{2\pi}\ast_E d \phi$ is closed in $\mathbb{R}^3 \backslash \lbrace 0 \rbrace$ and has integral periods, thus being the curvature of a connection $\eta$ on a circle bundle $\hat{\pi}:\hat{X} \to \mathbb{R}^3 \backslash \lbrace 0 \rbrace$. Then $X$ is obtained from $\hat{X}$ by adding a point above each of the points $P_i$ with the local model around each of these points being the radially extended Hopf bundle. This yields $\pi:X \to \mathbb{R}^3$.

	\subsection{Existence and location of closed geodesics}\label{ss:geodeics GH}
	
	The existence of closed geodesics on gravitational instantons of type $A_k$ was investigated in \cite{Lotay} and we shall summarize here its main results.
	
	For the hyperk\"ahler metric $g=\phi^{-1} \theta^2 + \phi g_E$, the length of a geodesic orbit is given by the function $l(x)=\phi(x)^{-\frac{1}{2}}$, which vanishes at any points where the circle action collapses and we can regard as a function on $\mathbb{R}^3$. The first observation made in \cite{Lotay} is that geodesic orbits on $(M,g)$ are the pre-images of the critical points of $\phi$. It swiftly follows that if $\phi$ has at least two singularities then $\phi$ has at least one critical point and any such lies in the convex hull of the singularities of $\phi$. A particularly interesting and easy example is when all the $k$ singularities lie in a line. In this situation it is shown that there are precisely $k-1$ critical points all of which lying in a line between pairs of adjacent singular points. The key result regarding geodesic orbits proven in \cite{Lotay} is Theorem 3.6 in that reference which generalizes the example were all singularities are colinear. For completeness we shall recollect here that result.
	
	\begin{theorem}\label{thm:GH ALF A_k}
		Let $X$ be an hyperk\"ahler 4-manifold constructed using the Gibbons-Hawking anstaz with $\phi$ having $k\geq 2$ singularities. Then, for the generic arrangement of the singularities, there are at least $k-1$ geodesic orbits. Furthermore, there is $m_0 \geq 0$ such that for all $m \geq m_0$, these $k-1$ geodesics can be chosen to be of index $1$.
	\end{theorem}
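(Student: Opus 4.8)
The plan is to transfer the whole question onto the Euclidean potential $\phi$ on $\mathbb{R}^3$, using the fact recalled just above that the geodesic orbits of $(X,g)$ are precisely the circle fibres lying over the critical points of $\phi$, and that the fibre over $x$ has length $2\pi\phi(x)^{-1/2}$ and hence energy $\cE(x)=2\pi\phi(x)^{-1}$. Under this dictionary the existence statement becomes a count of critical points of the harmonic function $\phi$ on $\mathbb{R}^3\setminus\{P_1,\dots,P_k\}$, and the index statement becomes an analysis of the second variation of energy along the corresponding fibre.

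For the existence of at least $k-1$ orbits I would run a Poincar\'e--Hopf degree count for the gradient field $v=\nabla\phi$. Because $\phi$ is harmonic away from the $P_i$, the Hessian $\Hess\phi$ is trace-free, so every nondegenerate critical point is a saddle of Morse index $1$ or $2$ and contributes to $v$ an index $(-1)^{\text{Morse index}}\in\{-1,+1\}$. On the boundary of the region $\Omega=B_R\setminus\bigcup_i B_\delta(P_i)$ the field is nonvanishing once $R$ is large and $\delta$ small: on the outer sphere $v\sim-\tfrac{k}{2}\,x/|x|^3$ points inward, contributing degree $-1$, while near each $P_i$ the field points toward $P_i$ and, with the induced boundary orientation of the small spheres, contributes $+1$ each. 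Hence the total index of the interior zeros is $-1+k=k-1$. For a generic arrangement of the $P_i$ every critical point is nondegenerate, so the number of critical points is at least $|k-1|=k-1$; this is sharp in the collinear case, where one finds exactly $k-1$ nondegenerate critical points, all of Morse index $2$.

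For the index of the geodesics I would first use $\cE(x)=2\pi\phi(x)^{-1}$: at a critical point of $\phi$ the Hessian of $\cE$ along the three-parameter family of nearby fibres is $-2\pi\phi^{-2}\,\Hess\phi$, whose number of negative directions equals the number of positive eigenvalues of $\Hess\phi$, namely $3-\mu$ where $\mu$ is the Morse index of $\phi$ at the point. This already exhibits $3-\mu$ negative directions of the second variation. To see these are all of them for $m\geq m_0$, I would parametrise the fibre $\gamma$ by arc length and Fourier-decompose the normal fields: the fibre collapses, $L=2\pi\phi^{-1/2}\sim 2\pi m^{-1/2}$, so the lowest nonconstant mode of $-D^2/dt^2$ has eigenvalue $\sim m$, whereas the curvature term $\mathrm{Riem}(\dot\gamma,\cdot)\dot\gamma$ along $\gamma$ tends to $0$ (near $x_0$ the metric approaches the flat $m^{-1}\eta^2+m\,g_E$, computed explicitly in \ref{ss:stable geodesics GH}). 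Thus every nonconstant mode is strictly positive and drops out of the index, while on the constant modes the Jacobi operator reduces to one proportional to $-\Hess\phi$, matching the energy computation. Hence for $m\geq m_0$ the index is exactly $3-\mu$, and choosing the $P_i$ collinear (or a small generic perturbation, which preserves nondegeneracy and the index of each critical point) forces $\mu=2$ at all $k-1$ critical points, yielding $k-1$ geodesics of index $1$.

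The main obstacle I expect is precisely this upper bound on the index as $m\to\infty$: producing $3-\mu$ negative directions is elementary, but ruling out any further ones requires quantitative control of the Jacobi operator as the fibre degenerates, ensuring that the vanishing curvature term cannot overcome the growing $\sim m$ spectral gap of the nonconstant modes, and that the constant-mode part is genuinely governed by the nondegenerate $\Hess\phi$. This is the same collapsing-fibre difficulty flagged for $K_\epsilon$ in the introduction, encountered here in its simplest model; carrying it out rests on the explicit curvature formula of \ref{ss:stable geodesics GH} together with the Fourier decomposition of normal fields along the circle, and it is what pins the genericity hypothesis (nondegeneracy of the critical points) and the threshold $m_0$ to their roles.
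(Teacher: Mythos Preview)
Your proposal is essentially correct and follows the same two-step strategy as the paper: reduce to critical points of $\phi$ on $\mathbb{R}^3$, then analyse the Jacobi operator along the corresponding circle fibres via Fourier decomposition. The paper cites the Morse-theoretic count in \cite{Lotay} for the existence of at least $k-1$ critical points of Morse index $2$, and then invokes Proposition~\ref{prop:Jacobi eigenvalues} (whose proof is exactly the Fourier argument you sketch: the $n$-th mode of $-D^2/dt^2$ contributes $n^2\phi \sim n^2 m$, while the curvature term is $(2\phi^2)^{-1}\Hess\phi = O(m^{-2})$, so only constant modes survive and the index is $3-\mathrm{index}\,\Hess_p\phi$). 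Your Poincar\'e--Hopf degree count is a minor variant of the Morse argument and gives the same information: writing $a,b$ for the numbers of nondegenerate critical points of Morse index $1,2$, you obtain $b-a=k-1$, hence $b\ge k-1$.

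One correction: your final move of specialising to collinear configurations is unnecessary and, as written, weakens the conclusion. You have already shown $b\ge k-1$ for \emph{every} generic configuration, so there are always at least $k-1$ Morse-index-$2$ critical points, each yielding an index-$1$ geodesic once $m\ge m_0$. Restricting to (perturbations of) the collinear case would only establish the theorem for a special family. Simply drop that sentence and conclude directly from $b\ge k-1$ together with the index formula.
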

	\begin{proof}
		The first part of this result follows from a Morse theory argument and we redirect the reader to the original reference \cite{Lotay} where it is proven that there are at least $k-1$ geodesics of index at least $1$. Here, we claim that $k-1$ of these have index exactly $1$. This follows from the fact that each such geodesic $\gamma=\pi^{-1}(p)$ corresponds to a critical point $p$ of $\phi$ and the Morse theoretic argument shows the existence of at least $k-1$ critical points of index $2$. Then, in Proposition \ref{prop:Jacobi eigenvalues} below we shall prove the existence of $m_0$ such that if $m \geq m_0$, then $\mathrm{index}(\gamma)=3-\mathrm{index} \Hess_p \phi$. Hence, in such a situation, any index $2$ critical point of $\phi$ corresponds to an index $1$ closed geodesic of $\phi$. 
	\end{proof}

	\begin{remark}\label{rem:rescaled statement 0}
	In view of remark \ref{rem:rescaled statement}, we can trade the hypothesis that $m \geq m_0$ in the statement of this result by the hypothesis that $|P_i-P_j| \geq d_0$ for some $d_0$ and all $i \neq j$.
	\end{remark}
	
	Notice also that the case when all the singularities of $\phi$ are colinear leads to the minimum number of allowed geodesic orbits as guaranteed by the previous result. It turns out that it is possible to find explicit examples whose number of geodesic orbits is greater than this minimum. The example provided in Proposition 3.7 of \cite{Lotay} which we shall restate here for convenience. See also figure \ref{fig:triangle} above and \ref{fig:triangle second} below which provide visual representations of these geodesics.
	
	\begin{proposition}\label{prop:triangle}
		Let $(X,g)$ be an hyperK\"ahler manifold constructed using the Gibbons--Hawking ansatz with $\phi$ having exactly $3$ singularities located at the vertices of an equilateral triangle. Then, for $m \geq m_0$, $g$ admits exactly $4$ geodesic orbits, one having index $2$ with the remaining three having index $1$.
	\end{proposition}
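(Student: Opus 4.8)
The plan is to reduce the whole statement to a study of the critical points of the potential $\phi(x) = m + \sum_{i=1}^3 \tfrac{1}{2|x-P_i|}$, using two ingredients already in place. First, by the correspondence recalled in Subsection \ref{ss:geodeics GH}, geodesic orbits of $(X,g)$ are exactly the circle fibers over critical points of $\phi$. Second, for $m \geq m_0$ the relation $\mathrm{index}(\gamma) = 3 - \mathrm{index}(\Hess_p\phi)$ from Proposition \ref{prop:Jacobi eigenvalues} converts the Morse index of $\phi$ at a critical point $p$ into the index of the corresponding geodesic. So I would take from Proposition 3.7 of \cite{Lotay} (which this proposition restates) the purely electrostatic fact that the equilateral configuration produces \emph{exactly four} critical points of $\phi$, and devote the argument to classifying their Hessian indices, which is the genuinely new input here and what pins down the geodesic indices.

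First I would record two decisive elementary facts. Since $\phi$ is harmonic off the singular set, $\tr \Hess_p\phi = 0$ at every critical point $p$. Moreover the configuration is invariant under the reflection $\sigma$ across the plane $\Pi$ containing the triangle, so every critical point lies in $\Pi$ and, at such a point, the unit normal $\nu$ is an eigendirection of $\Hess_p\phi$ (the mixed normal--tangential second derivatives vanish by symmetry). A one-line differentiation shows $\partial_\nu^2 \phi = -\sum_{i=1}^3 \tfrac{1}{2|x-P_i|^3} < 0$ at every point of $\Pi$, so $\nu$ is always a negative eigendirection. Combined with $\tr\Hess_p\phi = 0$, this forces the two in-plane eigenvalues to have strictly positive sum; hence each critical point is either an in-plane minimum of $\phi|_\Pi$, whose $3\times 3$ Hessian then has index $1$ (only the $\nu$ direction is negative), or an in-plane saddle, whose Hessian has index $2$, and an in-plane maximum is impossible.

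Next I would count the two types. A Poincar\'e--Hopf computation for $\nabla\phi$ on the thrice-punctured sphere obtained by compactifying $\Pi$ and excising small disks about the three charges, whose Euler characteristic is $-1$ and on whose boundary circles $\nabla\phi$ points outward (toward the charges), gives the Morse relation $c_1 = c_0 + 2$ between the numbers $c_0$ of in-plane minima and $c_1$ of in-plane saddles. Feeding in the total count $c_0 + c_1 = 4$ from \cite{Lotay} forces $c_0 = 1$ and $c_1 = 3$; the unique minimum is the centroid, as one checks directly since there the $\mathbb{Z}_3$-symmetry makes the in-plane Hessian a positive scalar matrix. Applying $\mathrm{index}(\gamma) = 3 - \mathrm{index}(\Hess_p\phi)$ then yields one geodesic of index $3-1 = 2$ over the centroid and three geodesics of index $3-2 = 1$ over the saddles, which is exactly the assertion.

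The hard part will be the exact count of four critical points: the Poincar\'e--Hopf relation by itself only delivers $c_1 - c_0 = 2$ and does not rule out additional (necessarily symmetric) families of minima, and this is precisely the delicate electrostatic fact---of a flavour governed by Maxwell's conjectural bound $(\ell-1)^2 = 4$ for $\ell = 3$ charges---that is established in Proposition 3.7 of \cite{Lotay}, on which I would rely rather than reprove. Everything else is routine: the sign of $\partial_\nu^2\phi$, the scalar form of the Hessian at the centroid, and the saddle structure of the remaining three points all follow from the symmetry and harmonicity above, and the only analytic caveat is that the index identity is valid only for $m \geq m_0$, which is the hypothesis of the proposition.
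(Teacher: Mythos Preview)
Your proof is correct and follows the paper's approach---cite Proposition~3.7 of \cite{Lotay} for the exact count of four critical points, then apply Proposition~\ref{prop:Jacobi eigenvalues} to convert Hessian indices to geodesic indices---while additionally supplying the justification (via harmonicity, reflection symmetry across the plane of the triangle, and a Poincar\'e--Hopf count) for the step the paper merely asserts, namely that one critical point has Hessian index~$1$ and three have index~$2$. The only wrinkle is that $\nabla\phi$ does not extend smoothly to the added point at infinity on your compactified plane (in inverted coordinates $\phi - m \sim \tfrac{3}{2}|u|$ there), so you should either excise a fourth disk about~$\infty$ and track its boundary contribution, or first smooth $\phi$ near infinity to a genuine local minimum; either fix leaves the relation $c_1 = c_0 + 2$ intact and your conclusion unchanged.
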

	\begin{proof}
		Again the proof is given in Proposition 3.7. \cite{Lotay} except for the statement that regarding the resulting index of these geodesics. The statement regarding the index follows from the fact that such geodesics correspond to critical points of $\phi$ with three of them having index $2$ and one index $2$. Again, by proposition \ref{prop:Jacobi eigenvalues} below, the index of the resulting geodesics gets reversed and we obtain the stated result.
	\end{proof}

	\begin{figure}[htp]\label{fig:triangle second}
		\centering
		\includegraphics[width=0.6\textwidth,height=0.35\textheight]{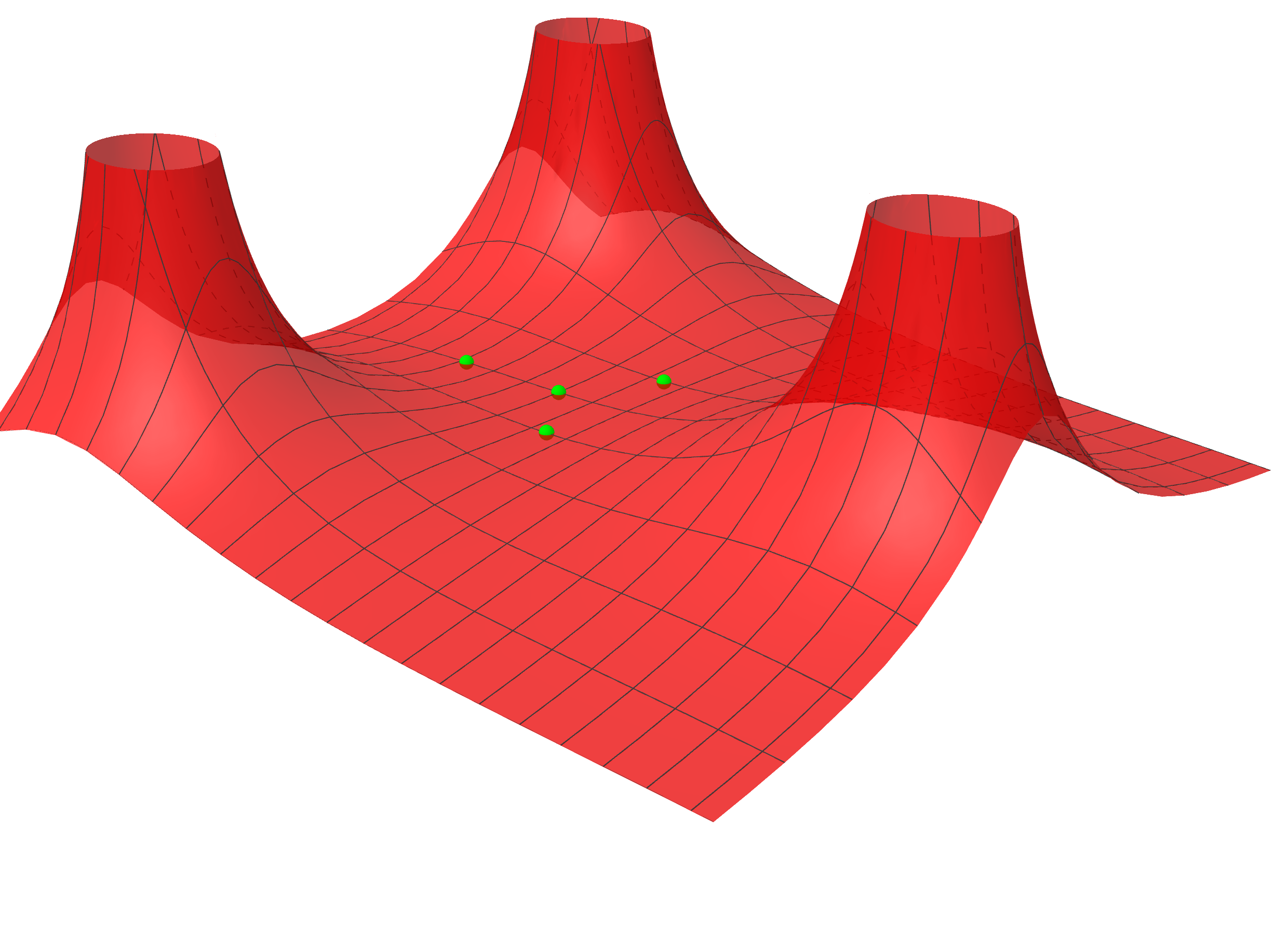}
		\caption{Plot of the potential $\phi$ restricted to the plane containing the triangle. The four critical points are inserted on top of the potential for easier visualization. They correspond to the geodesics from proposition \ref{prop:triangle} with the central one giving rise to the index two geodesic while the remaining three give rise to index one geodesics.}
	\end{figure}

	\subsection{Stable closed geodesics}\label{ss:stable geodesics GH}
	
	It was proven in \cite{Trinca} that there are no strictly stable geodesics on the hyperK\"ahler manifolds constructed using the Gibbons--Hawking ansatz. The argument is quite simple. By circle symmetry, any strictly stable geodesic would need to be circle-invariant, as otherwise it would have nonzero nullity. Hence, any such geodesic circle orbit would have to correspond to a local maximum of $\phi$ (minimum of $\ell$) which is impossible as $\phi$ is harmonic. 
	
	However, it turns out that it is possible to find infinitely many closed geodesics which are stable to second order, but of course not strictly stable. These have nullity equal to $3$, are unstable to higher order and we provide examples below, see for instance example \ref{ex:second order stable geodesic}.
	
	We start by computing the Riemann curvature tensor of the hyperk\"ahler metrics under consideration.

	\subsubsection*{Connection forms}
	
	Consider the orthonormal coframing $\lbrace e^{a} \rbrace_{a=0}^3$ given by
	\begin{equation}\label{eq: coframing GH R3}
		e^0=\phi^{-1/2} \eta\quad\text{and}\quad e^i=\phi^{1/2} d\mu_i
	\end{equation} 
	for $i=1,2,3$. Using Einstein's summation convention we have connection $1$-forms $\omega^a_b$ satisfying Cartan's structure equations
	\begin{equation}\label{eq:Cartan structure}
		d e^a = e^b \wedge \omega^a_b  , \ \ \  \omega^a_b + \omega^b_a =0.
	\end{equation}
	using the monopole equation 
	$$d\eta = -\frac{1}{2}\epsilon_{ijk} \frac{\partial\phi}{\partial \mu_i} d\mu_j\wedge d\mu_k,$$
	we deduce that
	\begin{align}\label{eq:connection_form_1}
		\omega^k_j & = \frac{1}{2\phi^{3/2}} \left( \frac{\partial \phi}{\partial \mu_j} e^k - \frac{\partial \phi}{\partial \mu_k} e^j - \epsilon_{ijk} \frac{\partial \phi}{\partial \mu_i} e^0  \right), \\ \label{eq:connection_form_2}
		\omega^0_i & =  \frac{1}{2\phi^{3/2}} \left( - \frac{\partial \phi}{\partial \mu_i} e^0 + \epsilon_{ijk} \frac{\partial \phi}{\partial \mu^j} e^k  \right).
	\end{align}

	\subsubsection*{Curvature Forms}
	
	Using the harmonicity of $\phi$ we may then compute the curvature forms 
	$$\Omega_{\mu}^\nu = d \omega_{\mu}^\nu + \omega_{\mu}^\kappa \wedge \omega_\kappa^{\nu}.$$
	This gives
	\begin{align}\label{eq:Omega 0i}
		\Omega_{0}^i & = - \frac{\phi_{ii}\phi - 2 \phi_i^2 +\phi_j^2+\phi_k^2 }{2\phi^3} (e_0 \wedge e_i - e_j \wedge e_k) + \\ \nonumber
		& \ \ \ \ - \frac{\phi_{ij}\phi - 3 \phi_i \phi_j}{2 \phi^3} (e_0 \wedge e_j - e_k\wedge e_i) - \frac{\phi_{ik}\phi - 3 \phi_i \phi_k}{2 \phi^3} (e_0 \wedge e_k - e_i\wedge e_j) 	\\ \label{eq:Omega jk}
		\Omega_{j}^k & =- \frac{\phi_{ii}\phi + \phi_j^2 + \phi_k^2 - 2 \phi_i^2}{2\phi^3} (e_0 \wedge e_i - e_j \wedge e_k) - \\ \nonumber
		& \ \ \ \ - \frac{\phi_{ij}\phi - 3 \phi_i \phi_j}{2\phi^3} (e_0 \wedge e_j - e_k \wedge e_i) - \frac{\phi_{ik}-3\phi_i \phi_k}{2 \phi^3} (e_0 \wedge e_k - e_i \wedge e_j),
	\end{align}
	where $(i,j,k)$ denotes a cyclic permutation of $(1,2,3)$ and in these equations \emph{no sum} is implicit over repeated indices. We therefore have
	$$|\mathrm{Riem}|^2 = \sum_{i=1}^3 2|\Omega_{0}^i|^2 + \sum_{i,j=1}^3 |\Omega_{i}^j|^2 , $$
	and using our formulas above gives
	\begin{align*}
		|\mathrm{Riem}|^2 & = \frac{1}{\phi^6}\sum_{i=1}^3 \left(  (\phi_{ii}\phi - 2 \phi_i^2 +\phi_j^2+\phi_k^2)^2 + (\phi_{ij}\phi - 3 \phi_i \phi_j)^2 + (\phi_{ik}\phi - 3 \phi_i \phi_k)^2  \right) + \\
		& \ \ \ \ + \frac{1}{\phi^6} \sum_{k=1}^3 \left( (\phi_{kk}\phi + \phi_i^2 + \phi_j^2 - 2 \phi_k^2)^2 + (\phi_{ki}\phi - 3 \phi_k \phi_i)^2 + (\phi_{kj}\phi -3\phi_k \phi_j)^2 \right) \\
		& = \frac{2}{\phi^6} \sum_{i=1}^3  \left(  (\phi_{ii}\phi - 2 \phi_i^2 +\phi_j^2+\phi_k^2)^2 + (\phi_{ij}\phi - 3 \phi_i \phi_j)^2 + (\phi_{ik}\phi - 3 \phi_i \phi_k)^2  \right) .
	\end{align*}
	Furthermore, use the splitting $\mathfrak{so}(4)=\mathfrak{su}(2)^+ \oplus \mathfrak{su}(2)^-$ we cab write the Riemann curvature tensor as
	\begin{eqnarray}\nonumber
		\mathrm{Riem} & = & \Omega_{\mu}^{\nu} e^\mu \otimes e^{\nu} \\ \nonumber
		& = & \sum_{i=1}^3\Omega_0^i e^0 \wedge e^i + \sum_{j <k=1}^3\Omega_j^k e^j \wedge  e^k \\ \nonumber
		& = & \sum_{i=1}^3\Omega_0^i \frac{\omega_i + \overline{\omega_i}}{2} + \sum_{j <k=1}^3\Omega_j^k \frac{\overline{\omega_i} - \omega_i}{2} 	\\ \nonumber
		& = & \underbrace{ \frac{1}{2} \sum_{i=1}^3 (\Omega^i_0 - \Omega^k_j) \otimes \omega_i }_{\mathrm{Riem}^+}  +  \underbrace{ \frac{1}{2} \sum_{i=1}^3 (\Omega^i_0 + \Omega^k_j) \otimes \overline{\omega_i}  }_{\mathrm{Riem}^-} ,
	\end{eqnarray}
	where $(i,j,k)$ is always a cyclic permutation of $(1,2,3)$. Then, using equations \ref{eq:Omega 0i} and \ref{eq:Omega jk}, we find that $\mathrm{Riem}^+$ vanishes, while $\mathrm{Riem} = \mathrm{Riem}^{-}$ with
	\begin{align*}
		\mathrm{Riem}^- & = - \sum_{i=1}^3 \left( \frac{\phi_{ii}\phi - 2 \phi_i^2 +\phi_j^2+\phi_k^2 }{2\phi^3} \ \overline{\omega_i}   + \frac{\phi_{ij}\phi - 3 \phi_i \phi_j}{2 \phi^3} \ \overline{\omega_j}  + \frac{\phi_{ik}\phi - 3 \phi_i \phi_k}{2 \phi^3} \ \overline{\omega_k} \right) \otimes \overline{\omega_i}  ,
	\end{align*}
	or in matrix form viewing $\mathrm{Riem}^-$ as an endomorphism of $\Lambda^2_-$.
	\begin{eqnarray}\label{eq:W matrix}
		-\mathrm{Riem}^- & = & \begin{pmatrix}
			\frac{\phi_{11}\phi - 2 \phi_1^2 +\phi_2^2+\phi_3^2 }{2\phi^3} & \frac{\phi_{12}\phi - 3 \phi_1 \phi_2}{2 \phi^3} & \frac{\phi_{13}\phi - 3 \phi_1 \phi_3}{2 \phi^3} \\
			\frac{\phi_{21}\phi - 3 \phi_2 \phi_1}{2 \phi^3} & \frac{\phi_{22}\phi - 2 \phi_2^2 +\phi_3^2+\phi_1^2 }{2\phi^3} &
			\frac{\phi_{23}\phi - 3 \phi_2 \phi_3}{2 \phi^3} \\ \frac{\phi_{31}\phi - 3 \phi_3 \phi_1}{2 \phi^3} &
			\frac{\phi_{32}\phi - 3 \phi_3 \phi_2}{2 \phi^3} &
			\frac{\phi_{33}\phi - 2 \phi_3^2 +\phi_1^2+\phi_2^2 }{2\phi^3}
		\end{pmatrix} ,
	\end{eqnarray}

	\begin{example}[Taub-Nut]
		In this case $\phi=m + \frac{1}{2 r}$ with $r=\sqrt{x^2+y^2+z^2}$. Inserting this into equation \ref{eq:W matrix} We compute 
		$$	-\mathrm{Riem}^- = \begin{pmatrix}
			\frac{2m (2x^2-y^2-z^2)}{r^2 (1+2mr)^3} & \frac{6m xy}{r^2 (1+2mr)^3} & \frac{6m xz}{r^2 (1+2mr)^3} \\
			\frac{6m yx}{r^2 (1+2mr)^3} & \frac{2m (2y^2-z^2-x^2)}{r^2 (1+2mr)^3} & \frac{6m yz}{r^2 (1+2mr)^3} \\
			\frac{6m zx}{r^2 (1+2mr)^3} & \frac{6m zy}{r^2 (1+2mr)^3} & \frac{2m (2z^2-x^2-y^2)}{r^2 (1+2mr)^3} 
		\end{pmatrix},$$
		and
		$$|	\mathrm{Riem}|^2=\frac{24m^2}{(2mr+1)^6}.$$
		which never vanishes and so there are no stable closed geodesics for the Taub--Nut metric.
	\end{example}

	\begin{example}[Orbifold Eguchi-Hanson]\label{ex:Eguchi--Hanson}
		Suppose the case when $\phi= \frac{k_1}{2r_1}+\frac{k_2}{2r_2}$ where $r_1=\sqrt{x^2+y^2+(z-1)^2}$, $r_2=\sqrt{x^2+y^2+(z+1)^2}$ and $k_1,k_2 \in \mathbb{N}$. We compute that 
		$$|	\mathrm{Riem} |^2= \frac{96k_1^2k_2^2}{(k_1r_1+k_2r_2)^6},$$ 
		and again we conclude that no stable closed geodesics exist.
		
		The nonexistence of stable closed geodesics on the Eguchi-Hanson space is proven in \cite{Olsen} and \cite{Chen}. The approach of those references is also based on \cite{BY}, but does not use the Gibbons--Hawking anstaz, relying instead in writing the metric in cohomogeneity-$1$ form.
	\end{example}
	
	\begin{example}[ALF version of Eguchi-Hanson]
		This is the main example we will be interested in and we find that $\phi= m+\frac{k_1}{2r_1}+\frac{k_2}{2r_2}$ where $r_1=\sqrt{x^2+y^2+(z-1)^2}$, $r_2=\sqrt{x^2+y^2+(z+1)^2}$ and $k_1,k_2 \in \mathbb{N}$. We find that
		$$\frac{\phi_{12}\phi - 3 \phi_1 \phi_2}{2 \phi^3}   = \frac{6xy}{r_1^2 r_2^2} \frac{mk_1 r_1^5 + m k_2 r_2^5 + 8z^2 k_1 k_2}{(2mr_1r_2 +k_1r_1+k_2r_2)^3},$$
		which is always non-negative unless $xy=0$. Given the symmetry, we can with no loss of generality simply analyze the situation when $x=0$. In that situation we find that along $x=0$
		$$\frac{\phi_{11}\phi - 2 \phi_1^2 +\phi_2^2+\phi_3^2 }{2\phi^3} = -2 \frac{mk_1r_1^3+mk_2r_2^3+2k_1k_2}{(2r_1r_2+k_1r_1+k_2r_2)^3},$$
		which can never vanish and we therefore find that no geodesic, stable to second-order, exist in this ALF version of Eguchi-Hanson.
	\end{example}
	
	\begin{example}[A non-minimal geodesic which is stable to second order]\label{ex:second order stable geodesic}
		Consider the points $p_1=(1,0,0)$, $p_2=(0,1,0)$, $p_3=(0,0,1)$ and the harmonic function
		$$\phi = \sum_{i=1}^3 \left( \frac{1}{2|x-p_i|}+ \frac{1}{2|x+p_i|} \right).$$
		One finds that $\partial_i\phi=0=\partial_{ij}^2\phi$ at the origin for all $i,j=1, \ldots , 3$. Hence, the Riemann curvature tensor of $g$ vanishes at the circle obtained as the pre-image of this point. We therefore find that this geodesic is stable to second order. However, it cannot be strictly stable as $\phi$ is harmonic and so it cannot have a local maximum at the origin by the strong maximum principle.
	\end{example}

	\begin{example}\label{ex:Platonic}
		Suppose the singularities of $\phi$ are placed at the vertices of a platonic solid. Then, the circle in $X$ above the origin is a closed geodesic which is stable to second order.
	\end{example}
	
	These examples can be generalized in a very interesting way. For this, we shall consider sufficiently symmetric configurations of singularities of $\phi$. Let $\mathcal{H}_2 \subset \mathbb{R}[x,y,z]$ denote the subspace generated by the harmonic homogeneous polynomial of degree $2$ and $G \subset O(3, \mathbb{R})$ denote the group of symmetries of the singularities of $\phi$. This group acts on $ \mathbb{R}[x,y,z]$ by precomposition and a computation shows that this action does preserve $\mathcal{H}_2$	. We shall denote by $\mathcal{H}_2^G \subset \mathcal{H}_2$ the subspace consisting of the $G$-invariant elements.
	
	\begin{theorem}
		Suppose that $\phi$ has a critical point at the origin and that the symmetry group $G$ of the configuration of its singularities is such that $\mathcal{H}_2^G = 0$. Then, the origin is a critical point of $\phi$ with vanishing Hessian and so the circle above it in $X$ corresponds to a closed geodesic which is stable to second order. However, it is not strongly stable.
	\end{theorem}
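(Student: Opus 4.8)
The plan is to extract the vanishing of $\Hess_0\phi$ directly from the representation-theoretic hypothesis $\mathcal{H}_2^G=0$, feed this into the explicit curvature matrix \ref{eq:W matrix} to kill the curvature along the fiber over $0$, and then read off stability to second order from the resulting Jacobi operator; the failure of strong stability is the same maximum-principle argument recorded in \ref{ss:stable geodesics GH}. First I would note that, since the origin is a critical point, $\phi$ is finite there, so $0$ is none of the poles $P_i$ and $\phi$ is smooth and harmonic on a ball about $0$. Expanding into homogeneous Taylor pieces $\phi = \phi(0) + p_1 + p_2 + \cdots$ and comparing homogeneous components of $\Delta\phi = 0$ (using that $\Delta$ lowers degree by two) forces $\Delta p_d = 0$ for every $d$. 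The critical-point hypothesis gives $p_1 = \nabla\phi(0)\cdot x = 0$, and the quadratic piece $p_2(x) = \tfrac12\, x^{T}\,\Hess_0\phi\; x$ is therefore a \emph{harmonic} homogeneous polynomial of degree two, i.e. $p_2 \in \mathcal{H}_2$.

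Next comes the symmetry input. Each $g \in G \subset O(3)$ permutes the poles $\{P_i\}$ and preserves Euclidean distance, so $\phi(gx)=\phi(x)$; as $g$ is linear and fixes $0$, uniqueness of the Taylor expansion gives $p_d(gx)=p_d(x)$ for all $d$, whence $p_2 \in \mathcal{H}_2^G$. By hypothesis $\mathcal{H}_2^G=0$, so $p_2 \equiv 0$, that is $\Hess_0\phi=0$. This already establishes the first assertion, that the origin is a critical point with vanishing Hessian.

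With both $\nabla\phi(0)=0$ and $\Hess_0\phi=0$ in hand, I would substitute into \ref{eq:W matrix}: every entry has the shape $(\phi_{ab}\phi - c\,\phi_a\phi_b)/2\phi^3$ with $\phi(0)=m+\sum_i(2|P_i|)^{-1}>0$, so all entries vanish and $\mathrm{Riem}=0$ along the fiber $\gamma = \pi^{-1}(0)$. Since $0$ is a critical point of $\phi$, this fiber is a closed geodesic (see \ref{ss:geodeics GH}), and with $\mathrm{Riem}=0$ the Jacobi operator collapses to $J_\gamma(V) = \tfrac{D^2 V}{dt^2}$. Hence, by the computation in \ref{ss:length-energy-nullity-index}, the second variation of energy along $\gamma$ equals $\int_{S^1} |\tfrac{DV}{dt}|^2\,dt \geq 0$, so $\gamma$ has index zero and is stable to second order. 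Finally, a strongly (strictly) stable $\gamma$ would be a strict local minimum of the length $\ell = \phi^{-1/2}$ among the nearby circle fibers, hence a strict local maximum of the harmonic function $\phi$ at $0$; this is impossible by the strong maximum principle, so $\gamma$ is not strongly stable.

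The only genuinely substantive step is the middle one: recognizing the quadratic Taylor coefficient as a $G$-invariant degree-two solid harmonic and matching it against $\mathcal{H}_2^G$. Everything downstream — the vanishing of the curvature and both stability conclusions — is then forced by the explicit formula \ref{eq:W matrix} together with the second-variation identity already recorded in \ref{ss:length-energy-nullity-index}, and is essentially mechanical. The one point I would double-check carefully is that no derivatives of $\phi$ beyond second order enter \ref{eq:W matrix}, so that vanishing gradient and Hessian at $0$ genuinely suffice to annihilate the whole curvature endomorphism along the entire fiber.
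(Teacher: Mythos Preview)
Your proof is correct and follows essentially the same approach as the paper: expand $\phi$ near the origin into homogeneous harmonic pieces, use $G$-invariance to force the quadratic term into $\mathcal{H}_2^G=0$, deduce vanishing Hessian and hence vanishing curvature along the fiber via \ref{eq:W matrix}, and rule out strict stability by the maximum principle. You are somewhat more explicit than the paper in spelling out why $\phi$ is $G$-invariant and in deriving index zero from the collapsed Jacobi operator, but the argument is the same.
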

	\begin{proof}
		Given that $\phi$ has a critical point and thus no singularity at the origin it is real analytic in an open neighborhood of $0$ and so we can write it as a Taylor series
		$$\phi(x) = \sum_{m \geq 0} \phi_m(x) ,$$
		with each $\phi_m$ a harmonic homogeneous polynomial of degree $m$. Has $0$ is a critical point we must have $\phi_1=0$ and as $\phi_2 \in \mathcal{H}_2^G$ it also vanishes by hypothesis. Hence, as in Example \ref{ex:second order stable geodesic}, the curvature tensor of $g$ vanishes along the circle above the origin, thus proving that it corresponds to a geodesic stable to second order. Its instability follows from the fact that $\phi$ cannot have local minima and so there are nearby circle orbits $\pi^{-1}(x)$ with strictly smaller length $2\pi \phi(x)^{-1/2}$.
	\end{proof}

	Finally, as a corollary we prove Theorem \ref{thm:stable to second order INTRO} which we present here as a corollary.
	
	\begin{corollary}
		For infinitely many $k \in \mathbb{N}$, there are ALE and ALF gravitational instanton of type $A_k$ containing a closed geodesic which is unstable, but is stable to second order.
	\end{corollary}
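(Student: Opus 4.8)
The plan is to apply the theorem immediately preceding this corollary by exhibiting, for infinitely many $k$, a configuration of $k$ point charges in $\mathbb{R}^3$ whose symmetry group $G$ is large enough to force $\mathcal{H}_2^G=0$, while simultaneously guaranteeing that $\phi$ has a critical point at the origin. Recall $\mathcal{H}_2$ is the $5$-dimensional space of degree-two harmonic polynomials, on which $O(3,\mathbb{R})$ acts; it is well known that $\mathcal{H}_2$ is an irreducible representation of $SO(3)$ (the spin-$2$ or $\ell=2$ representation). Thus the whole point is to choose $G$ so that this irreducible $5$-dimensional representation contains no nonzero $G$-fixed vector. The most economical choice is to place the singularities at the vertices of a regular solid whose symmetry group acts on $\mathcal{H}_2$ with no invariants --- the icosahedral group $A_5$ (or the full icosahedral group $A_5\times\mathbb{Z}_2$) is the standard such example, since $\mathcal{H}_2$ restricted to the icosahedral group decomposes with no trivial summand.

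First I would fix one base configuration with the required symmetry: for instance the $12$ vertices of a regular icosahedron, giving $k=12$, with $G$ the (full) icosahedral symmetry group. Since $G$ permutes the charge locations and fixes the origin (the centroid), $\phi$ is $G$-invariant, hence $\nabla\phi$ is a $G$-equivariant vector field; as the only $G$-fixed vector in $\mathbb{R}^3$ is $0$ (the standard representation of the icosahedral group has no invariants), $\nabla\phi(0)=0$, so the origin is a critical point of $\phi$ with no singularity there. Then I would invoke the representation-theoretic fact that $\mathcal{H}_2^G=0$ for the icosahedral group, which is exactly the hypothesis of the preceding theorem. The theorem then delivers a closed geodesic, namely $\pi^{-1}(0)$, that is stable to second order but not strongly stable; this manifold is an ALF (resp.\ ALE, on taking $m=0$) gravitational instanton of type $A_{k-1}$ with $k=12$.

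To upgrade this to \emph{infinitely many} $k$, I would produce an infinite family of configurations preserving the same symmetry group. The cleanest route is to superimpose several $G$-orbits: take $N$ concentric icosahedra of distinct radii $r_1,\dots,r_N$, yielding $k=12N$ charges, all invariant under the same icosahedral $G$. Each such configuration again has the origin as a critical point of $\phi$ (by the same equivariance argument) and the same symmetry group $G$ with $\mathcal{H}_2^G=0$, so the theorem applies verbatim and produces the desired geodesic on an $A_{12N-1}$ instanton for every $N\in\mathbb{N}$. One may wish to perturb the radii generically to ensure the resulting geodesic is genuinely the $\ell=2$ degenerate one and not accidentally more degenerate, but since the theorem only needs $\phi_1=\phi_2=0$ at the origin --- which follows purely from $G$-invariance --- no delicate estimate is required.

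The main obstacle, and the only genuinely substantive point, is verifying the representation-theoretic claim $\mathcal{H}_2^G=0$ for the chosen group; everything else is a direct application of the preceding theorem together with the elementary equivariance argument for the critical point. For the icosahedral group this is standard: $\mathcal{H}_2$ is the irreducible $5$-dimensional $SO(3)$-representation, and its restriction to the icosahedral group $A_5$ is irreducible (it is the unique $5$-dimensional irreducible of $A_5$), hence in particular contains no trivial subrepresentation. I would present this verification either by citing the character table of $A_5$ and computing $\tfrac{1}{|G|}\sum_{g\in G}\chi_{\mathcal{H}_2}(g)=0$, or by the even simpler observation that any $G$-invariant quadratic form must be a multiple of $x^2+y^2+z^2$ (since $G$ acts irreducibly on $\mathbb{R}^3$), whose harmonic part is zero.
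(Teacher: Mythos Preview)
Your proposal is correct and follows essentially the same approach as the paper: build configurations from concentric copies of a highly symmetric polyhedron so that the preceding theorem applies at the common center, and observe that this produces infinitely many values of $k$. The paper's proof differs only in presentation: rather than invoking the theorem and verifying $\mathcal{H}_2^G=0$ via representation theory, it cites the explicit Examples (the octahedral configuration and the Platonic-solid configurations) where the vanishing of the Hessian at the center was checked directly, and then superimposes concentric Platonic solids of possibly different types, allowing any $k$ with $k+1$ partitionable into vertex counts of Platonic solids. Your Schur-lemma argument (any $G$-invariant quadratic form on an irreducible $\mathbb{R}^3$ is a multiple of $x^2+y^2+z^2$, hence has zero harmonic part) is a clean and more conceptual way to verify the theorem's hypothesis, and in fact shows that any Platonic solid---not just the icosahedron---would work; the paper's examples are special cases of this observation. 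One small wording point: you conclude ``stable to second order but not strongly stable,'' whereas the corollary asks for ``unstable''; the theorem's proof (via the maximum principle for $\phi$) actually gives genuine instability, so you may want to state that explicitly.
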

	\begin{proof}
		Let $k+1$ admit a partition all of whose entries are either $3$, $6$, $8$, $12$ or $20$. Then, we can consider a family of concentric Platonic solids and place charges at their vertices. By examples \ref{ex:second order stable geodesic} and \ref{ex:Platonic} the resulting potential $\phi$ will have vanishing Hessian at their common center. Then, the corresponding ALE and ALF $A_{k}$ gravitational instantons constructed using the Gibbons--Hawking ansatz have a closed geodesic which is stable to second order but overall unstable. This is the pre-image of the center by the hyperK\"ahler moment map.
	\end{proof}

	\section{Jacobi operators in the building blocks}\label{sec:Jacobi}
	
	The goal of this section is to compute the index of geodesic orbits for the metrics constructed via the Gibbons--Hawking ansatz. These results are stated in Propositions \ref{prop:Jacobi eigenvalues} and \ref{prop:Jacobi eigenvalues 2} depending on whether the base is Euclidean space or a $3$-torus respectively. These results also characterize when such geodesic orbits are non-degenerate.
	
	Finally, in Proposition \ref{prop:Jacobi}, we invert the Jacobi operator of a non-degenerate geodesic orbit for $g_\epsilon^{hk}$, and give an estimate on the norm of its inverse.
	
	\subsection{Formula for the Jacobi operator of a geodesic orbit}
	
	In this initial subsection we shall compute the Jacobi operator of a geodesic circle orbit with respect to Gibbons--Hawking type metrics.
	
	\subsubsection{With base $\mathbb{R}^3$}
	
	For the metrics as in \ref{eq:metric GH} and using the co-framing \ref{eq: coframing GH R3}, we can write a normal vector $V$ to a geodesic orbit $\gamma_0=\pi^{-1}(p)$ as $V=\sum_{i=1}^3 V_i e_i$. Then, we compute
	\begin{align*}
		\frac{D}{dt} \frac{DV}{dt} & = \frac{D}{dt} ( \dot{V_i} e_i +  V_i \nabla_{e_0} e_i ) \\
		& =  \ddot{V_i} e_i + 2 \dot{V_i} \nabla_{e_0} e_i +  V_i \nabla_{e_0}^2 e_i  \\
		& =  \ddot{V_i} e_i ,
	\end{align*}
	and using the fact that at a critical point $p$ of $\phi$ we have $\Omega_0^j(e_0,e_i) = - \frac{1}{2\phi^2} \frac{\partial^2 \phi}{\partial \mu_i \partial \mu_j}$, we have
	\begin{align*}
		\textrm{Riem}(\dot{\gamma_0} , V) \dot{\gamma_0}  & = V_i \langle R(e_0,e_i) e_0 , e_j \rangle e_j \\
		& = V_i\Omega_0^j(e_0,e_i) e_j \\
		& = - \frac{1}{2\phi^2} \frac{\partial^2 \phi}{\partial \mu_i \partial \mu_j} V_i e_j .
	\end{align*}
	Putting these together, we find that
	\begin{align}\nonumber
		J_{\gamma_0}(V) & = \frac{D}{dt} \frac{DV}{dt} - \textrm{Riem}(\dot{\gamma_0} , V) \dot{\gamma_0} \\ \label{eq:Jacobi operator}
		& = \left( \ddot{V_j} + \frac{1}{2\phi^2} \frac{\partial^2 \phi}{\partial \mu_i \partial \mu_j} V_i \right) e_j .
	\end{align}
	
	\subsubsection{With base $\mathbb{T}_\Lambda$}
	
	We now turn to the metrics $g_\epsilon^{gh}$ defined in \ref{eq:g_epsilon^{gh}}, which are of the form $\epsilon$ times a metric constructed using the Gibbons--Hawking ansatz with base a $3$-torus $\mathbb{T}_\Lambda$. Given that we have computed the curvature tensor for a Gibbons--Hawking metric, it is convenient to work with
	$$\epsilon^{-1} g_\epsilon^{gh}= \frac{\epsilon}{1+\epsilon h} \theta^2 + \frac{1+\epsilon h}{\epsilon} g_E.$$ 
	Now, let $\lbrace e_0 , e_1, e_2, e_3 \rbrace$ be an orthonormal framing with respect to $\epsilon^{-1} g_\epsilon^{gh}$ which away from small balls around the singular points of $h$ is dual to the co-framing.
	$$e^0 = \sqrt{ \frac{\epsilon}{1+\epsilon h}} \theta , \ \text{and} \ e^i = \sqrt{ \frac{1+\epsilon h}{\epsilon}} dx_i ,  $$
	In particular, this gives $e_i = \sqrt{ \frac{\epsilon}{1+\epsilon h}} \partial_{x_i}$ and in equation \ref{eq:Omega 0i} we have computed the curvature forms of $\epsilon^{-1}g_\epsilon^{gh}$. We find that at a critical point of $\phi=\epsilon^{-1}+h$
	$$\Omega_0^j(e_0,e_i) = - \frac{1
	}{2\phi^2} \frac{\partial^2 \phi}{\partial \mu_i \partial \mu_j} = - \frac{\epsilon^2 V_i}{2(1+\epsilon h)^2} \frac{\partial^2 h}{\partial \mu_i \partial \mu_j} .$$

	At this point, we return to working with the metric $g_\epsilon^{gh}$. An orthonormal framing with respect to this metric is given by $E_i=\epsilon^{-\frac{1}{2}} e_i$. In particular, we have $E_0=\epsilon^{-1} \sqrt{1+\epsilon h} \theta^*$ with $\theta^*$ the dual to $\theta$ with respect to the basis $\theta, dx_1, dx_2, dx_3$. Also, we find that 
	$$\gamma_0^* g_\epsilon^{gh} = \frac{\epsilon^2}{1+\epsilon h} \gamma_0^*\theta^2=dt^2,$$ 
	where $t$ is the arclength parameter, which must therefore take values in $\left[0, \frac{\epsilon}{\sqrt{1+\epsilon h}} \right]$ and satisfy $\dot{\gamma_0}(t):= (d\gamma_0)(\partial_t)=E_0$. Then, writing a normal vector field $V$ is this basis as $V= \sum_{i=1}^3 V_i E_i$, we find that at a critical point of $h$
	\begin{align*}
		R(\dot{\gamma_0} , V) \dot{\gamma_0}  = V_i\Omega_0^j(E_0,E_i) E_j  = \epsilon^{-1} V_i \Omega_0^j(e_0,e_i) E_j  = - \frac{\epsilon V_i}{2(1+\epsilon h)^2} \frac{\partial^2 h}{\partial \mu_i \partial \mu_j} E_j ,
	\end{align*}
	where we have used the fact that the curvature $2$-forms are invariant by scaling. Next, it follows from the computation of the connection forms in \ref{eq:connection_form_2} that at a critical point of $\phi=\epsilon^{-1}+h$ we have $\nabla_{e_0} e_i=0= \nabla_{e_0}^2 e_i$ and so also $\nabla_{E_0} E_i=0= \nabla_{E_0}^2 E_i$, which results in $\frac{D^2V}{dt^2} =  \ddot{V_i} E_i$. Thus, putting these two computations together we obtain
	\begin{align}\nonumber
		J_{\gamma_0}^{gh}(V) & = \frac{D^2V}{dt^2} - \textrm{Riem}(\dot{\gamma_0} , V) \dot{\gamma_0} \\ \label{eq:Jacobi operator 2}
		& = \left( \ddot{V_j} + \frac{\epsilon V_i
		}{2(1+\epsilon h)^2} \frac{\partial^2 h}{\partial \mu_i \partial \mu_j} \right) E_j .
	\end{align}

	\subsection{Index of the Jacobi operator of a geodesic orbit}
	
	In this subsection we compute the index of the Jacobi operator associated with a geodesic orbit for the metrics given by the Gibbons--Hawking ansatz. For convenience, we shall again state the result separately for the cases when the base $\mathbb{R}^3$ and $\mathbb{T}^3$.
	
	\subsubsection{With base $\mathbb{R}^3$}
	
	Let $P_1, \ldots , P_K \in \mathbb{R}^3$ and $m \geq 0$ as in \ref{ss:GH ansatz}. In the next result, we shall consider the metric constructed using the Gibbons--Hawking ansatz with potential function 
	$$\phi=  m + \sum_{i=1}^k \frac{1}{2|x-P_i|}.$$
	
	In the next result, we shall relate the index of a geodesic of the form $\gamma_0=\pi^{-1}(p)$ for the hyperK\"ahler metric \ref{eq:metric GH}, with the index of the potential function $\phi$ at $p$.
	
	\begin{proposition}\label{prop:Jacobi eigenvalues}
		For $P_1, \ldots , P_K \in \mathbb{R}^3$ as above, there is $m_0 >0$ such that for all $m \geq m_0$, $p \in \mathbb{R}^3$ a critical point of $\phi$ and $V \in \mathbb{R}^3$ an eigenvector of $\mathrm{Hess}_p(\phi)$ associated with a non-negative eigenvalue $\mu$, the following holds. For $\gamma_0=\pi^{-1}(p)$, $V$ gives rise to a normal eigenvector-field of $-J_{\gamma_0}$ associated with the eigenvalue
		$$-\frac{\mu}{2\phi(p)^2} .$$
		Conversely, any non-positive eigenvector of $-J_{\gamma_0}$ arises in this way. In particular, we have
		$$\mathrm{index}\left(-J_{\gamma_0}\right) = 3- \mathrm{index} \Hess_p(\phi).$$
	\end{proposition}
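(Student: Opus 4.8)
The plan is to read the index straight off the explicit formula \ref{eq:Jacobi operator} for the Jacobi operator. Writing $H:=\Hess_p(\phi)$ for the Hessian matrix at the critical point $p$, that formula says $-J_{\gamma_0}(V)=\sum_j\bigl(-\ddot V_j-\tfrac{1}{2\phi^2}\sum_i H_{ij}V_i\bigr)e_j$, so on normal fields $V=\sum_{i=1}^3 V_i e_i$ along the circle $\gamma_0$ the operator $-J_{\gamma_0}$ is exactly $-\tfrac{d^2}{dt^2}-\tfrac{1}{2\phi(p)^2}H$. Here I would recall that at a critical point of $\phi$ the connection forms \ref{eq:connection_form_2} vanish, so $\{e_1,e_2,e_3\}$ is parallel along $\gamma_0$ and $\tfrac{D^2V}{dt^2}=\ddot V_i e_i$, just as in the derivation of \ref{eq:Jacobi operator}. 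Since $\phi$ is harmonic away from its singularities, $\tr H=\Delta\phi=0$, so the eigenvalues $\mu_1,\mu_2,\mu_3$ of $H$ sum to zero; I fix an orthonormal eigenbasis $W_1,W_2,W_3$ of $\bbR^3$, regarded as parallel normal fields.

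Next I would decouple the eigenvalue problem. Expanding $V=\sum_a c_a(t)W_a$ turns $-J_{\gamma_0}V=\lambda V$ into three independent scalar equations $-\ddot c_a-\tfrac{\mu_a}{2\phi(p)^2}c_a=\lambda c_a$ on $\gamma_0$, a circle of length $L=\phi(p)^{-1/2}$ (the orbit length recalled in \ref{ss:geodeics GH}). The operator $-\tfrac{d^2}{dt^2}$ on such a circle has spectrum $\{(2\pi n/L)^2:n\ge 0\}=\{4\pi^2 n^2\phi(p):n\ge 0\}$, with the constant mode $n=0$ simple and each $n\ge 1$ mode of multiplicity two. Hence the full spectrum of $-J_{\gamma_0}$ consists of the numbers $\lambda_{a,n}=4\pi^2 n^2\phi(p)-\tfrac{\mu_a}{2\phi(p)^2}$.

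The $n=0$ modes already give the stated correspondence: the parallel field $W_a$ is an eigenfield of $-J_{\gamma_0}$ with eigenvalue $-\tfrac{\mu_a}{2\phi(p)^2}$, which is non-positive precisely when $\mu_a\ge 0$. The hard part is to rule out any further non-positive eigenvalues coming from the oscillatory $n\ge 1$ modes, i.e.\ to establish $\lambda_{a,n}>0$ for all $n\ge 1$ once $m$ is large, equivalently $8\pi^2\phi(p)^3>\mu_a$. The key observation making this tractable is that $m$ enters $\phi=m+\sum_i\tfrac{1}{2|x-P_i|}$ only as an additive constant, so both the critical point $p$ and the Hessian $H$ are independent of $m$, while $\phi(p)=m+\sum_i\tfrac{1}{2|p-P_i|}\to\infty$ as $m\to\infty$. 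Thus there is an $m_0$ with $8\pi^2\phi(p)^3>\max_a|\mu_a|$ for all $m\ge m_0$, forcing every $n\ge 1$ eigenvalue to be strictly positive.

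Consequently, for $m\ge m_0$ the non-positive spectrum of $-J_{\gamma_0}$ is exactly $\{-\mu_a/(2\phi(p)^2):\mu_a\ge 0\}$, which gives both the stated eigenvalue and the converse statement. Counting strictly negative eigenvalues yields $\mathrm{index}(-J_{\gamma_0})=\#\{a:\mu_a>0\}$, and since $H$ is trace-free (hence, when non-degenerate, $\#\{a:\mu_a>0\}=3-\#\{a:\mu_a<0\}$) this equals $3-\mathrm{index}\,\Hess_p(\phi)$; the zero eigenvalues of $H$, if any, correspond precisely to the kernel of $-J_{\gamma_0}$ and account for the non-degeneracy characterization.
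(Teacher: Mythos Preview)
Your argument is correct and follows essentially the same route as the paper: write $-J_{\gamma_0}=-\tfrac{d^2}{dt^2}-\tfrac{1}{2\phi(p)^2}H$ on parallel normal fields, Fourier-decompose along the fiber, and use that $\phi=m+\psi$ with $\psi$ independent of $m$ so that the critical point and Hessian are fixed while $\phi(p)\to\infty$, forcing all $n\ge 1$ modes to have positive eigenvalue for large $m$. The only differences are cosmetic (you diagonalize $H$ before Fourier-expanding, the paper after) and a harmless numerical slip: the fiber has length $L=2\pi\phi(p)^{-1/2}$, not $\phi(p)^{-1/2}$, so the $n$-th Laplace eigenvalue is $n^2\phi(p)$ rather than $4\pi^2 n^2\phi(p)$---this does not affect the argument.
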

	\begin{proof}
		In the situation of this proposition, $\gamma_0= \pi^{-1}(p)$ is a geodesic orbit of length $L=2\pi\phi(p)^{-1/2}$. Then, using an orthonormal framing $\lbrace e_0 , e_1, e_2, e_3 \rbrace$ and formula \ref{eq:Jacobi operator}, we have
		$$J_{\gamma_0}(V) = \left( \ddot{V_i} + \frac{1}{2 \phi^2} \frac{\partial^2 \phi}{\partial \mu_i \partial \mu_j} V_j \right) e_i. $$
		In particular, the operator $-J_{\gamma_0}$ is elliptic operator with a finite number of negative eigenvalues. We shall now look for non-positive eigenvalues $-\lambda \leq 0$ of $-J_\gamma$. For this, it is convenient to decompose any associated eigenvector $V(t)= \sum_{i=1}^3V_i(t) e_i$ into Fourier modes. Thus, writing its coefficients, $V_i(t)$ for $i=1,2,3$, in a cosine series, we have
		\begin{align*}
			V_i(t) = V_{i,0} + \sum_{n \in \mathbb{N}} V_{i,n} \cos (2\pi n t/L).
		\end{align*}
		Then, the equation $-J_{\gamma}(V)=-\lambda V$ gives, for $i=1,2,3$,
		\begin{align*}
			\frac{4\pi^2 n^2}{L^2} V_{i,n} - \sum_{j=1}^3\frac{1}{2 \phi^2} \frac{\partial^2 \phi}{\partial \mu_i \partial \mu_j} V_{j,n}  = -\lambda V_{i,n} ,
		\end{align*}
		for all $n \in \mathbb{N}$ for which $V_{i,n}$ is nonzero, and  
		\begin{align*}
			- \frac{1}{2 \phi^2} \frac{\partial^2 \phi}{\partial \mu_i \partial \mu_j} V_{j,0}  = -\lambda V_{i,0} .
		\end{align*}
		At this point it is convenient to write $\phi = m + \psi$ where $\psi=\sum_{i=1}^k \frac{1}{2|x-P_i|}$ is independent of $m$. Then, the previous equations can be rewritten as
		\begin{align*}
			\frac{1}{2 \phi^2} \frac{\partial^2 \phi}{\partial \mu_i \partial \mu_j} V_{j,0} & = \lambda V_{i,0} \\
			\frac{1}{2 \phi^2} \frac{\partial^2 \phi}{\partial \mu_i \partial \mu_j} V_{j,n} & = \left( \lambda + \frac{4\pi^2 n^2}{L^2} \right) V_{i,n} ,
		\end{align*}
		or, explicitly in terms of $m$ and $\psi$,
		\begin{align*}
			\frac{\partial^2 \psi}{\partial \mu_i \partial \mu_j} V_{j,0} & = 2 (m+\psi)^2 \lambda V_{i,0} \\
			\frac{\partial^2 \psi}{\partial \mu_i \partial \mu_j} V_{j,n} & = 2 (m+\psi)^2 \left( \lambda + n^2(m+\psi) \right) V_{i,n} ,
		\end{align*}
		where we have used the fact that $L^2=4\pi^2 (m+\psi)^{-1}$. Now, notice that the left hand side of these equations is independent of $m$ while for $n \neq 0$ the right hand side increases with $m$ due to the non-negativity of $\lambda$ and the positivity of $n^2m$. Hence, there is $m_0\geq 0$ such that for all $m \geq m_0$ there is no nonzero solution to the equations with $n \neq 0$. We must therefore focus on the equation for $n=0$ and write $\lambda :=  \frac{\mu}{2(m+\psi)^2} $, then
		\begin{align*}
			\frac{\partial^2 \psi}{\partial \mu_i \partial \mu_j} V_{j,0} & = \mu V_{i,0}  
		\end{align*}
		and we conclude that $\mu$ is an eigenvalue of the Hessian of $\psi$ (which coincides with the Hessian of $\phi$) and $(V_{1,0},V_{2,0},V_{3,0})$ an associated eigenvector. Furthermore, given that $\lambda\geq 0$ we must also have that $\mu\geq 0$.
	\end{proof}
	
	\begin{remark}\label{rem:rescaled statement}
		By rescaling the metric using $m^{-1}$ we can equally rescale the (scale invariant) Euclidean metric in $\mathbb{R}^3$. This allows one to state proposition \ref{prop:Jacobi eigenvalues} with $m=1$. Then, the statement is that there is $d_0 >0$ such that if $|P_i-P_j| \geq d_0$ for all $i\neq j$, the same conclusion holds.
	\end{remark}

	\subsubsection{With base $\mathbb{T}_\Lambda$}
	
	In this subsection we perform a similar computation, but with base a $3$-torus $\mathbb{T}_\Lambda$. One key difference is that we now have negative, as well as positive, charges respectively placed at the points $\lbrace q_1, \ldots q_8 \rbrace$ and $\lbrace p_1 , \ldots , p_n \rbrace$ as in \ref{sec:Foscolo K3}. As in that Section, we  have the respective harmonic function $h$ which we use to construct the hyperK\"ahler metric $g_\epsilon^{gh}$ as in \ref{eq:g_epsilon^{gh}}, which corresponds to $\epsilon$ times the Gibbons--Hawking metric determined by the potential function $\phi= \epsilon^{-1}+h$.
	
	As before, we shall now relate the index of a geodesic of the form $\gamma_0=\pi^{-1}(p)$ for the metric $g_\epsilon^{gh}$ with the index of $h$ at $p$.
	
	\begin{proposition}\label{prop:Jacobi eigenvalues 2}
		There is $\epsilon_0>0$ such that for all $\epsilon \leq \epsilon_0$, the following holds. Let $p \in \mathbb{T}^3$ be a critical point of $h$ and $V \in \mathbb{R}^3$ an eigenvector of $\mathrm{Hess}_p(h)$ associated with a non-negative eigenvalue $\mu\geq 0$. Then, for $\gamma_0=\pi^{-1}(p)$, $V$ gives rise to a normal eigenvector-field of $-J_{\gamma_0}$ associated with the eigenvalue
		$$- \frac{\epsilon}{2(1+\epsilon h(p))^2} \mu .$$
		Conversely, any non-positive eigenvector of $-J_{\gamma_0}$ arises in this way. In particular, we have
		$$\mathrm{index}\left(-J_{\gamma_0}\right) = 3- \mathrm{index} \Hess_p(h).$$
	\end{proposition}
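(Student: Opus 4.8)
The plan is to follow verbatim the strategy of the proof of Proposition \ref{prop:Jacobi eigenvalues}, with the large parameter $m$ there replaced by the small parameter $\epsilon$ here. First I would record that $\gamma_0=\pi^{-1}(p)$ is a geodesic orbit and that, because the base point is frozen at the critical point $p$ along the entire fiber, the coefficients of the Jacobi operator \ref{eq:Jacobi operator 2} are \emph{constant} in the arclength parameter $t$. Writing $L=2\pi\epsilon(1+\epsilon h(p))^{-1/2}$ for the length of $\gamma_0$, the operator $-J_{\gamma_0}^{gh}$ is then a constant-coefficient elliptic second-order ODE operator on the circle of length $L$ with symmetric $3\times3$ zeroth-order matrix, hence has discrete spectrum bounded below and only finitely many negative eigenvalues.

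To locate the non-positive eigenvalues $-\lambda\le 0$ I would decompose an eigenfield $V=\sum_{i=1}^3 V_i(t)E_i$ into Fourier cosine modes $V_i(t)=V_{i,0}+\sum_{n\in\mathbb{N}}V_{i,n}\cos(2\pi n t/L)$, on which the constant-coefficient operator acts diagonally. Using $\tfrac{4\pi^2 n^2}{L^2}=\tfrac{n^2(1+\epsilon h(p))}{\epsilon^2}$ together with \ref{eq:Jacobi operator 2}, the eigenvalue equation $-J_{\gamma_0}^{gh}(V)=-\lambda V$ decouples, mode by mode, into
\begin{align*}
\frac{\epsilon}{2(1+\epsilon h(p))^2}\,\frac{\partial^2 h}{\partial\mu_i\partial\mu_j}(p)\,V_{j,n} = \left(\lambda+\frac{n^2(1+\epsilon h(p))}{\epsilon^2}\right)V_{i,n}.
\end{align*}
For $n=0$ this reads $\Hess_p(h)\,V_{\cdot,0}=\mu\,V_{\cdot,0}$ with $\mu:=\tfrac{2(1+\epsilon h(p))^2}{\epsilon}\lambda$, so each non-negative eigenvalue $\mu$ of $\Hess_p(h)$ produces a constant normal eigenfield with eigenvalue $-\lambda=-\tfrac{\epsilon}{2(1+\epsilon h(p))^2}\mu$, exactly as asserted; and $\lambda\ge0$ forces $\mu\ge0$, giving the stated correspondence.

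The heart of the argument, and the step requiring care, is ruling out the higher modes $n\neq0$. Rewriting the mode-$n$ equation as $\Hess_p(h)\,V_{\cdot,n}=\tfrac{2(1+\epsilon h(p))^2}{\epsilon}\bigl(\lambda+\tfrac{n^2(1+\epsilon h(p))}{\epsilon^2}\bigr)V_{\cdot,n}$, the scalar coefficient on the right is bounded below, for $\lambda\ge0$ and $n\neq0$, by $\tfrac{2(1+\epsilon h(p))^3}{\epsilon^3}$, which tends to $+\infty$ as $\epsilon\to0$, whereas the matrix $\Hess_p(h)$ on the left is \emph{fixed} (independent of $\epsilon$, and finite since $p$ is a critical point away from the poles of $h$). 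Once $\epsilon$ is small enough that this coefficient exceeds $\|\Hess_p(h)\|$, it cannot be an eigenvalue of $\Hess_p(h)$, so $V_{\cdot,n}=0$; this produces the threshold $\epsilon_0$ of the statement, depending only on $\|\Hess_p(h)\|$, $h(p)$ and the fixed geometry. I expect the only delicate point to be checking that this separation of scales is genuinely \emph{uniform} — the quantities to track are the bound on $\Hess_p(h)$ and the positivity of $1+\epsilon h(p)$ near $p$. Granting it, every non-positive eigenvector of $-J_{\gamma_0}^{gh}$ comes from the $n=0$ mode, so its strictly negative eigenvalues correspond bijectively to the strictly positive eigenvalues of $\Hess_p(h)$. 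Counting them and using that a harmonic $h$ has trace-free Hessian — so at a non-degenerate critical point the number of positive eigenvalues is $3-\mathrm{index}\,\Hess_p(h)$ — yields $\mathrm{index}(-J_{\gamma_0})=3-\mathrm{index}\,\Hess_p(h)$, completing the proof.
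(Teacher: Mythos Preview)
Your proposal is correct and follows essentially the same route as the paper's proof: Fourier-decompose the constant-coefficient Jacobi operator \ref{eq:Jacobi operator 2} along the circle fiber, observe that the $n=0$ mode reduces to the eigenvalue equation for $\Hess_p(h)$, and kill the $n\neq 0$ modes by the scale separation $\epsilon^{-3}\gg \|\Hess_p(h)\|$. Your write-up is in fact slightly more careful than the paper's in two places --- you make explicit why the operator has constant coefficients (so the Fourier decomposition is exact), and you note that the index identity uses non-degeneracy of $\Hess_p(h)$ (the harmonicity is not actually needed for the count, only non-degeneracy).
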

	\begin{proof}
		The proof follows from performing the same computations as in the proof of Proposition \ref{prop:Jacobi eigenvalues}, but using instead the metric $g_\epsilon^{gh}$ which yields instead the equations
		\begin{align*}
			\frac{1}{2(1+\epsilon h)^2} \frac{\partial^2 h}{\partial \mu_i \partial \mu_j} V_{j,0} & = \frac{\lambda}{\epsilon} V_{i,0} \\
			\frac{1}{2(1+\epsilon h)^2} \frac{\partial^2 h}{\partial \mu_i \partial \mu_j} V_{j,n} & = \left( \frac{\lambda}{\epsilon} + \frac{4\pi^2 n^2}{\epsilon^3} (1+\epsilon h) \right) V_{i,n} .
		\end{align*}
		Notice that for $n \neq 0$ the rights hand side is exploding due to the $n^2 \epsilon^{-3}$ term, while the left hand side can be bounded above independently of $\epsilon<1$. Hence, for small enough $\epsilon$ we can focus on the first equation, i.e. the one with $n=0$, and write $\lambda := \lambda(\epsilon) = \frac{\epsilon}{2(1+\epsilon h)^2} \mu $
		\begin{align*}
			\frac{\partial^2 h}{\partial \mu_i \partial \mu_j} V_{j,0} & = \mu V_{i,0}  .
		\end{align*}
		Hence, $\mu$ is an eigenvalue of the Hessian of $h$ and $(V_{1,0},V_{2,0},V_{3,0})$ an associated eigenvector. Given that $\lambda \geq 0$ we must also have that $\mu \geq 0$ and we have proved. The converse reasoning applies immediately and we have thus concluded the proof of the stated result.
	\end{proof}

	\subsection{Inverting the Jacobi operator of a geodesic orbit for $g_\epsilon^{gh}$}\label{ss:Jacobi in the building blocks}
	
	In this subsection we shall prove Proposition \ref{prop:Jacobi} which gives conditions upon which it is possible to invert the Jacobi operator of a geodesic orbit for the Gibbons--Hawking metric $g_\epsilon^{gh}$. The result also gives an upper bound on the norm of the inverse which will be necessary later when deforming such a geodesic orbit for $g_\epsilon^{gh}$ to a geodesic of $g_\epsilon^{hk}$. 
	
	\begin{proposition}\label{prop:Jacobi}
		Let $\gamma_0=\pi^{-1}(p)$ be a geodesic circle orbit for the Gibbons--Hawking metric $g^{gh}_\epsilon =\epsilon^2h_\epsilon^{-1} \theta^2 + h_\epsilon g_E$ on $P$ from section \ref{sec:Foscolo K3}. If the Hessian of $h$ is non-degenerate at $p$, then the Jacobi operator of $\gamma_0$
		$$J_{\gamma_0}:W^{2,2}(S^1, \gamma_0^* (TP)^\perp) \to L^{2}(S^1, \gamma_0^* (TP)^\perp),
		$$ 
		is invertible and its inverse satisfies
		$$\| J_{\gamma_0}^{-1}(V) \|_{L^2} \lesssim \epsilon^{-1} \| V \|_{W^{2,2}}. $$
		In particular, $\gamma_0$ is nongenerate, i.e. has nullity zero.
	\end{proposition}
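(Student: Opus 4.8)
The plan is to diagonalize $J_{\gamma_0}$ completely and to read off both invertibility and the size of the inverse directly from its spectrum. The key structural input is formula \ref{eq:Jacobi operator 2}, together with the fact established in the derivation preceding Proposition \ref{prop:Jacobi eigenvalues 2} that along $\gamma_0$ the frame $\lbrace E_1,E_2,E_3\rbrace$ is parallel (since $\nabla_{E_0}E_i=0$ at the critical point), so that $\frac{D^2V}{dt^2}=\ddot V_i E_i$ and $J_{\gamma_0}$ is a constant-coefficient, formally self-adjoint operator once the coefficients are frozen at $p$. Writing $V=\sum_i V_i(t)E_i$ and expanding each $V_i$ in Fourier modes in the arclength parameter on $S^1$ (period $L=2\pi(\epsilon^{-1}+h(p))^{-1/2}$), exactly as in the proof of Proposition \ref{prop:Jacobi eigenvalues 2}, the operator $-J_{\gamma_0}$ block-diagonalizes: on the $n$-th Fourier block it acts as the symmetric matrix $\frac{n^2(1+\epsilon h(p))}{\epsilon}I-\frac{\epsilon}{2(1+\epsilon h(p))^2}\mathrm{Hess}_p(h)$. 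Diagonalizing $\mathrm{Hess}_p(h)$ with eigenvalues $\mu_1,\mu_2,\mu_3$, the full spectrum of $-J_{\gamma_0}$ is therefore $\lbrace \frac{n^2(1+\epsilon h(p))}{\epsilon}-\frac{\epsilon}{2(1+\epsilon h(p))^2}\mu_a : n\in\mathbb{N}_0,\ a=1,2,3\rbrace$.

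Invertibility and the nullity claim then follow by separating the $n=0$ block from the rest. For $n=0$ the eigenvalues are $-\frac{\epsilon}{2(1+\epsilon h(p))^2}\mu_a$, which are nonzero precisely because $\mathrm{Hess}_p(h)$ is non-degenerate; this is where the hypothesis enters. Since $h$ and $p$ are independent of $\epsilon$, one has $\min_a|\mu_a|\geq c_0>0$ for a fixed constant, so these eigenvalues have absolute value comparable to $\epsilon$. For $n\geq 1$ the term $\frac{n^2(1+\epsilon h(p))}{\epsilon}\geq \frac{1}{2\epsilon}$ (using $|h(p)|\epsilon<\frac12$ for small $\epsilon$) dominates the Hessian term, which is bounded by $C\epsilon$; hence every $n\geq 1$ eigenvalue is at least $\frac{1}{2\epsilon}-C\epsilon>0$ for all $\epsilon$ below some $\epsilon_0$. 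Thus for $\epsilon<\epsilon_0$ the kernel of $J_{\gamma_0}$ is trivial, which gives invertibility on $W^{2,2}\to L^2$ by elliptic theory and, in particular, nullity zero.

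For the norm bound I would simply identify the smallest eigenvalue in absolute value. By the two estimates above, the $n=0$ block contributes eigenvalues of size $\sim\epsilon$ while every $n\geq 1$ eigenvalue is $\gtrsim\epsilon^{-1}$; hence the distance of the spectrum of $-J_{\gamma_0}$ from $0$ equals $\frac{\epsilon}{2(1+\epsilon h(p))^2}\min_a|\mu_a|\gtrsim\epsilon$. Since $J_{\gamma_0}$ is self-adjoint on $L^2$, it follows that $\|J_{\gamma_0}^{-1}\|_{L^2\to L^2}\lesssim\epsilon^{-1}$, and as $\|\cdot\|_{L^2}\leq\|\cdot\|_{W^{2,2}}$ the stated estimate $\|J_{\gamma_0}^{-1}(V)\|_{L^2}\lesssim\epsilon^{-1}\|V\|_{W^{2,2}}$ follows immediately.

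The computation is otherwise routine, so the only point requiring genuine care is uniformity in $\epsilon$: one must check that the fixed lower bound $c_0$ on $\min_a|\mu_a|$, the boundedness of $h(p)$, and the resulting control of $(1+\epsilon h(p))^{\pm1}$ all hold with constants independent of $\epsilon$, so that a single $\epsilon_0$ handles every Fourier mode simultaneously. The favorable feature exploited here is that the collapse $L\sim\epsilon^{1/2}$ pushes the $n\geq 1$ part of the spectrum up to order $\epsilon^{-1}$, isolating the $n=0$ block as the sole source of small eigenvalues; the $\epsilon^{-1}$ in the estimate is exactly the reciprocal of this order-$\epsilon$ lowest eigenvalue, and this is why the Jacobi operator is becoming degenerate as $\epsilon\to0$.
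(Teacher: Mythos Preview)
Your argument is essentially the paper's own: Fourier-decompose in the parallel frame $\{E_i\}$, split into the $n=0$ block (governed by $\mathrm{Hess}_p(h)$, eigenvalues $\sim\epsilon$) and the $n\geq 1$ blocks (dominated by $\ddot V$), and read off the inverse bound from the smallest eigenvalue. The paper organizes this as a splitting $T_0\oplus T_p$ and tracks $W^{2,2}$ norms on each piece, but the content is the same.

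One computational slip: your period $L=2\pi(\epsilon^{-1}+h(p))^{-1/2}\sim\epsilon^{1/2}$ is the circle length for the Gibbons--Hawking metric with potential $\phi=\epsilon^{-1}+h$, but $g_\epsilon^{gh}$ is $\epsilon$ times that metric, so the actual arclength period is $L=\frac{2\pi\epsilon}{\sqrt{1+\epsilon h(p)}}\sim\epsilon$ (cf.\ the discussion just before equation \eqref{eq:Jacobi operator 2}). Consequently the $n\geq 1$ eigenvalues are of order $n^2\epsilon^{-2}$, not $n^2\epsilon^{-1}$. This only strengthens your separation between the $n=0$ and $n\geq 1$ blocks, so the conclusion and the final bound $\|J_{\gamma_0}^{-1}\|_{L^2\to L^2}\lesssim\epsilon^{-1}$ are unaffected.
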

	
	\begin{proof}
		As before, we write a normal vector field $V=\sum_{i=1}^3 V_i E_i$ and write each coefficient $V_i$ as a cosine series 
		$$V_i=  V_{i,0} + \sum_{n \in \mathbb{N}} V_{i,n} \cos \left( 2\pi n \frac{\sqrt{1+\epsilon h}}{\epsilon} t \right),$$ 
		and using equation \ref{eq:Jacobi operator 2}, we find that $J_{\gamma_0}(V)= \sum_{j=1}^3 (J_{\gamma_0}(V))^j E_j$, with
		\begin{align*}
		(J_{\gamma_0}(V))^j  & =   \sum_{i=1}^3 \frac{\epsilon V_{i,0}
			}{2(1+\epsilon h)^2} \frac{\partial^2 h}{\partial \mu_i \partial \mu_j} + \\
		& \ \ \  + \sum_{i=1}^3 \sum_{n \in \mathbb{N}}  V_{i,n} \cos \left( 2\pi n \frac{\sqrt{1+\epsilon h}}{\epsilon} t \right) \left( \frac{\epsilon}{2(1+\epsilon h)^2} \frac{\partial^2 h}{\partial \mu_i \partial \mu_j} - \delta_{ij} \frac{4\pi^2 n^2}{\epsilon^2} (1+\epsilon h) \right) .
		\end{align*}
		At this point it is convenient to split the normal bundle along $\gamma_0$ as $\gamma_0^* (TP)^\perp = T_0 \oplus T_p$ where $T_0= \lbrace V \ | \ \nabla_{\dot{\gamma_0}} V=0 \rbrace$ and $T_p$ denotes its orthogonal complement with respect to the $L^2$-metric. With these splittings in place, notice that 
		$$V_{i,0}E_i \in T_0 , \ \text{and} \ V_{i,n} \cos \left( 2\pi n \frac{\sqrt{1+\epsilon h}}{\epsilon} t \right) E_i \in T_p \ \text{for all $n \in \mathbb{N}$.}$$ 
		Hence, we find that $J_{\gamma_0}|_{T_0}$ is invertible provided that the Hessian of $h$ is. In this case, we have $\|J_{\gamma_0}^{-1} |_{T_0} \| \sim \epsilon^{-1}$. Furthermore, we have that for $\epsilon \ll 1$ we actually have
		$$-\langle V , J_{\gamma_0}(V) \rangle_{L^2} \gtrsim \epsilon^{-2} \|V \|_{L^2}^2 , $$
		for all $V \in T_p$ and so $J_{\gamma_0}|_{T_p}:W^{2,2} \to L^2$ is also invertible and we shall now compute an upper bound for $\|J_{\gamma_0}^{-1}|_{T_p}\|$. Next, we decompose $T_p = \oplus_{n \in \mathbb{N}} T_n$ where $T_n$ is generated by vector fields of the form
		$$V_n =  \cos \left( 2\pi n \frac{\sqrt{1+\epsilon h}}{\epsilon} t \right) ( a_1 E_ 1 + a_2 E_2 + a_3 E_3 ), $$ 
		for $a_1,a_2,a_3 \in \mathbb{R}$. In particular, given that we are at a critical point of $h$ we have
		\begin{align*}
			|V_n|^2 & = (a_1^2+a_2^2+a_3^2) \cos \left( 2\pi n \frac{\sqrt{1+\epsilon h}}{\epsilon} t \right)^2 \\
			|\nabla_{E_0} V_n|^2 & = 4 \pi^2 n^2  \frac{1+\epsilon h}{\epsilon^2} (a_1^2 + a_2^2 +a_3^2)  \cos \left( 2\pi n \frac{\sqrt{1+\epsilon h}}{\epsilon} t \right)^2 \\
			|\nabla_{E_0}^2 V_n|^2 & = 16 \pi^4 n^4  \frac{(1+\epsilon h)^2}{\epsilon^4} (a_1^2 + a_2^2 +a_3^2)  \cos \left( 2\pi n \frac{\sqrt{1+\epsilon h}}{\epsilon} t \right)^2 ,
		\end{align*}
		From this we find that for $\epsilon \ll 1$
		$$\|V_n \|_{W^{2,2}} \sim n \epsilon^{-1} \| V_n \|_{W^{2,1}} \sim n^2 \epsilon^{-2} \|V_n \|_{L^2},$$
		and we find from the previous computation that 
		$$\| J_{\gamma_0}(V_n) \|_{L^2} \gtrsim n^2\epsilon^{-2} \| V_n \|_{L^2} \sim n^2\epsilon^{-2} \left( n^{-2} \epsilon^2 \| V_n \|_{W^{2,2}} \right) \sim \| V_n \|_{W^{2,2}}.$$
		Again, from this we discover that $J_{\gamma_0}|_{T_p}:W^{2,2} \to L^2$ is invertible and
		$$\| J_{\gamma_0}|_{T_p}^{-1}(W) \|_{W^{2,2}} \lesssim \| W \|_{L^2},$$
		for all $W \in T_p$. Hence, we can construct an inverse $J_{\gamma_0}^{-1} = J_{\gamma_0}^{-1}|_{T_0} \oplus J_{\gamma_0}^{-1}|_{T_p}:L^2(T) \to W^{2,2}(T)$ satisfying 
		$$\| J_{\gamma_0}^{-1}(V_0 + V_p) \|_{L^2} \lesssim \epsilon^{-1} \| V_0 \|_{W^{2,2}} + \| V_p \|_{W^{2,2}} \lesssim \epsilon^{-1} \| V \|_{W^{2,2}}, $$
		for $\epsilon \ll 1$.
	\end{proof}

	\section{Geodesics on $\bigcup_{j=1}^8 M_\epsilon^j \cup \bigcup_{i=1}^n N_\epsilon^i$}\label{sec:deform geodesics concentrated}
	
	In this section we shall explain how to use what we have learned about geodesics on ALF gravitational instantons in order to construct closed geodesics on the curvature concentrated region of Foscolo's K3 surfaces, namely $\bigcup_{j=1}^8 M_\epsilon^j \cup \bigcup_{i=1}^n N_\epsilon^i$. In both cases, in $M_\epsilon^j$ and $N_\epsilon^i$, we shall deform the original geodesics on the ALF gravitational instantons by making use of White's perturbation theorem.

	\subsection{Geodesics on $M_\epsilon^j$}\label{ss:geodesics on AH}
	
	For each $j=1, \ldots , 8$, consider the restriction of the hyperK\"ahler metric $\epsilon^{-2}g_\epsilon^{hk}$ to $M_\epsilon^j$. We mentioned in section \ref{sec:Foscolo K3} that, with this scaling, the metric $\epsilon^{-2}g_\epsilon^{hk}$ uniformly converges with all derivatives to an initially fixed ALF gravitational instanton of type $D_{m_j}$ denoted by $M^j$.
	
	Then, it follows from White's deformation theorem (Theorem 3.2 in \cite{White}) that any non-degenerate closed geodesic in $M^j$ deforms to a non-degenerate closed geodesic in $M_\epsilon^j$ with the same index. Unfortunately, the author is not knowledgeable about closed geodesics on ALF gravitational instanton of type $D_{m}$ for $m \geq 1$. On the other hand, in the case $m=0$, i.e. the Atiyah--Hitchin manifold, we saw in section \ref{sec:D0} that there is no closed geodesic which is stable to second order, but unstable closed geodesics do exist. Indeed, we saw that there is a family of closed geodesics parameterized by $\mathbb{RP}^2$. Then, as $\mathbb{RP}^2$ has Lusternik--Schnirelmann number $3$, by White's deformation theorem \cite{White} there are at least $3$ closed geodesics on $M_\epsilon^j$ for sufficiently small $\epsilon>0$.

	\subsection{Geodesics on $N_\epsilon^i$}
	
	Recall, from section \ref{sec:Foscolo K3} that for each $i=1, \ldots , 8$, the rescaled metrics $\epsilon^{-2}g_\epsilon^{hk}$ restricted to $N_\epsilon^i$ uniformly converge with all derivatives to an initially fixed ALF gravitational instanton of type $A_{k_i-1}$ denoted $N^i$.
	
	As before, for small enough $\epsilon$, we can use White's deformation theorem to deform any non-degenerate closed geodesic in $N^i$ to one in $N_\epsilon^i$ of the same index. Hence, it is sufficient to find such geodesics for $N^i$.
	
	\begin{example}\label{ex:A_2}
		Let $N^i$ be the ALF gravitational instanton of type $A_2$ constructed using the Gibbons--Hawking ansatz by placing the singularities $\lbrace P_1, P_2, P_3 \rbrace$ of $\phi(x)=m+\sum_{i=1}^3 \frac{1}{2 |x-P_i|}$ in the vertices of an equilateral triangle. By proposition \ref{prop:triangle} there are $4$ non-degenerate geodesics. Together with proposition \ref{prop:Jacobi eigenvalues} we conclude that for either $m \geq m_0$ or $\min_{i \neq j}|P_i-P_j| >d_0$, one of these geodesics has index $1$ while the remaining three have index $2$.
	\end{example}

	\begin{example}\label{ex:A_8}
		Consider several groups of three singularities $\lbrace P^{(n)}_1 , P^{(n)}_2, P^{(n)}_3 \rbrace_{n=1, \ldots , N} $ with each group placed at a sufficiently large distance from any other. We shall explicitly impose this by demanding that 
		$$D:= \min_{n,m=1, \ldots , N} \min_{i,j=1,2,3} |P_i^{(n)}-P_j^{(m)}| \gg d:= \max_{n=1, \ldots , N} \max_{i,j=1,2,3} |P_i^{(n)}-P_j^{(n)}|.$$
		Then, we can write $\phi = m + \sum_{n=1}^N \phi^{(n)}$ with
		$$\phi^{(n)} = \sum_{i=1}^3 \frac{1}{2|x-P^{(n)}_i|}. $$
		Then, for any $l \in \lbrace 1, \ldots , N \rbrace$ and $x$ in the convex hull of the $3$ points $\lbrace P_1^{(l)},P_2^{(l)},P_3^{(l)} \rbrace$, we find that for $n \neq l$
		$$|\nabla^k \phi^{(n)}| \lesssim D^{-k-1}.$$
		In particular, if $\phi^{(l)}$ is non-degenerate, we find that there is $D_0$ sufficiently large so that for $D \geq D_0$ the function $\phi$ has the same number of critical points in the convex hull of $\lbrace P_1^{(l)},P_2^{(l)},P_3^{(l)} \rbrace$ as $\phi^{(l)}$ and these have the same index.
		
		With these observations in mind we now set $N=3$ and place each $\lbrace P^{(n)}_1 , P^{(n)}_2, P^{(n)}_3 \rbrace$ at the vertices of equilateral triangles of side $d$. Then, we obtain three index $2$ and one index $1$ critical points located in the interior of each of these triangles. Furthermore, we can place each of these small triangles at the vertices of a much larger triangle (of side $D \gg d$). Then, in the interior of this large triangle we find three more index $2$ critical points and one more of index $1$. 
		
		Overall, using proposition \ref{prop:Jacobi eigenvalues} we find that in the resulting ALF gravitational instanton of type $A_8$, named $N^i$, there is a total of $12$ geodesic orbits of index one and $4$ geodesic orbits of index two. See Figure \ref{fig:tritriangle}.
	\end{example}
	
	\begin{example}\label{ex:A_1}
		Let $N^i$ be an ALF gravitational instanton of type $A_1$. Then, the pre-image via $\pi$ of the straight line connecting the two singularities of $\phi$ is a totally geodesic, non-round, but axially symmetric ellipsoid. Therefore, it admits $1$ nondegenerate closed geodesics and a $1$-parameter family of closed geodesics, parametrized by $S^1$, with nullity equal to one and index two. 
		As the Lusternik--Schnirelmann number of $S^1$ is equal to two, we find by White's deformation theorem \cite{White} that any sufficiently small perturbation of this metric admits at least $1+2=3$ closed geodesics, each of each index one, two and three.
	\end{example}

	\section{Geodesics on $K_\epsilon$}\label{sec:deform geodesics collapse}
	
	This section concerns the construction of closed geodesics on the collapsing region of Fosocolo's K3 surfaces, i.e. in $K_\epsilon$. We start in \ref{ss:Deforming in K epsilon} by deforming collapsing geodesic circles in the Gibbons--Hawking approximation $K_{\epsilon^{gh}}$ to actual geodesics on Fosocolo's K3-surface $K_\epsilon$. The main result of this subsection is Proposition \ref{prop:solving the geodesic equation}. Then, in \ref{ss:Critical points of h} we investigate critical points of $h$ as these give rise to closed geodesics on the approximation $K_{\epsilon^{gh}}$. When non-degenerate, these can be fed to Proposition \ref{prop:solving the geodesic equation} and thus deformed to closed geodesics of $K_\epsilon$. Finally, in \ref{ss:cube} we give an example that serves as an application of the remaining results of this section.
	
	Before dwelling into the statement and proof of Proposition \ref{prop:solving the geodesic equation} we recall some notation which is useful in stating that result. As mentioned in section \ref{sec:Foscolo K3}, $K_\epsilon$ is a large open set in $X_\epsilon=(X, g_\epsilon^{hk})$ which we can also view as a subset of $P/\mathbb{Z}_2$ equipped with the hyperK\"ahler metric $g_\epsilon^{hk}$. Hence, we can restrict the projection $\pi$ to $K_\epsilon$ yielding a map $\pi: K_\epsilon \to \mathbb{T}_\Lambda/\mathbb{Z}_2$.

	\subsection{Deforming geodesics}\label{ss:Deforming in K epsilon}
	
	In this subsection we shall deform geodesics with respect to the approximate hyperK\"ahler metric $g_\epsilon^{gh}$ to actual geodesics for Foscolo's K3 metric $g_\epsilon^{hk}$.
	
	\begin{proposition}\label{prop:solving the geodesic equation}
		Let $p$ be a nondegenerate critical point of $h$ and $\gamma_0=\pi^{-1}(p)$. Then, there is $\epsilon_0>0$ with the property that for all positive $\epsilon < \epsilon_0$:
		\begin{itemize}
			\item There is a unique normal vector field $U \in \Gamma(S^1, \gamma_0^*(TX)^\perp)$ satisfying $\|U\|_{W^{2,2}} \lesssim \epsilon^{9/5}$ and such that $\gamma(t)=\exp_{\gamma_0(t)}(U(t))$ is a closed geodesic for the hyperK\"ahler metric $g_{\epsilon}^{hk}$;
			
			\item The Hausdorff distance between $\gamma_0$ and $\gamma$ satisfies $d_H (\gamma_0 , \gamma)  \lesssim \epsilon^{13/10} $.
			
			\item The index of the $\gamma$ with respect to $g_\epsilon^{hk}$ coincides with that of $\gamma_0$ with respect to $g_\epsilon$.
		\end{itemize} 
	\end{proposition}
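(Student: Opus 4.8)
The plan is to produce $\gamma$ by deforming $\gamma_0$ through a contraction-mapping argument built on the invertibility of the Jacobi operator from Proposition \ref{prop:Jacobi} together with the metric approximation \ref{eq:metric estimate}. I would parametrize curves near $\gamma_0$ by normal vector fields $U \in \Gamma(S^1, \gamma_0^*(TX)^\perp)$ via $\gamma_U(t)=\exp_{\gamma_0(t)}(U(t))$, the exponential taken for $g_\epsilon^{hk}$, and let $\mathcal{F}(U)$ be the normal component of the acceleration $\nabla^{hk}_{\dot\gamma_U}\dot\gamma_U$, so that $\gamma_U$ is a geodesic up to reparametrization exactly when $\mathcal{F}(U)=0$. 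Since $\gamma_0$ is a geodesic for $g_\epsilon^{gh}$ but not for $g_\epsilon^{hk}$, the value $\mathcal{F}(0)$ is precisely the obstruction created by the difference of the two metrics along $\gamma_0$, and is controlled by \ref{eq:metric estimate} and its derivative bounds. First I would Taylor-expand around $U=0$, writing
$$\mathcal{F}(U) = \mathcal{F}(0) + J_{\gamma_0}(U) + Q(U),$$
where $J_{\gamma_0}$ is the $g_\epsilon^{gh}$-Jacobi operator inverted in Proposition \ref{prop:Jacobi} and $Q$ collects both the genuinely higher-order remainder and the small difference $J^{hk}_{\gamma_0}-J^{gh}_{\gamma_0}$ between the two Jacobi operators. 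The geodesic equation then becomes the fixed-point problem
$$U = \Phi(U) := -\,J_{\gamma_0}^{-1}\big(\mathcal{F}(0) + Q(U)\big),$$
to be solved in a small ball of $W^{2,2}(S^1, \gamma_0^*(TX)^\perp)$.

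The hard part will be that Proposition \ref{prop:Jacobi} only provides $\|J_{\gamma_0}^{-1}\| \lesssim \epsilon^{-1}$, so this $\epsilon^{-1}$ loss has to be beaten by the smallness of $\mathcal{F}(0)$ and $Q$. To manage this I would use the splitting $\gamma_0^*(TX)^\perp = T_0 \oplus T_p$ from the proof of that proposition, where the $\epsilon^{-1}$ loss is concentrated on the parallel zero-mode part $T_0$, while on the higher modes $T_p$ the inverse is in fact bounded from $L^2$ to $W^{2,2}$. The purpose of splitting the geodesic equation into its $T_0$- and $T_p$-pieces is to verify that the $T_0$-projections of $\mathcal{F}(0)$ and of $Q(U)$ carry an extra power of $\epsilon$ relative to their full norms, so that after applying $J_{\gamma_0}^{-1}$ the two contributions balance at the same order. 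Estimating $\|\mathcal{F}(0)\|$ from \ref{eq:metric estimate}, and showing that $Q$ is Lipschitz on small balls with an $\epsilon$-small constant — its linear part $J^{hk}_{\gamma_0}-J^{gh}_{\gamma_0}$ being of higher order by the metric approximation and its genuinely quadratic part controlled by $\|U\|+\|U'\|$, with all $\epsilon$-powers tracked through the collapsing geometry — I would show that $\Phi$ maps the ball $\lbrace \|U\|_{W^{2,2}} \le C\epsilon^{9/5} \rbrace$ into itself and is a contraction there for $\epsilon<\epsilon_0$. Banach's fixed-point theorem then yields a unique such $U$ with $\|U\|_{W^{2,2}} \lesssim \epsilon^{9/5}$ and the corresponding closed geodesic $\gamma=\gamma_U$.

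For the Hausdorff estimate I would convert the $W^{2,2}$-bound on $U$ into a pointwise bound by Sobolev embedding, but carried out on the collapsing circle $\gamma_0$, whose $g_\epsilon^{hk}$-length is of order $\epsilon$. The scale-dependent embedding $W^{1,2} \hookrightarrow C^0$ on a circle of length $\sim\epsilon$ costs a factor $\epsilon^{-1/2}$, coming from the constant mode, so that
$$d_H(\gamma_0,\gamma) \lesssim \|U\|_{C^0} \lesssim \epsilon^{-1/2}\,\|U\|_{W^{2,2}} \lesssim \epsilon^{-1/2}\cdot\epsilon^{9/5} = \epsilon^{13/10},$$
which is the claimed bound.

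Finally, for the index I would argue by continuity of the spectrum. By Proposition \ref{prop:Jacobi eigenvalues 2} the negative eigenvalues of $-J_{\gamma_0}$ for $g_\epsilon^{gh}$ are of size $\sim\epsilon$ while the positive ones are of size $\sim\epsilon^{-2}$, so in particular the spectrum is bounded away from $0$ by a gap of order $\epsilon$. Since the difference between the Jacobi operator of $\gamma$ for $g_\epsilon^{hk}$ and that of $\gamma_0$ for $g_\epsilon=g_\epsilon^{gh}$ is of strictly higher order in $\epsilon$ — both the metrics and the curves being close by the previous steps — it is too small to move any eigenvalue across $0$. Interpolating along a path of nondegenerate geodesics joining the two data, no eigenvalue crosses $0$, the number of negative eigenvalues stays constant along the path, and hence $\mathrm{index}(\gamma, g_\epsilon^{hk}) = \mathrm{index}(\gamma_0, g_\epsilon)$, which completes the argument.
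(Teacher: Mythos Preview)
Your overall scheme --- set up the normal-exponential parametrization, Taylor expand the geodesic operator as $\mathcal{F}(0)+J_{\gamma_0}(U)+Q(U)$, invert $J_{\gamma_0}$ via Proposition~\ref{prop:Jacobi}, and close by a contraction --- is exactly what the paper does, and your treatments of the Hausdorff bound (scale-aware Sobolev on a circle of length $\sim\epsilon$) and of the index (spectral gap $\sim\epsilon$ versus a perturbation of strictly higher order) also match the paper.

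The one point to correct is your proposed mechanism for beating the $\epsilon^{-1}$ loss. You suggest splitting into $T_0\oplus T_p$ and claim that the $T_0$-components of $\mathcal{F}(0)$ and $Q(U)$ carry an \emph{extra} power of $\epsilon$. There is no reason this should hold: $\mathcal{F}(0)$ is built from the difference of Christoffel symbols of $g_\epsilon^{hk}$ and $g_\epsilon^{gh}$ contracted with $\dot\gamma_0\otimes\dot\gamma_0$, and nothing forces its fiberwise average to be smaller than its oscillatory part. The paper does \emph{not} use this splitting in the fixed-point step; it simply uses the crude bound $\|J_{\gamma_0}^{-1}\|\lesssim\epsilon^{-1}$ together with the fact that the error already has enough powers of $\epsilon$: from \eqref{eq:metric estimate} one gets $\|\mathcal{F}(0)\|_{L^2}\lesssim\epsilon^{\alpha+1/2}$ with $\alpha=\tfrac{11-2\delta}{5}>2$, and the quadratic remainder satisfies $\|Q(U)\|_{L^2}\lesssim\|U\|_{W^{2,2}}^2$. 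The two contraction conditions then reduce to finding a radius $R$ with $2\epsilon^{\alpha+1/2}<R<\tfrac12\epsilon^{2}$, which is possible precisely because $\alpha+\tfrac12>2$; choosing $R\sim\epsilon^{14/5}$ (after fixing $\delta\in(-\tfrac12,-\tfrac14)$) gives $\|U\|_{W^{2,2}}\lesssim\epsilon^{-1}R\sim\epsilon^{9/5}$. So drop the $T_0$/$T_p$ refinement here and run the contraction directly; everything else in your outline goes through as written.
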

	
	Before diving into the proof, it is convenient to first compute how far $\gamma_0$ is from being a geodesic with respect to $g_\epsilon^{hk}$ and compute some estimates on the linearized geodesic equation. 
	
	Let $s_0>0$ be a small parameter to be fixed at a later stage and $s \in (0,s_0)$. Then, we write the geodesic equation as the vanishing locus of the function
	$$f_s:W^{2,2}(S^1 , \gamma_0^* TX^\perp) \to L^{2}(S^1 , \gamma_s^* TX^\perp)$$
	given by
	$$f_s(V)=(\nabla^{g_\epsilon^{hk}}_{\dot{\gamma}_s} \dot{\gamma_s})^{\perp_{g_\epsilon^{hk}} },$$ 
	where $\gamma_s(t)=\exp_{\gamma_0(t)}(sV(t))$, $(\cdot)^{\perp_{g_\epsilon^{hk}} }$ denotes the projection on the normal bundle to $\langle \dot{\gamma}_s \rangle \subset TX|_{\gamma_s(S^1)}$ using the metric $g_\epsilon^{hk}$, and $\nabla^{g_\epsilon^{hk}}$ is its Levi-Civita connection.

	\subsubsection*{The error term}

	Recall from equation \ref{eq:metric estimate} that 
	\begin{equation}\label{eq:compare metrics}
		g_\epsilon^{hk}= g_\epsilon + \epsilon^{\alpha} \Upsilon_\epsilon ,
	\end{equation}
	for $\Upsilon_\epsilon$ a symmetric tensor satisfying $|\Upsilon_\epsilon|_{g_\epsilon} \lesssim 1$ with derivatives and $\alpha=\frac{11-2\delta}{5}$ with $\delta \in (-1/2,0)$. Now, choose a local coordinate system $\lbrace x_\mu \rbrace_{\mu=1}^4$ such that $|\partial_{x_\mu}|_{g_\epsilon^{hk}} \lesssim 1$ (for sufficiently small $\epsilon \ll 1$ this also implies $|\partial_{x_\mu}|_{g_\epsilon} \lesssim 1$) for $\mu=1,2,3,4$. In the domain of this coordinate system reads, the geodesic equation for $t \mapsto \gamma_0(t)$ with respect to $g_\epsilon^{hk}$ reads
	\begin{align*}
		 \nabla_{\dot{\gamma}_0}^{g_\epsilon^{hk}} \dot{\gamma}_0  & = \left( \ddot{\gamma}_0^\nu + ((\Gamma^{hk}_\epsilon)^\nu_{\mu \lambda} \circ \gamma_0 ) \dot{\gamma_0}^\mu \dot{\gamma_0}^\lambda \right) \partial_{x_\nu},
	\end{align*}
	where $(\Gamma^{hk}_\epsilon)^\nu_{\mu \lambda}$ denotes the Christoffel symbols of the metric $g_\epsilon^{hk}$. Recall that, for a generic metric $g$, these are computed using the metric via the formula
	$$\Gamma^\nu_{\mu \lambda} = \frac{1}{2} g^{\nu \rho} \left( \partial_{\lambda} g_{\rho \mu} + \partial_\mu g_{\rho \lambda} -\partial_\rho g_{\mu \lambda} \right).$$
	Hence, using equation \ref{eq:compare metrics} we find that
	\begin{equation}\label{eq:Christoffel symbols}
		(\Gamma^{hk}_\epsilon)^\nu_{\mu \lambda} = (\Gamma_\epsilon)^\nu_{\mu \lambda} + \epsilon^\alpha (\delta \Gamma_\epsilon)^\nu_{\mu \lambda}, 
	\end{equation}
	where $ (\Gamma_\epsilon)^\nu_{\mu \lambda}$ denotes the Christoffel symbols of the metric $g_\epsilon$ and $|(\delta \Gamma_\epsilon)^\nu_{\mu \lambda}| \lesssim 1$. Inserting this into the geodesic equation yields
	\begin{align*}
		\nabla_{\dot{\gamma}_0}^{g_\epsilon^{hk}} \dot{\gamma}_0  & = \left( \ddot{\gamma}_0^\nu + ((\Gamma_\epsilon)^\nu_{\mu \lambda} \circ \gamma_0 ) \dot{\gamma_0}^\mu \dot{\gamma_0}^\lambda \right) \partial_{x_\nu} + \epsilon^\alpha \left(  ((\delta \Gamma_\epsilon)^\nu_{\mu \lambda} \circ \gamma_0 ) \dot{\gamma_0}^\mu \dot{\gamma_0}^\lambda \right) \partial_{x_\nu} \\
		& = \epsilon^\alpha \left(  ((\delta \Gamma_\epsilon)^\nu_{\mu \lambda} \circ \gamma_0 ) \dot{\gamma_0}^\mu \dot{\gamma_0}^\lambda \right) \partial_{x_\nu} ,
	\end{align*}
	as $\gamma_0$ is a geodesic for the metric $g_\epsilon$. Given that $|\partial_{x_\mu}| \lesssim 1$ have therefore found that
	\begin{equation}\label{eq:C0 estimate error term} 
		|\nabla_{\dot{\gamma}_0}^{g_\epsilon^{hk}} \dot{\gamma}_0 |_{g_\epsilon^{hk}} \lesssim \epsilon^\alpha ,
	\end{equation}
	and similarly we find that $|\nabla_{\dot{\gamma}_0}^{g_\epsilon^{hk}} \dot{\gamma}_0 |_{g_\epsilon} \lesssim \epsilon^\alpha$.

	Furthermore, the $L^2$-metric on $S^1$ induced by $g_\epsilon$ satisfies $\gamma_0^* g_\epsilon = \frac{\epsilon^2}{1+\epsilon h} \gamma_0^*\theta^2=dt^2$ for $t \in [0, 2\pi \frac{\epsilon}{\sqrt{1+\epsilon h}} ]$. However, it is at times convenient to work with a parameter which is independent of $\epsilon$. Hence, we rescale the coordinate $t$, which is the arclength parameter, to $u = \frac{\sqrt{1+\epsilon h}}{\epsilon} t \in [0,2\pi]$ using which the metric becomes $\gamma_0^*g_{\epsilon}=\frac{\epsilon^2}{1+\epsilon h} du^2$. Furthermore, using again $g_\epsilon^{hk}= g_\epsilon + \epsilon^{\alpha} \Upsilon_\epsilon$, we find that $\gamma_0^* g_\epsilon^{hk} = \frac{\epsilon^2}{1+\epsilon h} du^2 + O(\epsilon^\alpha)$. Thus, as $\alpha >2$, for sufficiently small $\epsilon$ we can estimate the $L^2$-norm of $\nabla_{\dot{\gamma}_0}^{g_\epsilon^{hk}} \dot{\gamma}_0$ as follows 
	\begin{align}\label{eq:L2 estimate error term} 
		\|\nabla_{\dot{\gamma}_0}^{g_\epsilon^{hk}} \dot{\gamma}_0 \|_{L^2} & \lesssim \left( \int_0^{2\pi}  |\nabla_{\dot{\gamma}_0}^{g_\epsilon^{hk}} \dot{\gamma}_0 |_{g_\epsilon}^2 \epsilon du  \right)^{\frac{1}{2}} \lesssim \epsilon^{\alpha + \frac{1}{2}}.
	\end{align}

	\subsubsection*{Comparing Jacobi operators}\label{ss:Comparing Jacobi operator}
	
	We start by recalling the definition of the Jacobi operator $J^{g}_{\gamma_0}(\cdot)$ of a curve $\gamma_0$ with respect to a generic metric $g$. By definition, this is given by
	$$J_{\gamma_0}(V):=\left( \frac{D}{dt} \frac{DV}{dt} - R^g \left(\dot{\gamma}_0 , V \right) \dot{\gamma}_0 \right)^\perp,$$
	with $R^g(\cdot, \cdot) \cdot$ the Riemann curvature tensor of $g$ and $(\cdot)^\perp$ the orthogonal component with respect to the metric $g$. Our next goal is to compare $J_{\gamma_0}^{g_\epsilon}$ and $J_{\gamma_0}^{g_\epsilon^{hk}}$ and to further compare these with the linearization of the geodesic equation for $g^{hk}_\epsilon$ at $\gamma_0$.
	
	Using Koszul formula, we find that for any vector fields $X,Y,Z$
	\begin{align*}
		2g^{hk}_\epsilon (\nabla^{g_\epsilon^{hk}}_X Y, Z) & = X \left( g_\epsilon^{hk} (Y,Z) \right) + Y \left( g_\epsilon^{hk} (Z,X) \right) - Z \left( g_\epsilon^{hk} (X,Y) \right) \\
		& \ \ \ \ + g_\epsilon ([X,Y],Z) + g_\epsilon ([Z,X],Y) - g_\epsilon ([Y,Z],X) \\
		& = X \left( g_\epsilon (Y,Z) \right) + Y \left( g_\epsilon (Z,X) \right) - Z \left( g_\epsilon (X,Y) \right) \\
		& \ \ \ \ + g_\epsilon ([X,Y],Z) + g_\epsilon ([Z,X],Y) - g_\epsilon ([Y,Z],X) \\
		& \ \ \ \ + \epsilon^\alpha \left( X \left( \Upsilon_\epsilon (Y,Z) \right) + Y \left( \Upsilon_\epsilon (Z,X) \right) - Z \left( \Upsilon_\epsilon (X,Y) \right) \right) \\
		& \ \ \ \ + \epsilon^\alpha \left( \Upsilon_\epsilon ([X,Y],Z) + \Upsilon_\epsilon ([Z,X],Y) - \Upsilon_\epsilon ([Y,Z],X) \right) \\
		& = 2g_\epsilon (\nabla^{g_\epsilon}_X Y, Z) + \epsilon^\alpha \left( \tilde{\Upsilon}_\epsilon (X,Z) \right) (Y),
	\end{align*} 
	where
	\begin{align*}
		\left( \tilde{\Upsilon}_\epsilon (X,Z) \right) (Y) & =   X \left( \Upsilon_\epsilon (Y,Z) \right) + Y \left( \Upsilon_\epsilon (Z,X) \right) - Z \left( \Upsilon_\epsilon (X,Y) \right)  \\
		& \ \ \ \ +  \Upsilon_\epsilon ([X,Y],Z) + \Upsilon_\epsilon ([Z,X],Y) - \Upsilon_\epsilon ([Y,Z],X) ,
	\end{align*}
	is tensorial in $X,Z$ but not in $Y$. Furthermore, if $\lbrace e_\mu \rbrace_{\mu=1}^4$ denotes an orthonormal framing for the metric $g_\epsilon^{hk}$, we can write 
	\begin{align*}
		\left( \tilde{\Upsilon}_\epsilon (X,Z) \right) (Y) & = \left( \tilde{\Upsilon}_\epsilon (X,e_\mu) \right) (Y) \left( g_\epsilon^{hk} ( e_\mu, Z) \right) \\
		& = g_\epsilon^{hk} \left(  \left( \tilde{\Upsilon}_\epsilon (X,e_\mu) \right) (Y) e_\mu, Z \right),
	\end{align*}
	and
	\begin{align*}
		g_\epsilon (\nabla^{g_\epsilon}_X Y, Z) & = g_\epsilon^{hk} (\nabla^{g_\epsilon}_X Y, Z) - \epsilon^\alpha \Upsilon_\epsilon( \nabla^{g_\epsilon}_X Y , Z) \\
		& = g_\epsilon^{hk} (\nabla^{g_\epsilon}_X Y, Z) - \epsilon^\alpha g^{hk}_\epsilon \left( \Upsilon_\epsilon( \nabla^{g_\epsilon}_X Y , e_\mu) e_\mu, Z \right) \\
		& = g_\epsilon^{hk} \left( \nabla^{g_\epsilon}_X Y  - \epsilon^\alpha  \Upsilon_\epsilon( \nabla^{g_\epsilon}_X Y , e_\mu) e_\mu, Z \right) .
	\end{align*}
	Finally, inserting both of these into the computation above for $2g^{hk}_\epsilon (\nabla^{g_\epsilon^{hk}}_X Y, Z) $ yields
	\begin{align*}
		2g^{hk}_\epsilon (\nabla^{g_\epsilon^{hk}}_X Y, Z) & = 2g_\epsilon \left(   \nabla^{g_\epsilon}_X Y  - \epsilon^\alpha  \Upsilon_\epsilon( \nabla^{g_\epsilon}_X Y , e_\mu) e_\mu + \frac{\epsilon^\alpha}{2}  \left( \tilde{\Upsilon}_\epsilon (X,e_\mu) \right) (Y) e_\mu , Z  \right) .
	\end{align*} 
	As this is valid for all $Z$ we conclude that
	\begin{equation*}
		\nabla^{g_\epsilon^{hk}}_X Y = \nabla^{g_\epsilon}_X Y  - \epsilon^\alpha  \Upsilon_\epsilon( \nabla^{g_\epsilon}_X Y , e_\mu) e_\mu + \frac{\epsilon^\alpha}{2}  \left( \tilde{\Upsilon}_\epsilon (X,e_\mu) \right) (Y) e_\mu  .
	\end{equation*}
	The exact formula of this equation will not be necessary for us and all we will need is that for any vector fields $X,Y$ we have
	$$\nabla^{g_\epsilon^{hk}}_X Y = \nabla^{g_\epsilon}_X Y + \epsilon^{\alpha} O_X(Y),$$
	for $O_X(Y)$ a quantity which is tensorial in $X$ but not in $Y$ and which is of order $O(1)$ with derivatives, independently of $\epsilon \ll 1$. Inserting this into the definition of the Riemann curvature tensor we find that the Riemann curvature tensors of these two metrics are related via
	$$R^{g_\epsilon^{hk}}(\cdot , \cdot ) \cdot = R^{g_\epsilon}(\cdot , \cdot ) \cdot + \epsilon^{\alpha} O_R(\cdot , \cdot , \cdot),$$
	with again $O_R(\cdot , \cdot , \cdot) \sim O(1)$. From these two observations we find that the Jacobi operators of $\gamma_0$ with respect to $g_\epsilon^{hk}$ and $g_\epsilon$ differ by a term of order $\epsilon^\alpha$, i.e. for any vector field $V$
	\begin{equation}\label{eq:comparing Jacobi operator}
		J_{\gamma_0}^{g_\epsilon^{hk}}(V) = J_{\gamma_0}^{g_\epsilon}(V) + \epsilon^\alpha \delta J(V), 
	\end{equation}
	where $|\delta J(V)| \lesssim |V|$ pointwise.

	\subsubsection*{Linearized geodesic equation}
	
	However, the linearization of the geodesic equation $f_s(V)=0$ is not exactly the Jacobi operator because $t \mapsto \gamma_0(t)$ is not a geodesic for the metric $g^{hk}_\epsilon$. The next lemma computes the linearization of the geodesic equation.
	
	\begin{lemma}\label{lem:perpendicular}
		Let $\gamma_s(t)=\exp_{\gamma_0(t)}(sV(t))$ as before. Then, 
		\begin{align*}
			\frac{D}{ds} \Big\vert_{s=0} (\nabla^{g^{hk}_\epsilon}_{\dot{\gamma}_s} \dot{\gamma}_s )^{\perp_{g^{hk}_\epsilon}} & = J_{\gamma_0}^{g^{hk}_\epsilon}(V)  - \langle \nabla^{g^{hk}_\epsilon}_{\dot{\gamma}_0} \dot{\gamma}_0 , \nabla^{g^{hk}_\epsilon}_{\dot{\gamma}_0}V \rangle_{g^{hk}_\epsilon} \frac{\dot{\gamma}_0}{|\dot{\gamma}_0|_{g^{hk}_\epsilon}^2} - \langle \nabla^{g^{hk}_\epsilon}_{\dot{\gamma}_0} \dot{\gamma}_0 , \dot{\gamma}_0 \rangle_{g^{hk}_\epsilon} \frac{\nabla^{g^{hk}_\epsilon}_{\dot{\gamma}_0}V}{|\dot{\gamma}_0|_{g^{hk}_\epsilon}^2}.
		\end{align*}
	\end{lemma}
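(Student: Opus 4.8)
The plan is to treat the left-hand side as an honest $s$-derivative of the variation $f(s,t)=\exp_{\gamma_0(t)}(sV(t))$ and to separate the two sources of $s$-dependence: the acceleration vector and the normal projection. Write $T_s=\partial_t f$ for the velocity of $\gamma_s$ and $A_s=\nabla^{g^{hk}_\epsilon}_{T_s}T_s$ for its acceleration, so the object to differentiate is $(A_s)^{\perp}=A_s-\frac{\langle A_s,T_s\rangle}{|T_s|^2}T_s$, with all connections and inner products taken in $g^{hk}_\epsilon$. The two tools I would use are the symmetry of mixed covariant derivatives, $\frac{D}{ds}\partial_t f=\frac{D}{dt}\partial_s f$, and the curvature commutation identity $\frac{D}{ds}\frac{D}{dt}W-\frac{D}{dt}\frac{D}{ds}W=R(\partial_s f,\partial_t f)W$.

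First I would differentiate the unprojected acceleration. Applying the curvature identity with $W=\partial_t f$ and then the symmetry relation gives, at $s=0$ (where $\partial_s f=V$ and $\partial_t f=\dot\gamma_0$), the equality $\frac{D}{ds}\big\vert_{s=0}A_s=\frac{D^2V}{dt^2}-R(\dot\gamma_0,V)\dot\gamma_0$, which is exactly the full (un-projected) Jacobi field operator. I would also record the two auxiliary derivatives $\frac{D}{ds}\big\vert_{s=0}T_s=\nabla^{g^{hk}_\epsilon}_{\dot\gamma_0}V$ and $\frac{d}{ds}\big\vert_{s=0}|T_s|^2=2\langle\nabla^{g^{hk}_\epsilon}_{\dot\gamma_0}V,\dot\gamma_0\rangle$, both immediate from the symmetry relation.

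Next I would differentiate the projection by the product and quotient rules, $\frac{D}{ds}(A_s)^{\perp}=\frac{D}{ds}A_s-\big(\frac{d}{ds}\tfrac{\langle A_s,T_s\rangle}{|T_s|^2}\big)T_s-\tfrac{\langle A_s,T_s\rangle}{|T_s|^2}\frac{D}{ds}T_s$, and substitute the derivatives above together with $A_0=\nabla^{g^{hk}_\epsilon}_{\dot\gamma_0}\dot\gamma_0$. The scalar derivative produces a contribution $-\tfrac{\langle \frac{D^2V}{dt^2}-R(\dot\gamma_0,V)\dot\gamma_0,\dot\gamma_0\rangle}{|\dot\gamma_0|^2}\dot\gamma_0$; using the antisymmetry $\langle R(\dot\gamma_0,V)\dot\gamma_0,\dot\gamma_0\rangle=0$ to simplify, this combines with the first term into the \emph{normally projected} Jacobi operator $J^{g^{hk}_\epsilon}_{\gamma_0}(V)$. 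The remaining pieces are precisely the two terms carrying $A_0=\nabla^{g^{hk}_\epsilon}_{\dot\gamma_0}\dot\gamma_0$, namely $-\langle A_0,\nabla^{g^{hk}_\epsilon}_{\dot\gamma_0}V\rangle\frac{\dot\gamma_0}{|\dot\gamma_0|^2}$ from the scalar derivative and $-\langle A_0,\dot\gamma_0\rangle\frac{\nabla^{g^{hk}_\epsilon}_{\dot\gamma_0}V}{|\dot\gamma_0|^2}$ from differentiating $T_s$, yielding the asserted identity.

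The one delicate point is the bookkeeping of the projection's $s$-dependence; there is no hard analysis here, only the organizing observation that \emph{every} term beyond $J^{g^{hk}_\epsilon}_{\gamma_0}(V)$ is proportional to the off-geodesic acceleration $A_0=\nabla^{g^{hk}_\epsilon}_{\dot\gamma_0}\dot\gamma_0$, which vanishes exactly when $\gamma_0$ is an honest $g^{hk}_\epsilon$-geodesic. This is what makes the lemma useful downstream: by the estimate \eqref{eq:C0 estimate error term} one has $|A_0|_{g^{hk}_\epsilon}\lesssim\epsilon^{\alpha}$, so the linearization of the geodesic operator $f_s$ at $\gamma_0$ is a size-$\epsilon^{\alpha}$ perturbation of $J^{g^{hk}_\epsilon}_{\gamma_0}$, which by \eqref{eq:comparing Jacobi operator} differs from $J^{g_\epsilon}_{\gamma_0}$ by $O(\epsilon^{\alpha})$ and is invertible with the quantitative bound of Proposition \ref{prop:Jacobi}. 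I therefore expect the real work to lie not in this lemma but in the surrounding argument, where one must check that these $A_0$-terms are genuinely lower order so that a contraction-mapping/inverse-function scheme for $f_s(V)=0$ converges.
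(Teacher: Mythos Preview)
Your approach is essentially identical to the paper's: both write the projection as $A_s-\langle A_s,T_s\rangle\,|T_s|^{-2}\,T_s$, differentiate by the product rule, compute $\tfrac{D}{ds}A_s$ via the curvature commutation identity, and collect terms so that the unprojected Jacobi expression together with its tangential part reassemble into $J_{\gamma_0}^{g^{hk}_\epsilon}(V)$.

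One small bookkeeping point you (and, in fact, the paper) pass over: in the quotient rule for $\tfrac{d}{ds}\bigl(\langle A_s,T_s\rangle/|T_s|^2\bigr)$ you record $\tfrac{d}{ds}|T_s|^2=2\langle\nabla_{\dot\gamma_0}V,\dot\gamma_0\rangle$ but then never use it; this produces an additional contribution
\[
+\,\frac{2\langle A_0,\dot\gamma_0\rangle\,\langle\nabla_{\dot\gamma_0}V,\dot\gamma_0\rangle}{|\dot\gamma_0|^4}\,\dot\gamma_0
\;=\;-\,\frac{2\langle A_0,\dot\gamma_0\rangle\,\langle V,A_0\rangle}{|\dot\gamma_0|^4}\,\dot\gamma_0,
\]
where the equality uses $\langle V,\dot\gamma_0\rangle=0$. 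This term is quadratic in $A_0=\nabla^{g^{hk}_\epsilon}_{\dot\gamma_0}\dot\gamma_0$, hence of order $\epsilon^{2\alpha}$ by \eqref{eq:C0 estimate error term}, and is harmless for the downstream contraction-mapping argument; but strictly speaking it should either appear in the identity or be explicitly discarded as higher order.
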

	\begin{proof}
		In order to ease notation, during the proof we shall drop $g^{hk}_\epsilon$ from all the sub and super-scripts. Explicitly, we have $(\nabla_{\dot{\gamma}_s} \dot{\gamma}_s )^\perp =\nabla_{\dot{\gamma}_s} \dot{\gamma}_s - \langle \nabla_{\dot{\gamma}_s} \dot{\gamma}_s , \frac{\dot{\gamma}_s}{|\dot{\gamma}_s|} \rangle \frac{\dot{\gamma}_s}{|\dot{\gamma}_s|} $. Then, taking derivatives and evaluating at $s=0$ we find that 
		\begin{align}\nonumber
			\frac{D}{ds}  (\nabla_{\dot{\gamma}_s} \dot{\gamma}_s )^\perp & = \frac{D}{ds} \nabla_{\dot{\gamma}_s} \dot{\gamma}_s  - \langle \frac{D}{ds} \nabla_{\dot{\gamma}_s} \dot{\gamma}_s , \frac{\dot{\gamma}_s}{|\dot{\gamma}_s|} \rangle \frac{\dot{\gamma}_s}{|\dot{\gamma}_s|} - \langle \nabla_{\dot{\gamma}_s} \dot{\gamma}_s , \frac{D}{ds}  \frac{\dot{\gamma}_s}{|\dot{\gamma}_s|} \rangle \frac{\dot{\gamma}_s}{|\dot{\gamma}_s|} - \langle \nabla_{\dot{\gamma}_s} \dot{\gamma}_s , \frac{\dot{\gamma}_s}{|\dot{\gamma}_s|} \rangle \frac{D}{ds} \frac{\dot{\gamma}_s}{|\dot{\gamma}_s|} \\ \label{eq:DDd proof of lemma}
			& = \left( \frac{D}{ds} \nabla_{\dot{\gamma}_s} \dot{\gamma}_s \right)^\perp - \langle \nabla_{\dot{\gamma}_s} \dot{\gamma}_s , \frac{D}{ds}  \frac{\dot{\gamma}_s}{|\dot{\gamma}_s|} \rangle \frac{\dot{\gamma}_s}{|\dot{\gamma}_s|} - \langle \nabla_{\dot{\gamma}_s} \dot{\gamma}_s , \frac{\dot{\gamma}_s}{|\dot{\gamma}_s|} \rangle \frac{D}{ds} \frac{\dot{\gamma}_s}{|\dot{\gamma}_s|}
		\end{align}
		and we shall now compute each term separately. The first is given by
		\begin{align*}
			\left( \frac{D}{ds} \nabla_{\dot{\gamma}_s} \dot{\gamma}_s \right)^\perp & =  \left( \frac{D}{ds} \frac{D}{dt} \frac{d \gamma_s}{dt} \right)^\perp   \\
			& =  \left( \frac{D}{dt} \frac{D}{ds} \frac{d \gamma_s}{dt} - R \left( \frac{d \gamma_s}{dt} ,  \frac{d\gamma_s}{ds}\right) \frac{d \gamma_s}{dt} \right)^\perp  \\ \label{eq:DDd proof of lemma}
			& =  \left( \frac{D}{dt} \frac{DV}{dt} - R \left(\dot{\gamma}_s , V \right) \dot{\gamma}_s \right)^\perp ,
		\end{align*}
		and in order to compute the remaining two terms we need to find $\frac{D}{ds} \frac{\dot{\gamma}_s}{|\dot{\gamma}_s|}$ which is given by
		\begin{align*}
			\frac{D}{ds} \frac{\dot{\gamma}_s}{|\dot{\gamma}_s|} & =  \frac{1}{|\dot{\gamma}_s|} \frac{D}{ds} \dot{\gamma}_s -  \frac{\dot{\gamma}_s}{2|\dot{\gamma}_s|^3} \partial_s |\dot{\gamma}_s|^2 .
		\end{align*}
		On the other hand, we have 
		$$ \partial_s |\dot{\gamma}_s|^2 = 2 \langle \frac{D}{ds}\dot{\gamma}_s , \dot{\gamma}_s \rangle = 2 \langle \frac{DV}{dt} , \dot{\gamma}_s \rangle = 2 \partial_t \langle V, \dot{\gamma}_s \rangle -2 \langle V, \nabla_{\dot{\gamma}_s} \dot{\gamma}_s \rangle= -2 \langle V, \nabla_{\dot{\gamma}_s} \dot{\gamma}_s \rangle$$ 
		because $\langle V, \dot{\gamma}_s \rangle=0$, and inserting this into the above gives
		\begin{align*}
			\frac{D}{ds} \frac{\dot{\gamma}_s}{|\dot{\gamma}_s|} & =  \frac{1}{|\dot{\gamma}_s|} \frac{DV}{dt}  -  \frac{ \langle V, \nabla_{\dot{\gamma}_s} \dot{\gamma}_s \rangle}{|\dot{\gamma}_s|^3} \ \dot{\gamma}_s .
		\end{align*}
		Inserting all this into equation \ref{eq:DDd proof of lemma} above yields
		\begin{align*}
			\frac{D}{ds}  (\nabla_{\dot{\gamma}_s} \dot{\gamma}_s )^\perp & = \left( \frac{D}{dt} \frac{DV}{dt} - R \left(\dot{\gamma}_s , V \right) \dot{\gamma}_s \right)^\perp  - \langle \nabla_{\dot{\gamma}_s} \dot{\gamma}_s , \nabla_{\dot{\gamma}_s}V \rangle \frac{\dot{\gamma}_s}{|\dot{\gamma}_s|^2} - \langle \nabla_{\dot{\gamma}_s} \dot{\gamma}_s , \dot{\gamma}_s \rangle \frac{\nabla_{\dot{\gamma}_s}V}{|\dot{\gamma}_s|^2},
		\end{align*}
		which is the result on the statement.
	\end{proof}

	\begin{proof}[Proof of Proposition \ref{prop:solving the geodesic equation}]

	
	We want to take a Taylor expansion of $f_s(V)$ in $s$. However, in order to achieve this we must regard $f_s$ as taking values in the same space independent of $s$, which can be achieved by trivializing $TX$ in an open neighborhood $U$ of $\lbrace \gamma_0(t)\rbrace_{t}$ containing the image of all possible curves $t \mapsto \gamma_s(t)$ which we shall be considering. In this situation, we have $TX|_U \cong U \times \mathbb{R}^4$ and implicitly composing with this isomorphism we shall regard $f_s$ as a family of maps
	$$f_s:W^{2,2}(S^1 ,  \langle \dot{\gamma}_0 \rangle^\perp ) \to L^{2}(S^1 , \langle \dot{\gamma_s} \rangle^\perp ),$$
	where we view $\dot{\gamma_0}$ and $\dot{\gamma_s}$ as vectors in $\mathbb{R}^4$. Next, using the fact that the curves $\gamma_s$ have trivial normal bundle, we shall fix an identification $\psi_s : \langle \dot{\gamma_s} \rangle^{\perp} \to \langle \dot{\gamma_0} \rangle^{\perp}  \cong \mathbb{R}^3$ and consider the map 
	$$F_s:= \psi_s \circ f_s : W^{2,2}(S^1 ,  \mathbb{R}^3 ) \to L^{2}(S^1 ,  \mathbb{R}^3 ),$$
	with $\psi_0$ the identity. Then, taking a Taylor expansion of $F_s(V)$ as a function of $s$ around $s=0$, gives the geodesic equation
	\begin{align*}
		0 & = F_0(V) + s \frac{d}{ds} \Big\vert_{s=0} F_s(V) + \frac{s^2}{2} R_s(V) \\
		& = (\nabla^{g_\epsilon^{hk}}_{\dot{\gamma}_0} \dot{\gamma_0})^{\perp_{g_\epsilon^{hk}} } + s \left[ \frac{d\psi_s}{ds}\Big\vert_{s=0}\left( (\nabla^{g_\epsilon^{hk}}_{\dot{\gamma}_0} \dot{\gamma_0})^{\perp_{g_\epsilon^{hk}} }  \right) + \frac{D}{ds}\Big\vert_{s=0}(\nabla^{g_\epsilon^{hk}}_{\dot{\gamma}_s} \dot{\gamma_s})^{\perp_{g_\epsilon^{hk}} }  \right] + \frac{s^2}{2} R_s(V) ,
	\end{align*}
	with the remainder term satisfying $\|R_s(V)\|_{L^2} \lesssim \| V \|^2_{W^{1,2}}$, see the Appendix \ref{appendix:geodesic equation remainder}.
	Furthermore, from combining equation \ref{eq:L2 estimate error term} with Lemma \ref{lem:perpendicular}, we find that $|\nabla^{g^{hk}_\epsilon}_{\dot{\gamma}_0} \dot{\gamma}_0| \lesssim \epsilon^{\alpha+1/2}$ and so $\frac{D}{ds}\Big\vert_{s=0}(\nabla^{g_\epsilon^{hk}}_{\dot{\gamma}_s} \dot{\gamma_s})^{\perp_{g_\epsilon^{hk}} }$ differs from $J_{\gamma_0}^{g^{hk}_\epsilon}(V)$ by a quantity of order $\epsilon^{\alpha+1/2}$. On the other hand, equation \ref{eq:comparing Jacobi operator} shows that $J_{\gamma_0}^{g^{hk}_\epsilon}(V)$ differs from $J_{\gamma_0}^{g_\epsilon}(V)$ by a quantity of order $\epsilon^\alpha$. Hence,
	$$\frac{D}{ds}\Big\vert_{s=0}(\nabla^{g_\epsilon^{hk}}_{\dot{\gamma}_s} \dot{\gamma_s})^{\perp_{g_\epsilon^{hk}} }= J_{\gamma_0}^{g_\epsilon}(V) + O(\epsilon^\alpha) .$$
	Furthermore, as $\epsilon^\alpha \ll \epsilon$ it follows from Proposition \ref{prop:Jacobi} that the invertibility of $J_{\gamma_0}^{g_\epsilon}:W^{2,2} \to L^2$ translates into the invertibility of $\frac{D}{ds}\vert_{s=0}(\nabla^{g_\epsilon^{hk}}_{\dot{\gamma}_s} \dot{\gamma_s})^{\perp_{g_\epsilon^{hk}} }$ and we write its right inverse as $P:L^2 \to W^{2,2}$. Notice also, that in the same way as $\| (J_{\gamma_0}^{g_\epsilon})^{-1} \|\lesssim \epsilon^{-1}$ we also must have $\|P \| \lesssim \epsilon^{-1}$. 
	
	Next, we search for a solution of the form $V=P(W)$. In this situation, the geodesic equation becomes 
	\begin{align*}
		0 & = (\nabla^{ g_\epsilon^{hk} }_{\dot{\gamma}_0} \dot{\gamma_0})^{ g_\epsilon^{hk} } +  s \frac{d\psi_s}{ds}\Big\vert_{s=0}\left( (\nabla^{g_\epsilon^{hk}}_{\dot{\gamma}_0} \dot{\gamma_0})^{\perp_{g_\epsilon^{hk}} }  \right)  + s W + \frac{s^2}{2} R_s(P(W)) ,
	\end{align*}
	which can be written as a fixed point equation
	\begin{equation}\label{eq:Fixed Point Equation}
		W = - \frac{ (\nabla^{g_\epsilon^{hk}}_{\dot{\gamma}_0} \dot{\gamma_0})^{\perp_{g_\epsilon^{hk}} } }{s} - \frac{d\psi_s}{ds}\Big\vert_{s=0}\left( (\nabla^{g_\epsilon^{hk}}_{\dot{\gamma}_0} \dot{\gamma_0})^{\perp_{g_\epsilon^{hk}} }  \right) - \frac{s}{2} R_s(P(W)) .
	\end{equation}
	We shall solve this using Schauder's fixed point theorem. For this we must show the the map $\cF:L^2 \to L^2$ given by the right hand side of the previous equation maps a ball of radius $R$ to itself. Let us prove this using equation \ref{eq:estimate on the remainder}
	\begin{align*}
		\| \cF (W) \|_{L^2} & \lesssim \frac{1}{s} \| (\nabla^{g_\epsilon^{hk}}_{\dot{\gamma}_0} \dot{\gamma_0})^{\perp_{g_\epsilon^{hk}} } \|_{L^2} + \Big\Vert \frac{d\psi_s}{ds}\Big\vert_{s=0}\left( (\nabla^{g_\epsilon^{hk}}_{\dot{\gamma}_0} \dot{\gamma_0})^{\perp_{g_\epsilon^{hk}} }  \right) \Big\Vert_{L^2} + s \| R_s(P(W)) \|_{L^2} \\
		& \lesssim \frac{\epsilon^{\alpha + \frac{1}{2}}}{s}  + s \| P(W) \|^2_{W^{2,2}} \\
		& \lesssim \frac{\epsilon^{\alpha + \frac{1}{2}}}{s}  + s \|P\|^2 R^2 .
	\end{align*}
	We shall then chose $\epsilon$ and $s$ so that both $\frac{\epsilon^{\alpha+ \frac{1}{2}}}{s} $ and $s \|P\|^2 R^2$ are smaller than $R/2$. Using the fact that $\|P\|\sim \epsilon^{-1}$, this will be the case if $sR<\epsilon^2/2$ and $\epsilon^{\alpha+ \frac{1}{2}} < sR/2$ which can clearly be achieved for sufficiently small $s$ and $\epsilon$ because $\alpha= \frac{11-2\delta}{5} >3/2$ and so $\alpha + \frac{1}{2}> 2$.
	
	Another possibility, which guarantees the uniqueness of the fixed point, is to use the contraction mapping principle. For this, we can use equation \ref{eq:estimate on the remainder (difference)} in the Appendix \ref{appendix:geodesic equation remainder}, which yields 
	\begin{align*}
		\| \cF (W_2) - \cF(W_1) \| & \lesssim  s \| P \|^2 R  \| W_2-W_1 \| ,
	\end{align*}
	and we conclude that $\cF$ is a contraction in the ball of radius $R$ if $s\|P\|^2 R <1$ which, up to a factor of two, coincides with a condition from the previous computation.
	Now, having established the existence of a solution $W$, with $\| W \| \leq R$ to the fixed point equation \ref{eq:Fixed Point Equation}, the geodesic it gives rise to is $\gamma_s(t)=\exp_{\gamma_0(t)}(sV(t))$ with $V=P(W)$. Hence, in order to estimate how large $sV$ is, we must find the smallest value of $s\|P\|R$ for which the previous argument can be made to work. Recall that the two conditions needed are
	$$sR < \frac{\epsilon^2}{2} , \ \text{and} \ \epsilon^{\alpha+\frac{1}{2}} < \frac{sR}{2}.$$
	Recall however that $\alpha =\frac{11-2\delta}{5}$ for some $\delta \in (-1/2,0)$. Hence, $\alpha+\frac{1}{2}= \frac{27-4\delta}{10}$ which can be made larger than $14/5$ by picking $\delta \in (-1/2,-1/4)$ and so we it is enough if $sR = C\epsilon^{\frac{14}{5}}$ for some uniform constant $C>0$ not depending on $\epsilon$. Notice that this allows one to shrink $s$ as much as one wants at the expense of increasing $R$ in a proportional manner so that their product remains $C\epsilon^{\frac{14}{5}}$. Hence, we find that 
	$$\|sV\|_{W^{2,2}} = \|sP(W)\|_{W^{2,2}} \lesssim s \| P \| \|W \|_{L^2} \lesssim sR \epsilon^{-1} = C \epsilon^{\frac{9}{5}},$$
	This finishes the proof of the first part of Proposition \ref{prop:solving the geodesic equation} using $U=sV$.
	
	Next, we must prove the claim regarding the Hausdorff distance of $\gamma_s$ to $\gamma_0$. The estimates we have obtained imply that
	$$\int_{0}^{2\pi \epsilon} \left( |sV(t)|^2  + |s\partial_t V (t)|^2 + |s\partial_t^2 V(t)|^2 \right) dt \lesssim \epsilon^{\frac{18}{5}}. $$
	Reparametrizing with a parameter $u=\epsilon^{-1}t$ which is independent of $\epsilon$ and defining $\tilde{V}(u)=V(\epsilon u)$, we can write this as
	$$\int_{0}^{2\pi} \left( |s \tilde{V}(u)|^2 + \epsilon^{-2} |s \partial_u \tilde{V}(u)|^2 + \epsilon^{-4} |s \partial_u^2 \tilde{V}(u)|^2 \right) \epsilon du \lesssim \epsilon^{\frac{18}{5}}. $$
	Given that $1 \ll \epsilon^{-1}$ we find that
	$$\int_{0}^{2\pi} \left( |s \tilde{V}(u)|^2 + |s \partial_u \tilde{V}(u)|^2 + |s \partial_u^2 \tilde{V}(u)|^2 \right)  du \lesssim \epsilon^{\frac{13}{5}}. $$
	The Sobolev embedding $W^{2,2} \hookrightarrow C^{1,\beta}$, for $\beta \in (0,1/2)$ in dimension $1$, implies in particular that
	$$\| sV \|_{C^0} \lesssim \epsilon^{\frac{13}{10}}.$$
	From this, we can give an upper bound on the Hausdorff distance $d_H$ between $\gamma_0(t)$ and $\gamma_s(t)=\exp_{\gamma_0(t)}(sV(t))$. This gives,
	\begin{align*}
		d_H (\gamma_0 , \gamma_s) & = \sup_{t} \mathrm{dist} (\gamma_0, \gamma_s(t)) \\
		& = \sup_{t} \inf_{t'} \mathrm{dist} (\gamma_0(t'), \gamma_s(t))  \\
		& \leq \sup_{t} \mathrm{dist} (\gamma_0(t), \gamma_s(t)) \\
		& = \sup_{t} |sV(t)| \\
		& \lesssim \epsilon^{13/10},
	\end{align*}
	which ends the proof of the second bullet.
	
	Finally, we turn to the last statement regarding the index. For this we shall fix a coordinate system as in the beginning of this section, i.e. $\lbrace x_\mu \rbrace_{\mu=1}^4$ satisfying $|\partial_{x_\mu}|_{g_\epsilon^{hk}} \lesssim 1$ and recall from equation \ref{eq:Christoffel symbols} that for $\alpha = \frac{11-2\delta}{5}$
	$$(\Gamma^{hk}_\epsilon)^\nu_{\mu \lambda} = (\Gamma_\epsilon)^\nu_{\mu \lambda} + \epsilon^\alpha (\delta \Gamma_\epsilon)^\nu_{\mu \lambda}, 
	$$
	with all derivatives. Then, in these coordinates, we can write the Jacobi operator $J_{\gamma_s}^{g^{hk}_\epsilon}$ of the geodesic $\gamma_s(t)$ with respect to $g_\epsilon^{hk}$ using the computation carried out in the Appendix \ref{appendix:geodesic equation remainder}. This gives, for a vector field $v=v^{\nu} \partial_\nu$,
	$$J_{\gamma_s}^{g^{hk}_\epsilon}(v) = (j^{hk}_\epsilon)^{\nu}(v) \partial_\nu,$$
	where
	\begin{align*}
		(j^{hk}_\epsilon)^{\nu}(v) & = (1-(g^{hk}_\epsilon)_{\rho \sigma} \dot{\gamma}_s^\sigma \dot{\gamma}_s^\nu) \left( \ddot{v}^\nu + \partial_\lambda (\Gamma^{hk}_\epsilon)^{\nu}_{\sigma \mu} \dot{\gamma}_s^\sigma \dot{\gamma}_s^\mu v^\lambda + 2 (\Gamma^{hk}_\epsilon)^{\nu}_{\mu \lambda} \dot{\gamma}_s^\mu \dot{v}^\lambda \right) .
	\end{align*}
	Now, using the previously mentioned relation between $g^{hk}_\epsilon$ (respectively $(\Gamma^{hk}_\epsilon)^\nu_{\mu \lambda}$) and $g_\epsilon$ (respectively $(\Gamma_\epsilon)^\nu_{\mu \lambda}$), we find that
	\begin{align*}
		(j^{hk}_\epsilon)^{\nu}(v) & = (1-(g_\epsilon)_{\rho \sigma} \dot{\gamma}_s^\sigma \dot{\gamma}_s^\nu) \left( \ddot{v}^\nu + \partial_\lambda (\Gamma_\epsilon)^{\nu}_{\sigma \mu} \dot{\gamma}_s^\sigma \dot{\gamma}_s^\mu v^\lambda + 2 (\Gamma_\epsilon)^{\nu}_{\mu \lambda} \dot{\gamma}_s^\mu \dot{v}^\lambda \right) + O(\epsilon^\alpha |v|).
	\end{align*}
	Furthermore, $\gamma_s(t)=\exp_{\gamma(t)}(sV(t))$ with $|sV|=O(\epsilon^{\frac{13}{10}})$ as just proven. Hence, we find that $\gamma_s^{mu}(t)=\gamma_0^{\mu}(t) + O(\epsilon^{\frac{13}{10}})$ and inserting this into above combined with the fact that $\epsilon^\alpha < \epsilon^{\frac{13}{10}}$ yields
	\begin{align*}
		(j^{hk}_\epsilon)^{\nu}(v) & = (1-(g_\epsilon)_{\rho \sigma} \dot{\gamma}_0^\sigma \dot{\gamma}_0^\nu) \left( \ddot{v}^\nu + \partial_\lambda (\Gamma_\epsilon)^{\nu}_{\sigma \mu} \dot{\gamma}_0^\sigma \dot{\gamma}_0^\mu v^\lambda + 2 (\Gamma_\epsilon)^{\nu}_{\mu \lambda} \dot{\gamma}_0^\mu \dot{v}^\lambda \right) + O(\epsilon^{\frac{13}{10}} |v|) \\
		& = j_\epsilon(v) + O(\epsilon^{\frac{13}{10}} |v|) ,
	\end{align*}
	where $j_\epsilon(v)$ is defined by $J_{\gamma_0}(v)=(j_\epsilon)^\nu(v) \partial_\mu$. We have proved in Proposition \ref{prop:Jacobi eigenvalues 2} that the lower absolute value of an eigenvalue of $J_{\gamma_0}$ is proportional to $\epsilon$ and as $\epsilon^{\frac{13}{10}} \ll \epsilon$ the indexes of $J_{\gamma_s}^{g^{hk}_\epsilon}$ and $J_{\gamma_0}$ coincide.
	\end{proof}

	\subsection{Critical points of $h$}\label{ss:Critical points of h}
	
	The next proposition gives a lower bound on the number of critical points of $h$ for the generic position of the points $p_1, \ldots , p_n \in \mathbb{T}_\Lambda$. Recall that these critical points give rise to geodesics for $K_\epsilon^{gh}$. 
	
	\begin{proposition}\label{prop:critical points}
		Let $h$ be a solution to \ref{eq:Poisson equation} for a generic arrangement of points $p_1, \ldots , p_n$. Then, $h$ contains at least $10$ critical points of index one and $2(n+1)$ critical points of index two.  
		
		In particular, viewing $h$ as a function in the quotient $T/\mathbb{Z}_2$, it has at least $5$ critical points of index $1$ and $n+1$ critical points of index $2$.
	\end{proposition}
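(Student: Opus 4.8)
The plan is to read off the critical points of $h$ from Morse theory on $\mathbb{T}_\Lambda$, after recording three structural facts. First, the right-hand side of \ref{eq:Poisson equation} is invariant under $\tau(x)=-x$, which permutes $p_i\leftrightarrow\tau(p_i)$ and fixes each $q_j$; since the solution of the Poisson equation is unique up to an additive constant, this forces $h\circ\tau=h$, so $h$ is $\tau$-invariant. Second, away from the punctures $h$ is harmonic, so by the strong maximum principle it has no local maxima or minima: every finite critical point of $h$ has Morse index $1$ or $2$, and at a nondegenerate one the Hessian is trace-free (its trace is $\Delta h=0$) and hence has signature $(2,1)$ or $(1,2)$. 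Third, near a puncture $h$ is modelled on a constant multiple of the Newtonian kernel with the prescribed coefficient, so with the sign convention of \ref{eq:Poisson equation} the positively charged punctures---namely $p_1,\dots,p_n,\tau(p_1),\dots,\tau(p_n)$ together with any $q_j$ having $2m_j-4>0$---are poles where $h\to+\infty$, while the $q_j$ with $2m_j-4<0$ are poles where $h\to-\infty$.

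Next I would turn $h$ into an honest Morse function $\tilde h$ on the closed manifold $\mathbb{T}_\Lambda\cong T^3$ by capping each pole: a $+\infty$-pole is replaced by a single interior maximum (index $3$) and a $-\infty$-pole by a single interior minimum (index $0$), leaving all finite critical points untouched. Writing $c_i$ for the number of index-$i$ critical points of $\tilde h$, this gives $c_0=\#\{-\infty\text{-poles}\}$ and $c_3=\#\{+\infty\text{-poles}\}$, while $c_1$ and $c_2$ are exactly the numbers of index-$1$ and index-$2$ critical points of $h$ to be bounded. A standard genericity argument---parametric transversality (Sard--Smale) applied to the map $(p_1,\dots,p_n,x)\mapsto\nabla h(x)$ with $0$ as target---shows that for a generic arrangement of the $p_i$ the function $h$ is Morse, with critical points nondegenerate and disjoint from the punctures, so that $\tilde h$ is genuinely Morse.

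I would then feed this into the strong Morse inequalities on $T^3$, whose Betti numbers are $b_0=1$, $b_1=3$, $b_2=3$, $b_3=1$. At degree $1$ these give $c_1-c_0\ge b_1-b_0=2$, and the same inequality applied to $-\tilde h$ gives $c_2-c_3\ge 2$. In the regime of the examples, where every $q_j$ is a $-\infty$-pole (i.e. $m_j\le 1$), one has $c_0=8$ and the only $+\infty$-poles are the $2n$ points $p_i,\tau(p_i)$, so $c_3=2n$; the two inequalities read $c_1\ge 10$ and $c_2\ge 2n+2=2(n+1)$, proving the first assertion. For the quotient statement I would invoke $\tau$-invariance once more: in this regime each $q_j$ is a pole rather than a finite critical point, so every finite critical point of $h$ lies in a \emph{free} $\tau$-orbit $\{x,\tau(x)\}$ of two points of equal Morse index, and each such orbit descends to a single critical point of the same index in the free locus of $\mathbb{T}_\Lambda/\mathbb{Z}_2$. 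Halving the counts yields at least $5$ critical points of index $1$ and $n+1$ of index $2$ downstairs, with no orbifold Morse theory needed.

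The hard part lives at the $\tau$-fixed points $q_j$. Analytically one must ensure the pole/critical-point dichotomy and the nondegeneracy survive the genericity argument even though the $q_j$ cannot be perturbed; in particular, when $m_j=2$ the point $q_j$ is a regular but (by the $\tau$-evenness of $h$) automatically critical point, whose index must be controlled. Topologically, to obtain the index-$1$ bound $c_1\ge 10$ for arbitrary admissible data (not only $m_j\le 1$) one can no longer simply halve, since the free-orbit count loses the contributions forced to sit on the fixed locus, and $c_0$ drops below $8$; the clean count must then be replaced by a $\mathbb{Z}_2$-equivariant Morse argument that correctly bookkeeps the critical points at the fixed points against the equivariant cohomology of $(T^3,\tau)$. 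This equivariant accounting at the fixed locus, rather than the Morse inequalities themselves, is where I expect the main difficulty to lie.
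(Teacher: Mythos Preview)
Your argument is correct and follows essentially the same route as the paper: cap the poles of $h$ to obtain a Morse function $\tilde h$ on $T^3$ with $c_0=8$ and $c_3=2n$, apply the Morse inequalities, and then halve using $\tau$-invariance together with the observation that no finite critical point sits at a $q_j$. Two small differences are worth recording. First, the paper uses the full polynomial form of the strong Morse inequalities, $M_t-P_t=(1+t)Q_t$, and solves for the coefficients; this yields the slightly sharper fact that the excesses agree, $\nu_1-10=\nu_2-2(n+1)=a_1\ge 0$, whereas your degree-one inequality together with its dual under $h\mapsto -h$ gives only the two separate bounds (which is all the proposition claims). Second, you are more explicit than the paper about the implicit hypothesis that each $q_j$ is a $-\infty$-pole (equivalently $m_j\le 1$); the paper's proof simply asserts that $\tilde h$ has a minimum at each $q_j$ without flagging this, and your final paragraph correctly identifies the equivariant bookkeeping that would be needed to handle $m_j\ge 2$.
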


	\begin{proof}
	
	Let $h$ be a solution to \ref{eq:Poisson equation} for a generic arrangement of points $p_1, \ldots , p_n$. It follows from Theorem 6.3 in \cite{Morse} and its second application in the middle of page 46 in the same reference, that for the generic choice of a single point, say $p_1 \in \mathbb{T}^3$, the function $h$ is Morse. Hence, we can now use Morse theory in order to compute a lower bound on the number of critical points of $h$. We start by noticing that it is possible to modify $h$ near the points $\lbrace q_1 , \ldots , q_8 \rbrace$, and $ \lbrace p_1, \ldots , p_n , \tau(p_1) , \ldots , \tau(p_n) \rbrace$ so that it extends to a smooth function $\tilde{h}$ on $\mathbb{T}^3$ with a maximum at each $p_i$ and $\tau(p_i)$ and a minimum at each $q_i$. This can be done in a manner so as to not introduce any more critical points of $h$. 
	
	Now, let $b_i$ denote the Betti numbers of $\mathbb{T}^3$ and $P_t=\sum_l b_l t^l$ its Poincar\'e polynomial. Similarly, let $\nu_l(h)$ be the number of index $l$ critical point of $h$ and $M_t(f)=\sum_l \nu_l(f) t^l$ be its Morse function. The strong Morse inequalities are then given by
	$$M_t(h)-P_t=(1+t) Q_t(h),$$
	where $Q_t(h)= \sum_l a_l t^l$ is a polynomial all of whose coefficients are nonnegative. In our situation we have $P_t=1+3t+3t^2+t^3$ and $\nu_0=8$, $\nu_3=2n$ and inserting in the strong Morse inequalities above we find
	\begin{align*}
		& 8 + \nu_1 t + \nu_2 t^2 + 2n t^3 - (1+3t+3t^2+t^3)  = (1+t) (a_0+a_1t +a_2 t^2 +a_3 t^3 + \ldots ) \Leftrightarrow \\
		\Leftrightarrow & 7 + (\nu_1-3) t + (\nu_2 -3) t^2 + (2n-1) t^3  = a_0 + (a_0+a_1)t + (a_1+a_2) t^2 + (a_2 +a_3) t^3 + (a_3+a_4) t^4 + \ldots .
	\end{align*}
	Then, we find that
	\begin{align*}
		a_0 & = 7 \\
		a_0+a_1 & = \nu_1-3 \\
		a_1+a_2 & = \nu_2-3 \\
		a_2+a_3 & = 2n-1 \\
		a_3+a_4 & = 0.
	\end{align*}
	From the last of these we find that $a_3=0=a_4$, then from the semilast and the first we respectively have $a_2=2n-1$ and $a_0=7$. We can then insert these into the remaining equations and solve them for $\nu_1$ and $\nu_2$. The result is
	\begin{align*}
		\nu_1 & = 10+a_1 \geq 10 \\
		\nu_2 & = 2(n+1)+a_1 \geq 2(n+1),
	\end{align*}
	and we conclude that $h$ has at least $10+2(n+1)$ critical points. These will never be placed at the fixed points of the $\mathbb{Z}_2$-action as those correspond to the points $\lbrace q_1 , \ldots , q_8 \rbrace$ at which $\tilde{h}$ has local minima. Hence, as $h$ is $\mathbb{Z}_2$-invariant, down in $T/\mathbb{Z}_2$ this corresponds to $5+n+1=n+6$ critical points, $5$ of which have index one and $n+1$ having index two.
	\end{proof}

	\subsection{A particularly interesting example}\label{ss:cube}
	
	Before jumping into the relevant example we shall start with a little baby example in one lower dimension.
	
	\begin{example}\label{ex:square}
		Consider $\mathbb{T}^2=\mathbb{R}^2/\mathbb{Z}^2$ and a function $\phi$ on $\mathbb{T}^2$ which has $4$ local minima at the points of $(\mathbb{Z}/2)^2/\mathbb{Z}^2$ and $4$ local maxima at the points 
		$$p_1 = (1/4,0) , \ p_2 = (3/4,0) , \ p_3 = (0,1/4) , \ p_4=(0,3/4)  \in \mathbb{T}^2,$$
		as illustrated in figure \ref{fig:square}. If $\phi$ is a Morse function and these are its only local maxima and minima, then it has $8-\chi(T^2)=8$ critical points of index $1$. Suppose now that $\phi$ is invariant by reflection on the lines $y=x$ and $x+y=1$, then we claim that $4$ of the $8$ critical points are located along these lines. Indeed, by the corresponding reflection symmetry, along any such line $\nabla \phi$ must be parallel to it. Any such line corresponds to a circle in $\mathbb{T}^2$ and $\phi$ has two local maximums along a line as any such intersects $(\mathbb{Z}/2)^2/\mathbb{Z}^2$ in two points. Hence, $\phi$ must have two local minima in any if these lines. All together these give rise to $4$ critical points. Again, see figure \ref{fig:square} and the caption below it which explain this example in a more visual manner.
		
		\begin{figure}[h]\label{fig:square}
			\centering
			\includegraphics[width=0.65\textwidth,height=0.38\textheight]{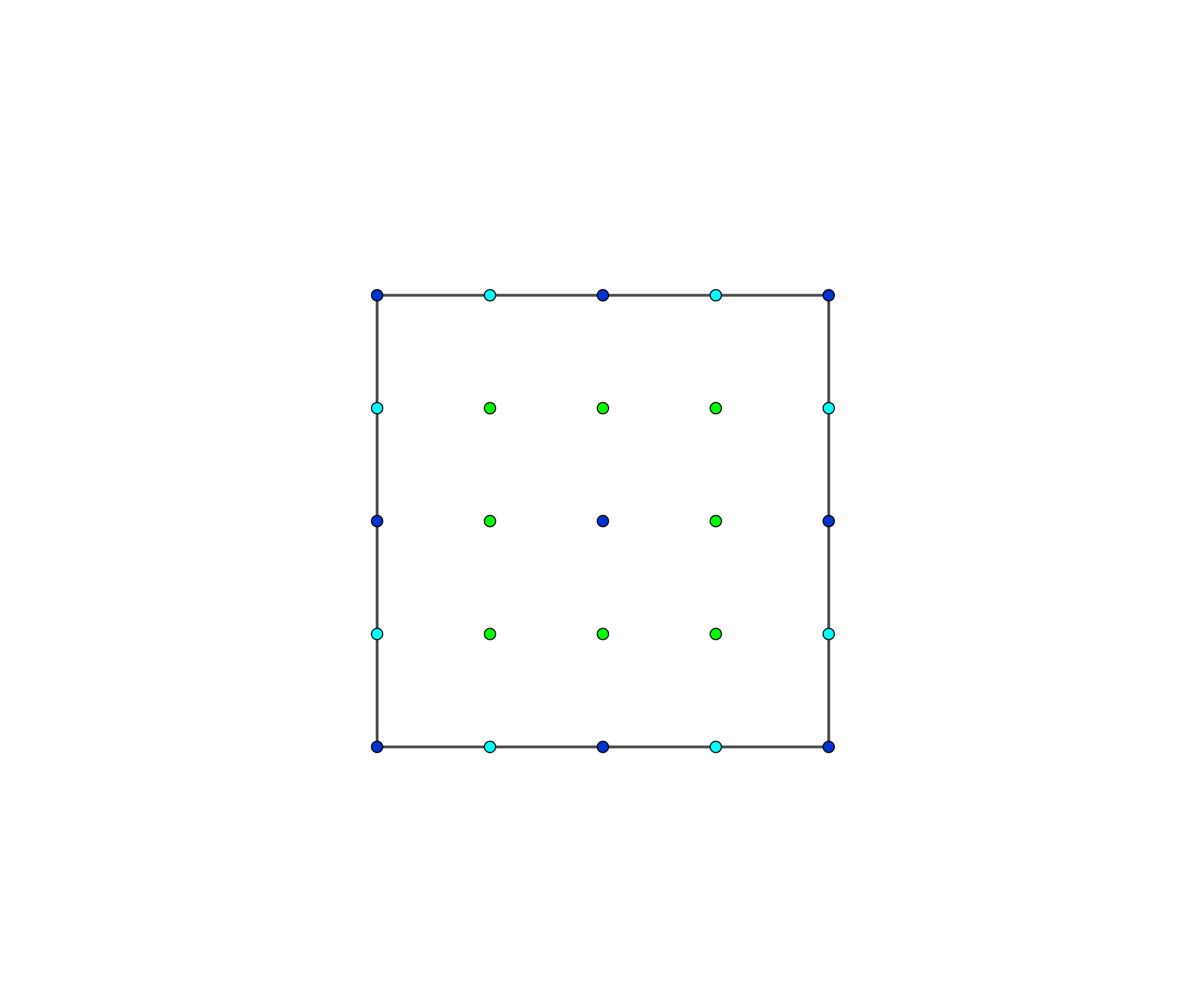}
			\caption{Fundamental domain for $\mathbb{T}^2$ with local maximuns at the blue points and local minima at the black points. The green points are the extra critical points which are local minima of $\phi$ when restricted to lines corresponding to the fixed locus of reflexion symmetries.}
		\end{figure}
	
	The remaining $4$ critical points can be obtained by a similar reasoning by reflection on the lines $y=1/4$ and $y=3/4$.
		
	\end{example}
	
	Let $\Lambda \subset \mathbb{R}^3$ be a full rank lattice and $\mathbb{T}_\Lambda=\mathbb{R}^3/\Lambda$ the corresponding $3$-torus. We shall consider the function $h$ which solves \ref{eq:Poisson equation} with $\lbrace q_1 , \ldots , q_8 \rbrace= (\Lambda/2)/\Lambda$ the fixed points of $\tau$, $n=4$, $k_1 = \ldots = k_4 =4$, and $\lbrace p_1, \ldots p_n, \tau(p_1) , \ldots , \tau(p_n) \rbrace $ chosen in the way which we shall now explain. Explicitly write the lattice $\Lambda$ as follows
	$$\Lambda= \lbrace n_1 \lambda_1 + n_2 \lambda_2 + n_3 \lambda_3 \ | \ (n_1,n_2,n_3) \in \mathbb{Z}^3 \rbrace . $$
	Then, we set
	$$p_1  = \lambda_1/4 , \ p_2  = 3\lambda_1/4 , \ p_3 = \lambda_2/4 , \ p_4 = 3\lambda_2/4 ,$$
	then
	$$-p_1  = \lambda_1/4 +\lambda_3/2 , \ - p_2  = 3\lambda_1/4 +\lambda_3/2 , \ -p_3 = \lambda_2/4 +\lambda_3/2 , \ -p_4 = 3\lambda_2/4 + \lambda_3/2 .$$
	This can be thought of as stacking two copies of example \ref{ex:square} in $\mathbb{T}_\Lambda$, see figure \ref{fig:cube} below where a fundamental domain is pictured and keep in mind that the opposite sides are being identified.
	
	\begin{figure}[h]\label{fig:cube}
		\centering
		\includegraphics[width=0.65\textwidth,height=0.35\textheight]{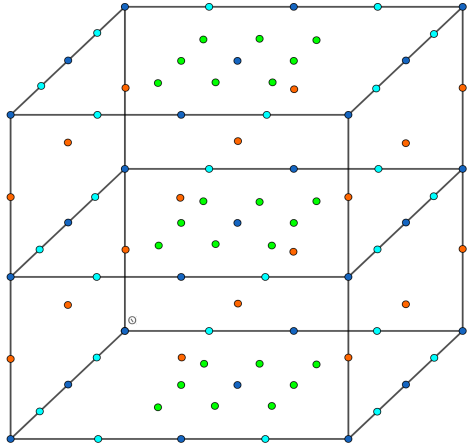}
		\caption{Fundamental domain for $\mathbb{T}_\Lambda$. The $8$ black points represent charges of magnitude $-4$ and the $8$ blue dots represent charges of magnitude $+4$. Then, the green points represent the $8$ critical points which we obtain from the two-dimensional picture and by reflection symmetry on the plane containing them. Furthermore, we have $8$ extra critical points, in orange, each of which can be obtained by a similar manner by reflection symmetry argument.}
	\end{figure}

	\section{Geodesics on Foscolo's K3 surface}\label{sec:examples}

	\subsection{The general result, Theorem \ref{thm:Main}}
	
	For convenience, we recall here the statement of Theorem \ref{thm:Main}.
	
	\begin{theorem}\label{thm:Main extended}
		Let $n \geq 1$, $k_1, \ldots , k_n, m_1, \ldots , m_8$ satisfying \ref{eq:necessary condition for Laplace equation}, a collection of ``bubbles'' $N^1, \ldots , N^n$, $M^1, \ldots , M^8$, and $p_1, \ldots , p_n \in \mathbb{T}_\Lambda$ with $p_1$ generically chosen. Then, there is a positive number $\epsilon_0 \ll 1$, such that for all $\epsilon < \epsilon_0$: 
		\begin{itemize}
			\item There are at least $n+1$ closed geodesics of index one and $5$ closed geodesics of index two in $K_\epsilon$. These can be located within a precision of $\epsilon^{13/10}$, in the Hausdorff distance, as the circle fibers above the critical points of $h$. 
		
			\item For $i=1, \ldots , n$ and if the location of $x^{i}_1$ in the ``bubble'' $N^i$ is generic, there are at least $k_i-1$ closed geodesics on $N_\epsilon^i$. In addition, there is a non-negative continuous and monotone function $\kappa: [0,\epsilon_0] \to \mathbb{R}$ with $\kappa(0)=0$ such that these geodesics are within Hausdorff distance $\epsilon \kappa(\epsilon)$ from the circle fibers above critical points of $\phi_i$. Furthermore, there is $d_{k_i} >0$ such that if $\min_{j_1 \neq j_2}|x^{i}_{j_1} - x^{i}_{j_2}| > d_{k_i}$, then these geodesics have index $1$. 
		\end{itemize}
	\end{theorem}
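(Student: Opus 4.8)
The plan is to assemble the building-block results established in the preceding sections, treating the collapsing region $K_\epsilon$ and the bubble regions $N_\epsilon^i$ separately, since they require genuinely different deformation arguments. No new analysis is needed beyond what is already in hand; the theorem is essentially a matter of matching Morse indices to geodesic indices and summing the two contributions.

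For the statement about $K_\epsilon$, I would first invoke Proposition \ref{prop:critical points}: for generic $p_1$ the harmonic function $h$, viewed on the quotient $T/\mathbb{Z}_2$, is Morse and possesses at least $5$ critical points of index one and $n+1$ of index two. Each such critical point $p$ produces a geodesic circle orbit $\gamma_0 = \pi^{-1}(p)$ for the Gibbons--Hawking metric $g_\epsilon^{gh}$. Non-degeneracy of $\Hess_p(h)$, guaranteed by the Morse property, lets me apply Proposition \ref{prop:Jacobi eigenvalues 2} to read off the index through the relation $\mathrm{index}(-J_{\gamma_0}) = 3 - \mathrm{index}\,\Hess_p(h)$; thus the $n+1$ index-two critical points of $h$ yield index-one geodesics and the $5$ index-one critical points yield index-two geodesics, matching the two counts in the statement. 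The same non-degeneracy feeds into Proposition \ref{prop:Jacobi}, which shows $J_{\gamma_0}$ is invertible with $\|J_{\gamma_0}^{-1}\| \lesssim \epsilon^{-1}$; this is precisely the hypothesis needed to run Proposition \ref{prop:solving the geodesic equation}, which deforms each $\gamma_0$ to an honest closed geodesic $\gamma$ for $g_\epsilon^{hk}$, preserving the index and placing $\gamma$ within Hausdorff distance $\lesssim \epsilon^{13/10}$ of $\gamma_0$.

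For the bubbles $N_\epsilon^i$ the argument is cleaner, because after the $\epsilon^{-2}$ rescaling these regions do not collapse. I would apply Theorem \ref{thm:GH ALF A_k} to the ALF $A_{k_i-1}$ instanton $N^i$: for a generic arrangement of the singularities $x_1^i, \ldots, x_{k_i}^i$ of $\phi_i$, there are at least $k_i - 1$ non-degenerate geodesic orbits, arising from at least $k_i-1$ index-two critical points of $\phi_i$. Under the separation hypothesis $\min_{j_1 \neq j_2}|x_{j_1}^i - x_{j_2}^i| > d_{k_i}$, Remark \ref{rem:rescaled statement} (the rescaled form of Proposition \ref{prop:Jacobi eigenvalues}) gives $\mathrm{index}(-J_{\gamma_0}) = 3 - \mathrm{index}\,\Hess(\phi_i)$, so these become index-one geodesics. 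Since $\epsilon^{-2}g_\epsilon^{hk}|_{N_\epsilon^i}$ converges smoothly to $N^i$, White's deformation theorem (Theorem 3.2 of \cite{White}) deforms each non-degenerate geodesic to one on $N_\epsilon^i$ of the same index; undoing the $\epsilon$-rescaling converts White's vanishing deformation bound, measured in the rescaled metric, into a distance of the form $\epsilon\kappa(\epsilon)$ in $g_\epsilon^{hk}$, with $\kappa$ continuous, monotone, and $\kappa(0)=0$.

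The genuine obstacle is concentrated entirely in the first bullet, in the deformation across the collapsing region. White's theorem does not apply there, because the metric is degenerating and the Jacobi operator itself becomes singular as $\epsilon \to 0$; the estimate $\|J_{\gamma_0}^{-1}\| \lesssim \epsilon^{-1}$ of Proposition \ref{prop:Jacobi}, together with the fact that the error term $\nabla^{g_\epsilon^{hk}}_{\dot\gamma_0}\dot\gamma_0$ is of order $\epsilon^{\alpha+1/2}$ with $\alpha > 3/2$, is exactly what makes the fixed-point argument of Proposition \ref{prop:solving the geodesic equation} close. Once that Proposition and Theorem \ref{thm:GH ALF A_k} are in hand, the present theorem follows by bookkeeping: reading off the indices via Propositions \ref{prop:Jacobi eigenvalues} and \ref{prop:Jacobi eigenvalues 2} and recording the location estimates.
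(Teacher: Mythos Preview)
Your proposal is correct and follows essentially the same route as the paper's proof: Proposition \ref{prop:critical points} supplies the critical points of $h$, Proposition \ref{prop:Jacobi eigenvalues 2} converts their Morse indices to geodesic indices, and Proposition \ref{prop:solving the geodesic equation} handles the deformation in $K_\epsilon$; for the bubbles $N_\epsilon^i$ you correctly invoke Theorem \ref{thm:GH ALF A_k} together with Remark \ref{rem:rescaled statement 0} and White's theorem. If anything you are slightly more explicit than the paper in spelling out the index conversion and the role of Proposition \ref{prop:Jacobi}, but the logical skeleton is identical.
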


	\begin{proof}
	The proof of the first part follows from deforming the non-degenerate geodesics on $K_\epsilon^{gh}$. The existence of these, with the required index, for the approximate metric $g^{gh}_\epsilon$ follows from Proposition \ref{prop:critical points} which guarantees the existence of at least $n+1$ index one and $5$ index two closed geodesics on this region. By Proposition \ref{prop:solving the geodesic equation}, these can then be deformed to geodesics for the metric $g_\epsilon^{hk}$ within Hausdorff distance $\epsilon^{13/10}$ of the original geodesics for $g^{gh}_\epsilon$, which recall, are located above the critical points of $h$.
	
	The second part of the theorem follows from applying White's deformation theorem as in section \ref{sec:deform geodesics concentrated}. Indeed, for each $i=1, \ldots , n$, the restriction of $\epsilon^{-2}g_\epsilon^{hk}$ to $N_\epsilon^i$ converges to an ALF gravitational instanton of type $A_{k_i-1}$ which we denote by $N_0^i$. Under the hypothesis that $x^{i}_1$ is generic and $|x^{i}_{j_1} - x^{i}_{j_2}| > d_{k_i}$, we can apply Theorem \ref{thm:GH ALF A_k} (see also remark \ref{rem:rescaled statement 0}), which ensures the existence of at least $k_i-1$ nondegenerate closed geodesics of index one. These persist as closed geodesics of index one for $g^{hk}_\epsilon$ by White's deformation theorem \cite{White}.
	\end{proof}

	\subsection{Examples}

	Let us see a few examples when all the $m_j$'s vanish. In this situation, the condition in equation \ref{eq:necessary condition for Laplace equation} reads
	$$\sum_{i=1}^n k_i = 16,$$
	and we shall now explore a few examples from different partitions of $16$.
	
	\begin{example}\label{ex:all k=1}
		If all $k_i=1$ then we must have $n=16$ points and so, by Theorem \ref{thm:Main extended}, there are at least $5$ closed geodesics of index two and $17$ of index one in the region $K_\epsilon$.
		
		Alternatively, we can carry out the arguments used to prove Theorem \ref{thm:Main extended} in more extent. For this, we use Proposition \ref{prop:critical points}, which guarantees that for the generic location of one of the singularities $h$ has at least $22$ non-degenerate critical points. Of these $5$ have index one and $17$ have index two and by Proposition \ref{prop:Jacobi eigenvalues} this results in $17$ closed geodesics of index one and $5$ closed geodesics of index two for $g_\epsilon$. Then, it follows from Proposition \ref{prop:solving the geodesic equation} that such geodesics deform to closed geodesics for the hyperK\"ahler metric $g_\epsilon^{hk}$. Furthermore, each $N^i_0$ space that is being glued in is a Taub-Nut space and it admits no closed geodesics, see example B.1 in the Appendix B of \cite{Lotay}. 
		
		Furthermore, around each of the $8$ points $q_j$, the metric $\epsilon^{-2} g_\epsilon^{hk}$ approximates an ALF $D_0$-manifold, namely the Atiyah--Hitchin manifold. It then follows from the discussion in section \ref{ss:geodesics on AH} that in each $M^{\epsilon}_j$  there are at least $3$ closed geodesics which given an extra $24$ closed geodesics on $X_\epsilon$. Thus, bringing the total to $22+24=46$ closed geodesics. Notice however that the index of these last $24$ geodesics is unknown.
	\end{example} 

	\begin{example}
		If all $k_i=2$, then $n=8$ and by Theorem \ref{thm:Main extended} we find that in $K_\epsilon$ there are $5$ closed geodesics of index two and $9$ of index one. Again, we can alternatively start arguing by applying Proposition \ref{prop:critical points} for the generic location of the singularities, which yields $14$ critical points of $h$. Of these, $5$ have index one and $9$ index two. Resulting, by Proposition \ref{prop:Jacobi eigenvalues}, in closed geodesics for $g_\epsilon$, $5$ of which have index two and $9$ have index one. Again, by Proposition \ref{prop:solving the geodesic equation}, these can be deformed to closed geodesics with respect to the hyperK\"ahler metric $g_\epsilon^{hk}$ in $K_\epsilon$.
		
		Notice however that, in this case $\epsilon^{-2} g_\epsilon^{hk}$ approximates an ALF $A_1$ manifold around each point $p_i$. As explained in example \ref{ex:A_1}, each $N^i_\epsilon$ admits at least $3$ closed geodesics with $2$ having index one and $1$ having index two. Thus, resulting in a total of $24$ extra closed geodesics $8$ of each index one, two, and three. Adding to the geodesics we had previously found in $K_\epsilon$ we have so far counted $8+9=17$ closed geodesics of index one, $8+5=13$ of index two, and $8$ of index three.
		
		Furthermore, just like in the previous example, around each of the $8$ points $q_j$, $\epsilon^{-2} g_\epsilon^{hk}$ converges to an ALF $D_0$-manifold, which gives rise to extra $8\times 3=24$ closed geodesics on $X_\epsilon$ of unknown index. Thus, in total, we find $38+24=62$ closed geodesics. 
	\end{example}
	 
	\begin{example}
		Let us now consider a different example where we have $n_1$ singularities with $k_i=2$ and $n_2$ singularities with $k_i$ equal to $3$. Such $n_1$ and $n_2$ must satisfy $32 =2 \sum_{i=1}^{n_1} 2 + 2 \sum_{i=1}^{n_2} 3$, i.e.
		$$16= 2n_1 + 3n_2.$$
		
		The only possible solution to this equation for $(n_1,n_2) \in \mathbb{N}_0 \times \mathbb{N}$ is with $n_1=2$ and $n_2=4$. Then, $h$ has singularities at $p_1,p_2$ with charge $2$ and at $p_3, \ldots , p_6$ with charge $3$. Around these points $\epsilon^{-2} g_\epsilon^{hk}$ converges to $A_1$ and $A_2$ ALF gravitational instanton respectively. The later of these ALF gravitational instantons can be chosen so that they are constructed from the Gibbons Hawking ansatz with $3$ singularities placed at the vertices of an equilateral triangle as in Proposition \ref{prop:triangle} and example \ref{ex:A_2}. Then, there are $4$ non-degenerate closed geodesics ($3$ of index one and $1$ of index two) in each of the $A_2$ spaces and a totally geodesic ellipsoid on the $A_1$ ones. Using a similar reasoning to that of the previous example, these give rise to a total of $3 n_1 + 4 n_2=6+16=22$ closed geodesics on $\bigcup_{i=1}^n N^i_\epsilon$ with $2+4\times 3=14$ having index one, $2+4\times 1=6$ index two, and $2$ index three. 
		
		Together with the geodesics that we previously found in $K_\epsilon$, we obtain $22+n+6=22+6+6=34$ closed geodesics on $K_\epsilon \cup \bigcup_{i=1}^n N^i_\epsilon$. Of these, there are $14+7=21$ of index one, $6+5=11$ of index two, and $2$ having index three.
		
		Furthermore, as in all other examples so far, adding the $8$ ALF instantons of type $D_0$ yields an extra $8\times 3= 24$ closed geodesics, resulting in a total of $34+24=58$.
	\end{example}

	\begin{example}
		Now, we consider the following partition $9+3+2+2$ of $16$, then $n=4$ and the hyperK\"ahler metric has $5$ closed geodesics of index one and $5$ closed geodesics of index two in $K_\epsilon$. 
		
		Furthermore, we consider the ALF gravitational instantons of type $A_8$ and $A_2$ of examples \ref{ex:A_8} and \ref{ex:A_2} respectively. The first of these contributes $12$ closed geodesics of index one and $4$ of index two, while the second contributes $3$ closed geodesics of index one, and $1$ of index two. Overall, in the region $\bigcup_{i=1}^4 N^i_\epsilon$ we obtain $12+3+2\times 1=17$ closed geodesics of index one, $4+1+2\times 1=7$ of index two, and $2\times 1$ of index three.
		
		All together, in $K_\epsilon \cup \bigcup_{i=1}^4 N^i_\epsilon$ we find $5+17=22$ closed geodesics of index one, $5+7=12$ of index two, and $2$ of index three.
		
		Finally, together with the $24$ closed geodesics from $\bigcup_{j=1}^8 M^i_\epsilon$, we have found $24+12+24=60$ closed geodesics.
	\end{example}

	\begin{example}
	In this example we consider instead the partition $9+3+3+1$, where again $n=4$ and so the hyperK\"ahler metric also has $5$ closed geodesics of index one and $5$ closed geodesics of index two in the large $K_\epsilon$ region. 
	
	Using the same $A_8$  and $A_2$ spaces as in the previous example, we find that, in the region $\bigcup_{i=1}^4 N^i_\epsilon$, we obtain $12+3 \times 2=18$ closed geodesics of index one and $4+2\times 1=6$ closed geodesics of index two.
	
	All together, in $K_\epsilon \cup \bigcup_{i=1}^4 N^i_\epsilon$ we find $5+18=23$ closed geodesics of index one, and $5+6=11$ closed geodesics of index two.
	
	Together with the $24$ closed geodesics from $\bigcup_{j=1}^8 M^i_\epsilon$, we have found a total of $23+11+24=58$ closed geodesics.
	\end{example}
	
	\begin{example}
		Finally, we consider the situation which lies at the other extreme of example \ref{ex:all k=1}. This is when $n=1$ in which case $h$ has $7$ critical points and $\epsilon^{-2} g_\epsilon^{hk}$ approximates an ALF $A_{15}$-manifold which, in the generic situation, always has at least $14$ closed geodesics. Adding these to the previous $7$ located in $K_\epsilon$ we obtain a total of $22$ closed geodesics.
	\end{example}

	\appendix

	\section{Deforming the geodesic equation}\label{appendix:geodesic equation remainder}
	
	Consider a family of curves $\gamma_s(t)=\exp_{\gamma_0(t)}(sV(t))$ which in a local coordinate system $(x^1, \ldots , x^n)$ can be written as $t \mapsto (\gamma_s^1(t) , \ldots , \gamma_s^n(t) ) $. The geodesic equation is
	\begin{align*}
		f_s(V) & = (\nabla_{\dot{\gamma}_s} \dot{\gamma}_s )^{\perp} = \underbrace{\nabla_{\dot{\gamma}_s} \dot{\gamma}_s}_{I_1(s)} - \underbrace{ \langle  \nabla_{\dot{\gamma}_s} \dot{\gamma}_s , \frac{\dot{\gamma}_s}{|\dot{\gamma}_s|} \rangle \frac{\dot{\gamma}_s}{|\dot{\gamma}_s|} }_{I_2(s)},
	\end{align*}
	and we can Taylor expand this in $s$ as
	\begin{equation}\label{eq:Appendix Taylor expansion of the equation}
		f_s(V) = f_0(V) + s \frac{d}{ds} \Big\vert_{s=0} f_s(V) + \frac{s^2}{2} R_s(V),
	\end{equation}
	where $|R_u(V)| = | \frac{1}{u^2} \int_0^u (u-s)^2 \partial_s^2 f_s(V) ds | \lesssim \sup_{s \in (0,u)} |\partial_s^2 f_s(V)|$. Now, in order to compute $\partial_s\vert_{s=0} f_s(V) =I_1'(0)+I_2'(0)$ and $\partial_s^2 f_s(V)=I_1''(s)+I_2''(s)$, we independently compute $I_1'(0)$, $I_2'(0)$ and $I_1''(s)$, $I_2''(s)$. For this, it is convenient to write the equation in local coordinates. The $\nu$-components of $I_1$ and $I_2$ are given by
	\begin{align*}
		I_1^{\nu}(s) & = \ddot{\gamma}_s^\nu + \Gamma^{\nu}_{\mu \lambda} (\gamma_s(t)) \dot{\gamma}_s^\mu \dot{\gamma}_s^\lambda , \\
		I_2^{\nu}(s) & = \frac{\dot{\gamma}_s^\nu g_{\rho \sigma}(\gamma_s(t)) \dot{\gamma}_s^\sigma I_1^\rho(s)}{g_{\rho \sigma}(\gamma_s(t)) \dot{\gamma}_s^\rho \dot{\gamma}_s^\sigma}  .
	\end{align*}
	With no loss of generality we can assume that $\gamma(t)$ is a geodesic parametrized by arclenght, i.e. $I_1(0)=0$. Furthermore, now we shall choose a very special set of coordinates. First we fix a coordinate $x_1$ along $\gamma_0(t_0-\epsilon, t_0+\epsilon )$ so that $x_1 \mapsto \exp(x_1 \dot{\gamma_0(t_0)}) = \gamma_0(t_0+x_1)$. Then, we fix an orthormal basis $\lbrace e_2 , \ldots , e_n \rbrace$ to the normal space to $\dot{\gamma_0}(t_0)$ at $\gamma_0(t_0)$ and which we parallel transport along $\gamma_0(t_0-\epsilon,t_0+\epsilon)$. Then, we consider the coordinate system
	$$(x_1, \ldots , x_n) \mapsto \exp_{\gamma_0(t_0+x_1)}(x_2 e_2 + \ldots +x_n e_n).$$
	In these coordinates we can write the vector fields $V(t)$ as $v_2(t) e_2 + \ldots + v_n(t) e_n$ and so the curves $\gamma_s(t)=\exp_{\gamma_0(t)}(sV(t))$ are given in these coordinates by
	$$t \mapsto (\gamma_s^1(t) , \ldots , \gamma_s^n(t) )=(t-t_0, sv_2(t) , \ldots , sv_n(t)).$$
	Then, we find that
	\begin{align*}
		(I_1^{\nu})'(0) & = \ddot{v}^\nu + \partial_\lambda \Gamma^{\nu}_{\sigma \mu} \dot{\gamma}^\sigma \dot{\gamma}^\mu v^\lambda + 2 \Gamma^{\nu}_{\mu \lambda} \dot{\gamma}^\mu \dot{v}^\lambda ,
	\end{align*}
	where $\dot{\gamma}^\sigma:=\dot{\gamma_0}^\sigma=\delta^{\sigma 1}$ but which we shall leave as $\dot{\gamma}^\sigma$ in order to more easily read the equations which we will find.
	
	The equation above can be written in a more friendly manner by noticing that along a geodesic, we have
	\begin{align*}
		\left( \frac{D^2V}{dt^2} \right)^\nu & = \ddot{v}^\nu+ 2 \Gamma^{\nu}_{\mu \lambda} \dot{\gamma}^\mu \dot{v}^\lambda + (\partial_\sigma \Gamma^\nu_{\mu \lambda} + \Gamma^{\nu}_{\mu \rho} \Gamma^{\rho}_{\sigma \lambda} -  \Gamma^{\nu}_{\rho \lambda}  \Gamma^{\rho}_{\mu \sigma} ) \dot{\gamma}^\sigma \dot{\gamma}^\mu v^\lambda ,
	\end{align*}
	and upon inserting above yields
	\begin{align*}
		(I_1^{\nu})'(0) & = \left( \frac{D^2V}{dt^2} \right)^\nu  + (  \partial_\lambda \Gamma^{\nu}_{\sigma \mu} - \partial_\sigma \Gamma^\nu_{\mu \lambda} +  \Gamma^{\nu}_{\rho \lambda}  \Gamma^{\rho}_{\mu \sigma} - \Gamma^{\nu}_{\mu \rho} \Gamma^{\rho}_{\sigma \lambda} ) \dot{\gamma}^\sigma \dot{\gamma}^\mu v^\lambda \\
		& =  \left( \frac{D^2V}{dt^2} \right)^\nu + R^\nu_{\mu \lambda \sigma} \dot{\gamma}^\sigma \dot{\gamma}^\mu v^\lambda .
	\end{align*}
	On the other hand, the second derivatives are given by
	\begin{align*}
		(I_1^{\nu})''(s) & = 2 v^\rho \partial_\rho \Gamma^{\nu}_{\mu \lambda} (\gamma_s(t)) \dot{\gamma}^\mu \dot{v}^\lambda + \frac{1}{2} v^\rho v^\sigma \partial^2_{\rho \sigma} \Gamma^\nu_{\mu \lambda}(\gamma_s(t)) \dot{\gamma}^\mu \dot{\gamma}^\lambda,
	\end{align*}
	which satisfies
	\begin{equation}\label{eq:bound on I_1'(0)}
		\sup_{s \in (0,u)}|(I_1^{\nu})''(s)| \lesssim |V|^2 + |\nabla V|^2 .
	\end{equation}
	We now turn to the computations of $I_2'(0)$ and $I_2''(s)$. Regarding the first of these, we have
	\begin{align*}
		|\gamma_s(t)|_g^2 (I_2^\nu)'(s) & = \left( v^\lambda \partial_\lambda g_{\rho \sigma}(\gamma_s(t)) \dot{\gamma}_s^\sigma \dot{\gamma}_s^\nu  + \dot{v}^\nu g_{\rho \sigma}(\gamma_s(t)) \dot{\gamma}_s^\sigma + \dot{\gamma}_s^\nu g_{\rho \sigma}(\gamma_s(t)) \dot{v}^\sigma   \right) I_1^\rho(s) \\
		& - \frac{1}{|\gamma_s(t)|_g^2}\frac{d}{ds} \left( g_{\rho \sigma}(\gamma_s(t)) \dot{\gamma}_s^\rho \dot{\gamma}_s^\sigma \right) \dot{\gamma}_s^\nu g_{\rho \sigma} \dot{\gamma}_s^\sigma    I_1^\rho(s) \\
		& + g_{\rho \sigma}(\gamma_s(t)) \dot{\gamma}_s^\sigma \dot{\gamma}_s^\nu (I_1^\rho)'(s) ,
	\end{align*}
	where
	$$\frac{d}{ds} \left( g_{\rho \sigma}(\gamma_s(t)) \dot{\gamma}_s^\rho \dot{\gamma}_s^\sigma \right) = v^\lambda \partial_\lambda g_{\rho \sigma}(\gamma_s(t)) \dot{\gamma_s}^\rho \dot{\gamma_s}^\sigma + g_{\rho \sigma} (\gamma_s(t)) (\dot{v}^\rho \dot{\gamma_s}^\sigma + \dot{\gamma_s}^\rho \dot{v}^\sigma), $$
	and using the fact that $I_1(0)=0$, we have
	\begin{align*}
		(I_2^\nu)'(0)  
		& = g_{\rho \sigma} \dot{\gamma}^\sigma \dot{\gamma}^\nu (I_1^\rho)'(0) .
	\end{align*}
	Next, we compute the second derivatives as follows
	\begin{align*}
		|\dot{\gamma}_s|^2(I_2^\nu)''(s) & = -\partial_s (|\dot{\gamma}_s|^2)(I_2^\nu)'(s) + \partial_s \left( |\dot{\gamma}_s|^2 (I_2^\nu)'(s) \right),
	\end{align*}
	and the second of these terms is given by
	\begin{align*}
		\partial_s \left( |\dot{\gamma}_s|^2 (I_2^\nu)'(s) \right) & = \left[ v^{\lambda_1} v^{\lambda_2} \partial^2_{\lambda_1 \lambda_2} g_{\rho \sigma}(\gamma_s(t)) \dot{\gamma}_s^\sigma \dot{\gamma}_s^\nu + v^\lambda \partial_\lambda g_{\rho \sigma}(\gamma_s(t)) \dot{v}^\sigma \dot{\gamma}_s^\nu + v^\lambda \partial_\lambda g_{\rho \sigma}(\gamma_s(t)) \dot{\gamma}_s^\sigma \dot{v}^\nu \right] I_1^\rho(s) \\
		& + \left[ v^\nu v^\lambda \partial_\lambda g_{\rho \sigma}(\gamma_s(t)) \dot{\gamma}_s^\sigma + 2 \dot{v}^\nu g_{\rho \sigma}(\gamma_s(t)) \dot{v}^\sigma + \dot{\gamma}_s^\nu v^\lambda \partial_\lambda g_{\rho \sigma}(\gamma_s(t)) \dot{v}^\sigma  \right] I_1^\rho(s) \\
		& + \left( v^\lambda \partial_\lambda g_{\rho \sigma}(\gamma_s(t)) \dot{\gamma}_s^\sigma \dot{\gamma}_s^\nu  + \dot{v}^\nu g_{\rho \sigma}(\gamma_s(t)) \dot{\gamma}_s^\sigma + \dot{\gamma}_s^\nu g_{\rho \sigma}(\gamma_s(t)) \dot{v}^\sigma   \right) (I_1^\rho)'(s) \\
		& + \frac{1}{|\gamma_s(t)|_g^2} \left[ \frac{1}{|\gamma_s(t)|_g^2} \left( \frac{d}{ds} \left( g_{\rho \sigma}(\gamma_s(t)) \dot{\gamma}_s^\rho \dot{\gamma}_s^\sigma \right) \right)^2  - \frac{d^2}{ds^2} \left( g_{\rho \sigma}(\gamma_s(t)) \dot{\gamma}_s^\rho \dot{\gamma}_s^\sigma \right) \right] \dot{\gamma}_s^\nu g_{\rho \sigma} \dot{\gamma}_s^\sigma    I_1^\rho(s) \\
		& - \frac{\frac{d}{ds} \left( g_{\rho \sigma}(\gamma_s(t)) \dot{\gamma}_s^\rho \dot{\gamma}_s^\sigma \right)}{|\gamma_s(t)|_g^2} \left[ ( \dot{v}^\nu g_{\rho \sigma} \dot{\gamma}_s^\sigma  + \dot{\gamma}_s^\nu v^\lambda \partial_\lambda g_{\rho \sigma} \dot{\gamma}_s^\sigma + \dot{\gamma}_s^\nu g_{\rho \sigma} \dot{v}^\sigma  ) I_1^\rho(s) + \dot{\gamma}_s^\nu g_{\rho \sigma} \dot{\gamma}_s^\sigma    (I_1^\rho)'(s) \right] \\
		& + \left( v^\lambda \partial_\lambda g_{\rho \sigma}(\gamma_s(t)) \dot{\gamma}_s^\sigma \dot{\gamma}_s^\nu + g_{\rho \sigma}(\gamma_s(t)) \dot{v}^\sigma \dot{\gamma}_s^\nu + g_{\rho \sigma}(\gamma_s(t)) \dot{\gamma}_s^\sigma \dot{v}^\nu \right) (I_1^\rho)'(s) \\
		& + g_{\rho \sigma}(\gamma_s(t)) \dot{\gamma}_s^\sigma \dot{\gamma}_s^\nu (I_1^\rho)''(s).
	\end{align*}
%
%
	Again, it follows from this computation, from the bound in \ref{eq:bound on I_1'(0)}, the facts that $\sup_{s \in (0,u)} |I_1(s)| \lesssim 1$, and $\sup_{s \in (0,u)} |I_1'(s)| \lesssim |\nabla V| + |V|$ that
	\begin{equation}\label{eq:bound on I_2'(0)}
		\sup_{s \in (0,u)} |(I_2^\nu)''(s)| \lesssim |V|^2 + |\nabla V|^2 + |\nabla^2 V|^2.
	\end{equation}
	Putting together the bounds obtained in \ref{eq:bound on I_1'(0)} and \ref{eq:bound on I_2'(0)} we discover that the remainder term in \ref{eq:Appendix Taylor expansion of the equation} satisfies
	$$|R_s(V)| \lesssim |V|^2 +  |\nabla V|^2 + |\nabla^2 V|^2 . $$
	Furthermore, integrating we find that
	\begin{equation}\label{eq:estimate on the remainder}
		\|R_s(V) \|_{L^2} \lesssim \|V\|_{W^{2,2}}^2.
	\end{equation}
	Next, we shall explain how to obtain the following estimate
	\begin{equation}\label{eq:estimate on the remainder (difference)}
		\| R_s(W_2) - R_s(W_1) \|_{L^2} \lesssim  \|W_1 + W_2\|_{W^{2,2}} \|W_2 - W_1 \|_{W^{2,2}}.
	\end{equation}
	To achieve this we write $R_s(V) = \frac{1}{s^2} \int_0^s (s-s')^2 \partial_{s'}^2 f_{s'}(V) ds'$ and compute
	\begin{align*}
		|R_s(W_2) - R_s(W_1)| & = \frac{1}{s^2} \Big\vert \int_0^s (s-s')^2 \left( \partial_{s'}^2 f_{s'}(W_2) - \partial_{s'}^2 f_{s'}(W_1) \right) ds' \Big\vert \\
		& = \frac{1}{s^2} \Big\vert \int_0^s (s-s')^2 \left( \partial_{s'}^2 \int_0^1 \partial_u f_{s'}(W_1 + u (W_2-W_1) ) du \right) ds' \Big\vert \\
		& \lesssim \Big\vert \int_0^1  \partial_u \int_0^s  \partial_{s'}^2  f_{s'}(W_1 + u (W_2-W_1) )  ds' du \Big\vert \\
		& = \Big\vert \int_0^1  \partial_u  \left[ \partial_{s'}  f_{s'}(W_1 + u (W_2-W_1) )  \right]_{s'=0}^{s'=s} du \Big\vert \\
		& \lesssim \sup_{s' \in [0,s]} \sup_{u \in [0,1]} \Big\vert  \partial_u \partial_{s'}  f_{s'}(W_1 + u (W_2-W_1) ) \Big\vert .
	\end{align*}
	We can now repeat the previous computation with $\gamma_s(t)=\exp_{\gamma_0(t)}(sW_1(t) + su (W_2(t)-W_1(t)) )$ using similar local coordinates. This immediately yields the bound in equation \ref{eq:estimate on the remainder (difference)}.

		\section{Maxwell's problem}
		
		It is an interesting question to ask for which configurations of points can the number of geodesics which we produce be maximized or minimized. Recall that in our case such geodesics come from critical points of $\phi$ which is the electric potential generated by a collection of point charges in either $\mathbb{R}^3$ or $\mathbb{T}_\Lambda$. In this manner, the critical points of $\phi$ corresponds to the location where the electric field $E=\nabla \phi$ vanishes.
		
		In fact, the question of what is the maximum number of critical points already goes back to Maxwell in his 1873 ``Treatise on Electricity and Magnetism'' \cite{Maxwell}. In it, Maxwell conjectures that the maxmimum number critical points is at most $(n-1)^2$. Surprisingly, very little research has been carried out in this direction, see however the very interesting article \cite{Gabrielov} where the authors compute upper bounds on the number of electrostatic points. Other relevant literature on the topic can be found in \cite{Killian,Tsai2} and references therein.

\end{document}